\documentclass{amsart}

\usepackage{amsmath}
\usepackage{amssymb}
\usepackage{amscd}
\usepackage{hyperref}


\newtheorem{theorem}{Theorem}[section]
\newtheorem{thm}[theorem]{Theorem}
\newtheorem{cor}[theorem]{Corollary}
\newtheorem{lem}[theorem]{Lemma}
\newtheorem{prop}[theorem]{Proposition}


\theoremstyle{definition}
\newtheorem{defn}[theorem]{Definition}

\newtheorem{rem}[theorem]{Remark}
\newtheorem{constr}[theorem]{Construction}

\newtheorem{conj}[theorem]{Conjecture}


\theoremstyle{remark}




\newcommand{\mbb}{\mathbb}
\newcommand{\QQ}{\mbb{Q}}

\newcommand{\CC}{\mbb{C}}
\newcommand{\RR}{\mbb{R}}

\newcommand{\PP}{\mbb{P}}

\newcommand{\FF}{\mbb{F}}

\newcommand{\mc}{\mathcal}

\newcommand{\OO}{\mc{O}}



\newsavebox{\sembox}
\newlength{\semwidth}
\newlength{\boxwidth}

\newsavebox{\semrbox}
\newlength{\semrwidth}
\newlength{\boxrwidth}


\newcommand{\GWpp}{\langle [pt], [pt], \ldots\rangle_{0, \beta}^{X}}





\title
{Symplectic geometry and rationally connected $4$-folds}

\author[Tian]{Zhiyu Tian}
\address{
Department of Mathematics 253-37\\
California Institute of Technology \\ 
Pasadena, CA, 91125}
\email{tian@caltech.edu}

\date{\today}

\begin{document}


\begin{abstract}
We study some symplectic geometric aspects of rationally connected $4$-folds. As a corollary, we prove that any smooth projective $4$-fold symplectic deformation equivalent to a Fano $4$-fold of pseudo-index at least $2$ or a rationally connected $4$-fold whose second Betti number is $2$ is rationally connected. 
\end{abstract}


\maketitle


\section{Introduction}
This paper is devoted to the study of some symplectic geometric aspects of rationally connected $4$-folds. The key motivation is the following conjecture.

\begin{conj}[Koll{\'a}r, \cite{KollarUni}]\label{conj:Kollar}
Let $X$ and $Y$ be two smooth projective varieties which are symplectic deformation equivalent. Then $X$ is rationally connected if and only if $Y$ is.
\end{conj}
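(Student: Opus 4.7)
The plan is to attack the conjecture through Gromov--Witten theory, since GW invariants are the paradigmatic symplectic deformation invariant of smooth projective varieties. In full generality the conjecture is open, so I will aim for the two restricted settings advertised in the abstract (Fano $4$-folds of pseudo-index at least $2$, and rationally connected $4$-folds with $b_2=2$), where one can hope to control the geometry of rational curves tightly enough to convert a symplectic statement into a birational one.

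First I would collect the ``easy'' direction of the symplectic invariance. If $X$ is rationally connected, then by Koll\'ar--Miyaoka--Mori there exist very free rational curves, and for suitable classes $\beta\in H_2(X,\ZZ)$ the Gromov--Witten invariants $\GWpp$ with enough point insertions are nonzero (this is the enumerative shadow of the existence of a rational curve through any prescribed collection of points). Since $Y$ is symplectic deformation equivalent to $X$, the identical invariants on $Y$ are nonzero. So on $Y$ we automatically obtain genus-$0$ stable maps whose image passes through arbitrarily many chosen points. The entire problem is therefore to upgrade the existence of these stable maps into the existence of an \emph{irreducible} rational curve connecting two general points of $Y$.

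The second step is to analyze the possible degenerations of such stable maps. A priori the domain of a nonzero GW contribution could be a tree of $\PP^1$'s with each component carrying only a small portion of the class $\beta$, and this is exactly what prevents one from deducing rational connectedness for free. Here I would bring in the hypotheses: the pseudo-index assumption forces $-K_Y\cdot C\geq 2$ for every rational curve $C$, which bounds the number of components any such tree may have in terms of the total anti-canonical degree, and it also guarantees that components of low degree are free. In the $b_2=2$ setting, the Mori cone on $Y$ has only two extremal rays, and any splitting of $\beta$ must respect that two-dimensional combinatorics, again drastically restricting the shape of the tree. Together with bend-and-break and dimension counts on $\Kbmoo{0,n}{\beta}$, these constraints should pin the GW contribution down to configurations in which at least one irreducible component dominates $Y$, or in which the chain of rational curves is itself a chain connecting two general points --- both situations that imply rational connectedness of $Y$.

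The hard part, and the core technical content I expect to occupy most of the work, is the ``boundary analysis'' in the previous paragraph: ruling out the possibility that the nonzero GW number on $Y$ is entirely supported on strata of highly reducible maps whose image misses a general point, or whose irreducible components fail to be covering. This requires a fine understanding of the boundary $\partial\Kbmoo{0,n}{\beta}$, of which boundary strata can carry a nonzero virtual contribution given the point constraints, and of how the pseudo-index or $b_2=2$ hypothesis propagates through the recursion on degree. Once one shows that such pathological strata either do not exist or cannot contribute to the relevant invariant, the existence of a free, and then a very free, rational curve on $Y$ follows, and hence $Y$ is rationally connected; the converse direction is symmetric.
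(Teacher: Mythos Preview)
Your proposal inverts the hard and easy steps, and this is a genuine gap rather than a reordering.

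The implication ``nonzero Gromov--Witten invariant $\langle [pt],[pt],\ldots\rangle^Y_{0,\beta}\neq 0$ $\Rightarrow$ $Y$ is rationally connected'' is immediate: a nonzero invariant forces the moduli space of stable maps through two general points to be nonempty, and the image of any such stable map is a connected chain of rational curves joining those two points. For smooth projective varieties this is exactly rational connectedness. No boundary analysis, no irreducibility argument, no bend-and-break on $Y$ is required. So what you call ``the entire problem'' is in fact one line.

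Conversely, the step you label ``easy'' --- that a rationally connected $X$ carries a nonzero invariant $\langle [pt],[pt],\ldots\rangle^X_{0,\beta}$ --- is precisely what is \emph{not} known in general and is the substance of the paper. The existence of very free curves does not by itself produce a nonzero Gromov--Witten number: the moduli space may have excess dimension, and reducible configurations may contribute with signs or via virtual classes you cannot control. The paper's Theorem~\ref{thm:main} establishes exactly this nonvanishing for Fano $4$-folds of pseudo-index $\geq 2$, and the work (Sections~\ref{sec:2} and the proof of Theorem~\ref{thm:main}) consists of finding a very free curve of low anti-canonical degree and then doing the delicate boundary analysis \emph{on $X$}, where the pseudo-index hypothesis is available, to show the relevant invariant is enumerative and positive.

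Relatedly, you repeatedly invoke the pseudo-index bound and the two-ray Mori cone on $Y$. These are algebro-geometric features of $X$; they are not symplectic deformation invariants and you have no right to use them on $Y$. The paper never does: all the curve-by-curve arguments happen on $X$, and only the resulting numerical GW invariant is transported to $Y$.

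Finally, for the $b_2=2$ case the paper does \emph{not} follow the GW-with-two-points route at all (it explicitly says it cannot). Instead it runs the MMP on a putative non-rationally-connected $Y$, shows the resulting fibration structure is detected symplectically (via extremal-ray GW invariants and a Hard Lefschetz argument), matches it with a fibration on $X$, and derives a contradiction from a symplectic Graber--Harris--Starr type statement. Your outline does not anticipate this second, structurally different, argument.
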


Here two symplectic manifolds are symplectic deformation equivalent if there is a family of smooth symplectic manifolds $\{(V_t, \omega_t), 0 \leq t \leq 1\}$ that connects the two given ones. This conjecture is known to be true in the second Betti number $1$ case (\cite{KollarUni}), in dimension $1, 2$ (which follow easily from classification), and $3$ (\cite{VoisinRC}, \cite{SymRC}). 

One of the main results of this paper is the following.

\begin{thm}\label{thm:SymRC}
Let $Y$ be a smooth projective fourfold. Assume $Y$ is symplectic deformation equivalent a smooth $4$-fold $X$ which is either
\begin{enumerate}
\item a Fano $4$-fold of pseudo-index at least $2$, or
\item a rationally connected $4$-fold whose second Betti number is $2$.
\end{enumerate}
Then $Y$ is rationally connected.
\end{thm}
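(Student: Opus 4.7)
The strategy is to use the symplectic deformation invariance of Gromov--Witten invariants, continuing the line of attack of \cite{VoisinRC} and \cite{SymRC}. The key principle is: if a smooth projective variety $Z$ admits a non-vanishing genus zero Gromov--Witten invariant with at least two point insertions, $\GWpp \neq 0$, then $Z$ is rationally connected. Indeed, such non-vanishing forces, for any two generic points $p, q \in Z$, a genus zero stable map with the first two marked points mapping to $p$ and $q$, whose rational components produce a connected chain of rational curves joining them. The plan is therefore to establish the non-vanishing of such an invariant on the reference $X$ in each of the two cases; since the invariant is a symplectic deformation invariant, it persists on $Y$, and the algebraic argument above then shows $Y$ is rationally connected.

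For Case (1), a Fano $4$-fold of pseudo-index at least two, I would use the theory of minimal dominating families of rational curves (Koll\'ar--Miyaoka--Mori, Cho--Miyaoka--Shepherd-Barron, Kebekus). The pseudo-index hypothesis provides a covering family of rational curves of small anticanonical degree, and the geometry of the variety of minimal rational tangents at a general point allows one to concatenate two such curves through two chosen generic points. The main technical step is to upgrade this algebraic chain construction to a non-vanishing virtual invariant: choose the curve class $\beta$ (likely twice a minimal class) and the insertions, plus auxiliary incidence classes that cut down the chain components, so that the relevant component of $\overline{\mathcal{M}}_{0,n}(X,\beta)$ is enumerative or at least carries a non-trivial virtual contribution that can be isolated from the boundary.

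For Case (2), the hypothesis $b_2(X) = 2$ permits a case analysis via the Mori program. The Mori cone of $X$ is spanned by two extremal rays, each determining a contraction of fiber, divisorial, or small type. When one of the contractions is of fiber type with fibers of dimension at most three, the three-fold result \cite{SymRC} applies fiberwise, and a section or multisection argument assembles the fiberwise two-pointed invariants into a non-vanishing two-pointed Gromov--Witten invariant on $X$. The remaining possibilities, namely two divisorial contractions or a divisorial/small pair, are handled by the explicit classification of extremal contractions on smooth rationally connected $4$-folds of Picard rank two, in each case using the contraction loci to furnish curves that realize the required invariant, possibly after first reducing to Case (1) when the pseudo-index is already at least two.

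The main obstacle in both cases is the passage from algebraic existence of two-pointed rational chains to non-vanishing of a virtual Gromov--Witten number. In Case (1), minimal rational curves in dimension four need not be unobstructed, so the virtual class can differ from the naive fundamental class of the moduli space and excess contributions from boundary strata could in principle cancel the expected count; the choice of $\beta$ and of the insertions must be adapted to the obstruction bundle in order to avoid such cancellations. In Case (2), the delicate point is to rule out numerical coincidences in the case analysis at divisorial or small contractions, and to ensure that the relative construction in the fiber-type case yields a non-vanishing absolute invariant on $X$, rather than collapsing through the push-forward.
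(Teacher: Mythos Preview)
Your treatment of Case~(1) is in the right spirit: the paper does prove a non-zero two-pointed genus-zero Gromov--Witten invariant on any Fano $4$-fold of pseudo-index $\geq 2$ (Theorem~\ref{thm:main}), and then transfers it to $Y$. The technical route differs from what you sketch, however. The paper does not use VMRT theory or the Cho--Miyaoka--Shepherd-Barron/Kebekus circle of ideas, and the curve class is not simply ``twice a minimal class''. Instead, in Picard number~$1$ the Koll\'ar--Miyaoka--Mori quotient construction produces a very free curve of $-K_X$-degree at most $8$ (or $9$ when the minimal degree is $3$), and then a delicate bend-and-break analysis (Section~\ref{sec:2}) shows that for such a class every reducible curve meeting two general points and the auxiliary divisorial constraints smooths to an irreducible very free curve, so the invariant is enumerative and positive. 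In higher Picard number the paper runs Mori contractions to either reduce to a $\PP^1$-bundle over a Fano $3$-fold, or to manufacture a very free curve of degree $\leq 8$ by gluing free curves in fibers of two different contractions. Your worry about obstruction bundles and boundary cancellations is exactly the issue, and the paper resolves it not by computing a virtual class but by proving that the relevant moduli components have the expected dimension.

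Your approach to Case~(2) has a genuine gap. The paper states explicitly that in the $b_2=2$ case it is \emph{unable} to produce a non-zero Gromov--Witten invariant with two point insertions on $X$. In particular, the ``section or multisection argument'' you propose, assembling fiberwise two-pointed invariants into an absolute two-pointed invariant on $X$, is precisely what does not go through. The paper instead argues by contradiction on $Y$: assuming $Y$ is not rationally connected, one runs MMP on $Y$ and shows (Lemmas~\ref{lem:structure}, \ref{lem:finestructure}) that $Y$ admits a fiber-type extremal contraction $Y\to B$ with $\dim B=2$ realizing the MRC quotient. A topological argument using Hard Lefschetz (Lemma~\ref{lem:extremalray}) then forces $X$ to have a fiber-type contraction over a surface along the \emph{same} extremal ray in $H_2$. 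Finally, a result from \cite{SymGHS} supplies a non-zero \emph{one}-pointed invariant on $X$ in a class \emph{outside} that extremal ray; transferred to $Y$, this forces $B$ to be uniruled, contradicting that $B$ is the MRC base. So the key idea you are missing is to abandon the search for a two-pointed invariant and instead match the extremal ray structure of $X$ and $Y$ symplectically, then use a one-pointed invariant transverse to the shared ray.
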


Recall that the pseudo-index $i(X)$ of a smooth projective Fano variety is defined to be the minimum of the set $\{-K_X \cdot C \vert ~C \text{~is a rational curve in}~X.\}$.

The two cases are treated differently. First of all, the Fano case follows from the following theorem.

\begin{thm}\label{thm:main}
Let $X$ be a smooth projective Fano $4$-fold of pseudo-index at least $2$. Then there is a non-zero Gromov-Witten invariant of the form $\langle [pt], [pt], \ldots \rangle^X_{0, \beta}$.
\end{thm}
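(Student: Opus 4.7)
The plan is to exploit the rational connectedness of Fano manifolds to produce a very free rational curve of a controlled class $\beta$ on $X$, and then extract a non-zero two-point Gromov-Witten invariant with auxiliary insertions via a virtual-dimension count.

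Step 1. By Koll\'ar-Miyaoka-Mori, $X$ is rationally connected, so admits a very free rational curve $g\colon \PP^{1} \to X$. Such a curve satisfies $g^{\ast} T_{X} \cong \bigoplus_{i=1}^{4} \mcO_{\PP^{1}}(a_{i})$ with each $a_{i} \geq 1$, hence $-K_{X} \cdot g_{\ast}[\PP^{1}] \geq 4$. Fix also a minimal dominating family $\mcH$ of rational curves on $X$, with class $\beta_{0}$ and $d_{0} := -K_{X} \cdot \beta_{0} \geq 2$ by the pseudo-index hypothesis.

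Step 2. Attach $r$ members of $\mcH$ to $g$ as teeth of a comb; since a general member of a minimal dominating family is free through a general point, Koll\'ar's comb-smoothing theorem applies and yields, after smoothing, a very free rational curve of class $\beta := g_{\ast}[\PP^{1}] + r\beta_{0}$ with $-K_{X} \cdot \beta = -K_{X} \cdot g_{\ast}[\PP^{1}] + r d_{0}$. Since $d_{0} \geq 2$, varying $r$ allows me to arrange $-K_{X} \cdot \beta$ to take any sufficiently large integer value in the appropriate residue class modulo $d_{0}$.

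Step 3. Choose $m \geq 2$ and effective auxiliary cohomology classes $\gamma_{3}, \ldots, \gamma_{m} \in H^{\ast}(X; \QQ)$ (for example products of ample divisors) so that the codimension-matching equation
\[
\sum_{j=3}^{m} \mathrm{codim}(\gamma_{j}) \;=\; -K_{X} \cdot \beta + m - 5
\]
holds; this is the condition for $\langle [\text{pt}], [\text{pt}], \gamma_{3}, \ldots, \gamma_{m}\rangle^{X}_{0, \beta}$ to be a well-defined integer. Any $-K_{X} \cdot \beta \geq 5$ admits such a choice, and Step 2 supplies such a $\beta$.

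Step 4. For non-vanishing: the component $M$ of $\overline{\mcM}_{0, m}(X, \beta)$ containing the smoothed very free curve is smooth of the expected dimension, since free curves are unobstructed; moreover, the composition of the evaluation map $M \to X^{m}$ with projection onto the first two factors is dominant, by very freeness. Capping the effective cycle $[\text{pt}]^{2} \times \prod_{j \geq 3} \gamma_{j}$ against $[M]$ then yields a positive contribution to the Gromov-Witten invariant.

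The main obstacle is to rule out cancellations from other components of $\overline{\mcM}_{0, m}(X, \beta)$, whose virtual classes need not equal their fundamental classes. I would address this by taking all $\gamma_{j}$ to be effective representatives, so that each component contributes non-negatively to the virtual intersection, with $M$ contributing strictly positively; alternatively, one can attempt to choose $\beta$ so that the moduli space has a unique component of maximal dimension, namely the one containing the very free curves.
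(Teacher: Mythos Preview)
Your Step 4 contains a genuine gap that is, in fact, the entire difficulty of the theorem. The assertion that ``taking all $\gamma_{j}$ to be effective representatives, so that each component contributes non-negatively to the virtual intersection'' is not true in general: the virtual fundamental class of a component whose dimension exceeds the expected dimension is \emph{not} its ordinary fundamental class, and its pairing with effective cycles can perfectly well be negative. Positivity of Gromov--Witten invariants against effective insertions is only guaranteed when the moduli space is of the expected dimension at every point meeting the constraints; you have verified this only on the single component $M$ containing your very free curve. Your alternative---choose $\beta$ so that there is a unique component of maximal dimension---is precisely the heart of the matter, and you have not indicated how to do it.

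The paper's proof is organized around exactly this obstacle, and proceeds in the opposite direction from your Step 2. Rather than enlarging $-K_X\cdot\beta$ by attaching teeth, the paper works hard to produce a very free curve of \emph{small} anti-canonical degree, namely $-K_X\cdot C\le 8$ (Theorem~\ref{thm:lowdegree}). The point of the degree bound, together with the pseudo-index hypothesis $i(X)\ge 2$, is that any stable map in the class $[C]$ passing through two very general points has at most three or four irreducible components, each of controlled degree; one can then show by bend-and-break and a case analysis (Lemmas~\ref{lem:smoothing}, \ref{lem:smoothable}, \ref{lem:balance}) that every such reducible curve lies in a component whose general member is an irreducible free curve, hence of expected dimension. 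This makes the invariant enumerative and forces it to be positive. Producing the low-degree very free curve in the first place uses the Koll\'ar--Miyaoka--Mori quotient construction when the Picard number is $1$, and the classification of extremal contractions (Lemmas~\ref{lem:3}, \ref{lem:2}) together with Theorem~\ref{thm:conic} when it is larger. None of this structure is visible in your outline, and without it the argument does not close.
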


Indeed, Gromov-Witten invariants are symplectic deformation invariants. Thus by Theorem \ref{thm:main} there is a non-zero Gromov-Witten invariant of the form $\langle [pt], [pt], \ldots \rangle^Y_{0, \beta}$ on $Y$. Then the moduli space of rational curves containing $2$ points is non-empty otherwise the Gromov-Witten invariant is $0$. Therefore $Y$ is rationally connected.

The proof of Theorem \ref{thm:main} naturally divides into two different cases. When the Fano variety has Picard number $1$, then the quotient construction of Koll{\'a}r-Miyaoka-Mori (\cite{KMM92RCFano}, \cite{KMM92RC}) gives a low degree very free rational curve (i.e. a rational curve on which the pull back of the tangent bundle of $X$ is ample). Then an easy argument involving bend-and-break allows one to understand the boundary of the moduli space. When the variety has higher Picard number, we use Mori contractions to understand the geometry of the variety. Then we could either reduce this problem to lower dimensional cases or construct low degree very free curves. 

Our technique also gives the following theorem.
\begin{thm}[=Theorem \ref{thm:BalancedCurve}]
Let $Y$ be a Fano $4$-fold and $f: C \to Y$ be an embedded rational curve. Assume that the normal bundle of $C$ is isomorphic to $\OO(2) \oplus \OO(2) \oplus \OO(2)$. Then there is a non-zero Gromov-Witten invariant of the form $\langle [pt], [pt], [pt] \rangle^Y_{0, C}$ or $\langle [pt], [pt] \rangle^Y_{0, \tilde{C}}$.
\end{thm}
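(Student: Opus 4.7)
The plan is to study the irreducible component of $\ol{M}_{0,3}(Y, C)$ containing $(f, q_1, q_2, q_3)$: either its evaluation map directly produces a nonzero $3$-point invariant, or a bend-and-break degeneration produces a class $\tilde{C}$ that carries a nonzero $2$-point invariant.

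First I would record that $f^*T_Y \cong T_{\PP^1} \oplus N_f \cong \OO(2)^{\oplus 4}$, so $-K_Y \cdot C = 8$. Since $H^1(f^*T_Y) = 0$, the map $f$ is unobstructed and $(f, q_1, q_2, q_3)$ is a smooth point of $\ol{M}_{0,3}(Y, C)$ lying on a unique irreducible component $\mcM^\circ$ of the expected dimension $12 = \dim Y^3$. At this point the tangent map of the evaluation $\mathrm{ev}\colon \mcM^\circ \to Y^3$ has kernel $H^0(f^*T_Y(-q_1-q_2-q_3)) \cong H^0(\OO(-1)^{\oplus 4}) = 0$, so $\mathrm{ev}$ is \'etale there, hence generically \'etale and dominant on $\mcM^\circ$ of some positive degree $d$. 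The same vanishing $H^1(f^*T_Y) = 0$ guarantees that the virtual fundamental class agrees with the ordinary fundamental class along $\mcM^\circ$.

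If no other component of $\ol{M}_{0,3}(Y, C)$ contributes to the invariant, then $\langle [pt], [pt], [pt] \rangle^Y_{0, C} = d > 0$ and the first alternative holds. Otherwise, assume this invariant vanishes. I would fix two general points $p_1, p_2 \in Y$; the locus in $\mcM^\circ$ sending two of the three marked points to $p_1, p_2$ is nonempty of expected dimension $3$ (by \'etaleness of $\mathrm{ev}$). Taking a general one-parameter subfamily and applying Mori's bend-and-break lemma, the family must degenerate to a reducible stable map $\tilde{C} \cup D \to Y$ with $[\tilde{C}] + [D] = [C]$. By choice of subfamily, both $p_1$ and $p_2$ can be arranged to lie on one irreducible component, which we label $\tilde{C}$.

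To conclude $\langle [pt], [pt] \rangle^Y_{0, \tilde{C}} \neq 0$, I would apply the same \'etaleness strategy on $\tilde{C}$: its normal bundle in $Y$, obtained from the standard normal-bundle sequence of the degeneration together with the balanced $N_f = \OO(2)^{\oplus 3}$, should retain enough positivity that the $2$-point evaluation map on the main component of $\ol{M}_{0,2}(Y, \tilde{C})$ is generically \'etale onto $Y^2$. The hardest step will be this last one: carefully tracking how the splitting of $N_f$ behaves across the degeneration to verify that $N_{\tilde{C}/Y}$ has the required positivity, and simultaneously ruling out any virtual cancellation in the $2$-point count on $\tilde{C}$ that would arise from boundary strata of $\ol{M}_{0,2}(Y, \tilde{C})$.
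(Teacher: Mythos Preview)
Your opening paragraph is fine: the main component $\mcM^\circ$ is smooth of expected dimension and the evaluation to $Y^3$ is \'etale there, so that component contributes a positive integer. The trouble begins when you assume the $3$-point invariant vanishes and try to manufacture $\tilde{C}$ by bend-and-break.

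First, bend-and-break does not give what you claim. Fixing two general points $p_1,p_2$ and degenerating a one-parameter family of irreducible curves through them, the limiting reducible curve will typically have $p_1$ and $p_2$ on \emph{different} irreducible components; there is no mechanism to force them onto a single component, and your assertion ``by choice of subfamily, both $p_1$ and $p_2$ can be arranged to lie on one irreducible component'' is unjustified. Second, even if you had such a component $\tilde{C}$, your plan to ``track how the splitting of $N_f$ behaves across the degeneration'' gives no control over the normal bundle of $\tilde C$ in $Y$, and in any case knowing that the main component of $\ol{M}_{0,2}(Y,\tilde C)$ has the right dimension does not prevent cancellation from boundary strata---that is exactly the problem you started with, one degree lower.

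The paper proceeds quite differently. It does not assume the $3$-point invariant vanishes; instead it classifies, up to the smoothing Lemmas~\ref{lem:smoothing} and~\ref{lem:smoothable}, all reducible stable maps in class $[C]$ that can pass through three general points. Only three combinatorial types survive (components of $-K_Y$-degrees $2{+}2{+}2{+}2$, $2{+}2{+}3{+}1$, $2{+}2{+}2{+}1{+}1$). Each is analyzed directly: either the configuration lies in a component of expected dimension (so it cannot spoil enumerativity), or a further bend-and-break on the ``bad'' non-free component produces a degree-$1$ curve $D$ in a divisor connecting two of the free degree-$2$ curves $C_1,C_2$. The class $\tilde C=[C_1+D+C_2]$ has $-K_Y\cdot\tilde C=5$, and for \emph{this specific} class one can check by hand that every $2$-pointed stable map through two general points sits in a component of expected dimension, so $\langle[pt],[pt]\rangle^Y_{0,\tilde C}>0$. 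The point is that $\tilde C$ is not produced by an abstract degeneration of $C$, but emerges from the explicit boundary analysis, and its low degree is what makes the $2$-point invariant tractable.
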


\begin{rem}\label{rem:balance}
Let $X$ be a Fano $4$-fold of Picard number $1$ and $C$ be a free curve of minimal degree. Thus $-K_X \cdot C$ is $2, 3, 4$ or $5$. Then the quotient construction of Koll{\'a}r-Miyaoka-Mori gives an embedded very free curve of $-K_X$ degree $8$ except when $-K_X \cdot C=3, 5$. The question is whether or not the normal bundle of a general such curve is balanced. Given the results in \cite{ShenMM}, it is reasonable to expect that they are actually balanced. It is also easy to see that when $-K_X \cdot C=4$, the normal bundle of the very free curve is indeed balanced. If $-K_X \cdot C=5$, the curve $C$ is very free with balanced normal bundle $\OO(1) \oplus \OO(1) \oplus \OO(1)$.
\end{rem}

The $b_2(X)=2$ case is different in that we are unable to prove the existence of a non-zero Gromov-Witten invariant with two point insertions. Instead we prove a more precise result about uniruled $4$-fold whose second Betti number is $2$. 

\begin{thm}\label{thm:KollarConj}
Let $W$ and $W'$ be two smooth projective uniruled $4$-fold which are symplectic deformation equivalent and has second Betti number $2$. Then the MRC fibration of $W$ and $W'$ has the same dimension and the class of a general fiber has the same cohomology class.
\end{thm}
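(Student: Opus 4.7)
The plan is to show that the MRC fiber class and dimension of $W$ are determined by two symplectic deformation invariants: the graded cohomology ring $H^*(W,\mathbb{Z})$ and the one-dimensional subspace $V_W\subset H_2(W,\mathbb{Q})$ spanned by classes of rational curves.

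First, we reduce to the case where both $W$ and $W'$ have Picard rank two and neither is rationally connected. If $W$ is rationally connected, Theorem~\ref{thm:SymRC}(2) gives that $W'$ is also rationally connected, and both MRC fibrations are trivial with fiber class $[W]=1\in H^0$. If $\rho(W)=1$, then $W$ uniruled forces $-K_W$ ample, so $W$ is Fano; but Fano varieties satisfy $\rho=b_2$ (since Kodaira vanishing gives $h^{2,0}=0$), contradicting $b_2(W)=2$. Hence $\rho(W)=2$, and similarly $\rho(W')=2$. Let $\pi\colon W\dashrightarrow B$ be the MRC fibration with general fiber $F$ of dimension $d\in\{1,2,3\}$ over the non-uniruled base $B$. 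Because $B$ is not uniruled, every rational curve on $W$ is contracted by $\pi$, so $V_W\subset\ker(\pi_*\colon H_2(W,\mathbb{Q})\to H_2(B,\mathbb{Q}))$; the exact sequence $N_1(F)\to N_1(W)\to N_1(B)\to 0$ together with $\rho(W)=2$, $\rho(B)\ge 1$, and $V_W\ne 0$ forces $\rho(B)=1$ and $\dim V_W=1$, and because $F$ is rationally connected, $V_W$ is precisely the full span of classes of rational curves on $W$.

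Second, I argue that $V_W$ is a symplectic invariant. On the uniruled variety $W$, every free rational curve of class $\beta$ produces a non-zero one-point Gromov--Witten invariant $\langle[pt]\rangle^W_{0,\beta}$, so the classes in $V_W$ all carry non-vanishing genus zero Gromov--Witten invariants; conversely, any such class decomposes as a non-negative sum of classes represented by pseudo-holomorphic spheres on $W'$, which by the uniruledness of $W'$ and bend-and-break arguments in the spirit of Theorem~\ref{thm:main} can be realized by honest rational curves. Therefore the lattice isomorphism $H_2(W,\mathbb{Z})\cong H_2(W',\mathbb{Z})$ carries $V_W$ onto $V_{W'}$.

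Third, I recover $(d,[F])$ from $(V_W,H^*(W,\mathbb{Z}))$. Let $L\in H^2(W,\mathbb{Z})$ be the primitive integer class generating the perpendicular subspace $V_W^\perp$ with positive pairing against a K\"ahler class; after resolving the indeterminacy of $\pi$, $L$ is identified with the pullback $\pi^*H_B$ of the ample generator of $N^1(B)$. The powers $L^k=\pi^*H_B^k$ are non-zero precisely when $k\le\dim B=4-d$, and $L^{\dim B}$ is a positive integer multiple of $[F]\in H^{2(4-d)}(W,\mathbb{Z})$. Consequently $d$ is the unique integer in $\{1,2,3\}$ with $L^{4-d}\ne 0=L^{5-d}$, and $[F]$ is the primitive integer class in the ray $\mathbb{Q}_{\ge 0}L^{4-d}$. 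Because $(V_W,H^*(W,\mathbb{Z}))$ matches $(V_{W'},H^*(W',\mathbb{Z}))$ under the symplectic deformation isomorphism, the same recipe produces $d_{W'}=d_W$ and $[F_{W'}]=[F_W]$.

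The main obstacle lies in the second step: passing from a non-vanishing Gromov--Witten invariant on $W'$ to the existence of an actual algebraic rational curve representing the class. One direction (free rational curves giving non-zero invariants) is routine, but the converse requires controlling the degenerations of the stable-map moduli space and exploiting the uniruled structure of $W'$ together with the extremal-ray geometry of the Mori cone, parallel to the arguments employed in the proof of Theorem~\ref{thm:main}.
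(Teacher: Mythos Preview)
Your argument has a genuine gap in the definition and symplectic invariance of $V_W$. The assertion ``because $B$ is not uniruled, every rational curve on $W$ is contracted by $\pi$'' is false: non-uniruled means rational curves do not \emph{cover} $B$, not that $B$ contains none. A smooth quintic threefold $B\subset\PP^4$ has $\rho(B)=1$, is Calabi--Yau (hence not uniruled), yet contains $2875$ lines; taking $W=\PP(\mcE)$ for any rank-$2$ bundle $\mcE$ on $B$ gives a uniruled, non--rationally-connected $4$-fold with $b_2=2$ whose space of rational curve classes is all of $H_2(W,\QQ)$. This is precisely the $\dim B=3$ case permitted by Lemma~\ref{lem:structure}. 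With $V_W$ two-dimensional, $V_W^\perp=0$ and your class $L$ in Step~3 does not exist. (When $\dim B\le 2$ the proof of Lemma~\ref{lem:structure} forces $B$ to be a curve of positive genus or a fake projective plane, and your outline would go through; but you cannot separate the cases without already knowing $d$.)

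A secondary issue: it is not true that every free rational curve class $\beta$ yields a non-zero $\langle[pt]\rangle^W_{0,\beta}$; free curves contribute positively, but other components of the moduli space can cancel. The Koll\'ar--Ruan theorem only guarantees this for a \emph{minimal} free class. Conversely, the step you flag as the ``main obstacle'' is in fact trivial: for the integrable complex structure on $W'$, a genus-zero pseudo-holomorphic stable map already \emph{is} a tree of algebraic rational curves.

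The paper sidesteps the whole difficulty by never trying to isolate a line in $H_2$. Instead it uses that the Fano fiber $F$ (of dimension $\le 3$) admits a non-zero two-point invariant $\langle[pt],[pt],\gamma_1,\dots,\gamma_n\rangle^F_{0,\beta}$ with $\gamma_i$ restrictions of ambient classes $\Gamma_i$; the linear functional $\alpha\mapsto\langle[pt],\alpha,\Gamma_1,\dots,\Gamma_n\rangle^W_{0,i_*\beta}$ on $H^*(W,\QQ)$ is then shown to equal a positive multiple of $\alpha\mapsto\alpha\cdot[F]$, which recovers both the dimension and the class of $F$ directly from symplectic-deformation-invariant data.
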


The idea of the proof is motivated by symplectic birational geometry (c.f. \cite{HLR}) which is the analogue of birational geometry in the symplectic category. This program in particular predicts that symplectic deformation equivalent varieties should have similar behaviors in birational geometry. One interesting question is whether the minimal model program (MMP) for a variety is something symplectic. Perhaps the first step is to study the behavior of extremal rays under symplectic deformations. 

\begin{thm}\label{thm:extremal}
Let $R$ be an extremal ray of a smooth projective $4$-fold $X$. Assume one of the followings:
\begin{enumerate}
\item The extremal contraction does not contract a divisor to a point.
\item The contraction contracts a divisor to a point. And the divisor is isomorphic to $\PP^3$ with normal bundle $\OO_{\PP^3}(-a), a=1, 2, 3$, or a smooth quadric hypersurface $Q$ in $\PP^4$ with normal bundle $\OO_{Q}(-a), a=1, 2$, or a del Pezzo threefold with at worst rational singularities or isolated singularites.
\end{enumerate} Then there is a curve class $[\alpha]$, spanning $R$, such that there is a non-zero Gromov-Witten invariant $\langle A_1, \ldots, A_n \rangle^X_{0, [\alpha]}$.
\end{thm}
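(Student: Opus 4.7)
The plan is a case analysis based on the Mori-theoretic type of the extremal contraction $\phi_R \colon X \to Y$ associated with $R$; in each case I would exhibit a curve class $[\alpha]$ spanning $R$ and insertions $A_1, \ldots, A_n$ whose GW invariant can be reduced to a non-zero enumerative count.

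In case (1), where $\phi_R$ does not contract a divisor to a point, I would split into three subcases. For a fiber-type contraction, a general fiber $F$ has dimension at most $3$ and is Fano, so rationally connected by the classification in low dimension together with \cite{VoisinRC, SymRC} and Theorem \ref{thm:main}; in particular there is a non-zero GW invariant on $F$ with point insertions. Taking $[\alpha]$ to be the corresponding fiber-supported class and choosing insertions on $X$ whose restrictions to a general fiber are point classes (for example, point classes of $X$ together with Poincar\'e duals of subvarieties of $Y$ pulled back via $\phi_R$), one shows that stable maps to $X$ of class $[\alpha]$ factor through a fiber, whence the $X$-invariant reduces to the $F$-invariant. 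For a divisorial contraction with positive-dimensional center $Z$, the general fiber of $\phi_R|_D$ is either $\PP^1$ (when $\dim Z = 2$) or a rationally connected surface (when $\dim Z = 1$), so taking $[\alpha]$ the class of a minimal fiber-curve and inserting $\phi_R^*$ of a point class of $Z$ (together with additional insertions if the virtual dimension demands) yields a GW invariant computing the number of such curves through a generic geometric configuration, which is $1$. For a small contraction, known results on small extremal contractions in dimension $4$ give a contracted locus consisting of smooth rational curves with controlled normal bundle, and the GW invariant with class $[\ell]$ and insertions dictated by the normal-bundle computation is non-zero.

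In case (2), where $D$ is contracted to a point, each listed $D$ is covered by lines (resp.\ by rational curves of bounded degree in the del Pezzo case). For a line $\ell \subset D$, the normal bundle in $X$ fits in
\begin{equation*}
0 \to N_{\ell/D} \to N_{\ell/X} \to N_{D/X}|_\ell \to 0.
\end{equation*}
When $D = \PP^3$ with $N_{D/X} = \OO_{\PP^3}(-a)$ one has $N_{\ell/D} = \OO(1)^{\oplus 2}$ and $N_{D/X}|_\ell = \OO(-a)$, and the quadric and del Pezzo cases are analogous. I would take $[\alpha] = [\ell]$ and match codimensions with insertions $A_i$ on $X$. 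Since any rational curve of class $[\alpha]$ must be contracted by $\phi_R$ and hence lies in $D$, the $X$-invariant reduces to a twisted enumerative count of lines on $D$ satisfying incidence conditions, which is non-zero by the classical projective geometry of $\PP^3$, $Q^3 \subset \PP^4$, or the relevant del Pezzo threefold. For the singular del Pezzo cases I would pass to a small resolution or general deformation of $D$ and invoke deformation invariance of GW invariants.

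The main obstacle will be case (2) when $a$ is large enough that $N_{\ell/X}$ acquires a negative summand: then the moduli space of stable maps with class $[\alpha]$ is obstructed, so one cannot merely count geometrically. The plan there is to identify the obstruction bundle with $R^1 \pi_* f^* N_{D/X}$ over the (unobstructed) moduli space of lines on $D$, and evaluate the resulting twisted GW invariant as a top-Chern-class integral that is non-zero because the underlying projective geometry is. Handling del Pezzo threefolds with rational or isolated singularities requires additional care to ensure the chosen rational curves deform in $X$ despite the singularities, but the singularity hypotheses allow local control sufficient to carry the argument through.
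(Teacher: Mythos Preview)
Your case analysis mirrors the paper's, and most cases are essentially correct modulo details (the actual GW numbers are not always $1$: the paper gets $-1$, $-2$, $2$ in the divisor-to-surface/curve subcases, and $-4$, $-6$, $4$ in the obstructed $\PP^3$/quadric subcases via explicit Grothendieck--Riemann--Roch computations on $G(2,4)$ and $G(2,5)$ --- the phrase ``non-zero because the underlying projective geometry is'' is not an argument, only the computation is). Note also that in dimension $4$ a flipping locus is a disjoint union of $\PP^2$'s with normal bundle $\OO(-1)\oplus\OO(-1)$ (Kawamata), not rational curves; the paper takes the line class in such a $\PP^2$ and computes $\langle [E],[E]\rangle^X_{0,[L]}$ as the number of components.

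The genuine gap is your singular del Pezzo case. You propose to pass to a small resolution or general deformation of $D$ and invoke deformation invariance, but the invariants in question are invariants of $X$, not of $D$: one cannot deform $D$ independently of its embedding in $X$, and a resolution of $D$ need not embed in $X$, so neither move is available. The paper's argument is different and is the crux of this case: one shows directly (Propositions~\ref{prop:rational} and~\ref{prop:isolated}) that through a general point of $E$ there pass only finitely many lines. For rational singularities this goes via a terminalization $E'\to E$ (crepant, so lines correspond to $(-K_{E'})$-degree-$2$ curves) together with the fact that such a curve through a general point lies in the smooth locus of $E'$ and hence has no deformations fixing a point; for isolated singularities it is a direct bend-and-break (a positive-dimensional family of lines through a general smooth point would all pass through a singular point, giving two fixed points on a minimal-degree curve). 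Once finiteness is established, the moduli space of lines in $E$ has the expected dimension and the invariant $\langle [C]\rangle^X_{0,[L]}$, for a curve $C$ meeting $E$ at general points, is an honest enumerative count and hence positive.
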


Previous results for surfaces and threefolds are obtained by Ruan \cite{RuanExtremal}. Indeed, Ruan observed that this type of results has many interesting consequences regarding the symplectic topology of the underlying symplectic manifolds. In the following, we denote the corresponding symplectic manifold of a smooth projective variety by $(V, \omega)$. First recall some definitions from~\cite{RuanExtremal}.
\begin{defn}
Let $J$ be an almost complex structure on $V$, compatible with $\omega$. Define the $J$-effective cone $NEJ(V)$ to be the subset of
$H_2(V, \RR)$ given by

\{$\sum a_i[C_i] \vert C_i$ is a $J$-holomorphic curve, $a_i \in \RR ~\text{and } a_i\geq 0 $.\}
\end{defn}

\begin{defn}
Define the symplectic effective cone $SNE(V) = \cap NEJ(V)$ for all compatible $J$.

Define the deformed symplectic effective cone $DNE(V) = \cap SNE(V)$ for all the symplectic forms which can be deformed to $\omega$.
\end{defn}
Since Gromov-Witten invariants are symplectic deformation invariant and are zero if the moduli space of $J$-holomorphic curves is empty for some almost complex structure $J$, we get the following corollary.
\begin{cor}\label{cor:extremal}
Extremal rays listed in Theorem \ref{thm:extremal} on smooth projective $4$-folds are
symplectic and deformed symplectic extremal.
\end{cor}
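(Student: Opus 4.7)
The plan is to derive the corollary almost directly from Theorem~\ref{thm:extremal}, combined with two standard properties of Gromov--Witten invariants: (i) non-vanishing of $\langle A_1,\ldots,A_n\rangle^X_{0,[\alpha]}$ forces the moduli space of $J$-holomorphic stable maps in class $[\alpha]$ to be non-empty for every compatible almost complex structure $J$, because otherwise the invariant would be computed on an empty space and vanish; and (ii) Gromov--Witten invariants depend only on the symplectic deformation class.

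Concretely, given an extremal ray $R$ as in the hypothesis, I would first apply Theorem~\ref{thm:extremal} to produce a class $[\alpha]$ spanning $R$ with $\langle A_1,\ldots,A_n\rangle^X_{0,[\alpha]}\neq 0$. Property (i) then yields $[\alpha]\in NEJ(V)$ for every $J$ compatible with $\omega$, so $[\alpha]\in SNE(V)$. Applying property (ii) to any symplectic form $\omega'$ deformable to $\omega$, the same invariant computed with $\omega'$ is still non-zero; hence $[\alpha]$ lies in the analogous cone $SNE(V,\omega')$ for every such $\omega'$, giving $[\alpha]\in DNE(V)$.

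To upgrade from "$[\alpha]$ lies in $SNE(V)$ (resp.~$DNE(V)$)" to "the ray $R$ is extremal in $SNE(V)$ (resp.~$DNE(V)$)", I would use the nesting $DNE(V)\subseteq SNE(V)\subseteq \overline{NE}(X)$, where the last inclusion comes from taking $J$ to be the integrable complex structure $J_0$, for which $J_0$-holomorphic curves are algebraic. Any decomposition $[\alpha]=[\beta_1]+[\beta_2]$ with $[\beta_i]$ in the smaller cone then in particular takes place in $\overline{NE}(X)$, and algebraic extremality of $R$ there forces each $[\beta_i]$ into $R$, proving extremality in $SNE(V)$ and $DNE(V)$.

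The substantive content is packaged entirely inside Theorem~\ref{thm:extremal}; the corollary itself has no serious obstacle. The only point to be careful about is the interpretation of "symplectic/deformed symplectic extremal", which should be read as extremality of the ray $R$ in the corresponding symplectic effective cone, together with the implicit fact that the non-emptiness statement in (i) is uniform over compatible $J$ (ensured by viewing the Gromov--Witten number as a purely topological count independent of the choice of compatible almost complex structure).
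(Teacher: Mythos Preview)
Your proposal is correct and follows essentially the same approach as the paper. The paper's justification is the one-line remark preceding the corollary: Gromov--Witten invariants are symplectic deformation invariants and vanish whenever the relevant moduli space of $J$-holomorphic curves is empty for some compatible $J$; you have simply unpacked this into properties (i) and (ii), and in addition made explicit the cone-nesting argument $DNE(V)\subseteq SNE(V)\subseteq NEJ_0(V)\subseteq \overline{NE}(X)$ needed to pass from ``$[\alpha]$ lies in the cone'' to ``$R$ is extremal in the cone'', a step the paper leaves implicit.
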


This corollary, in particular, shows that all such extremal rays lie in the \emph{effective} cone of any variety symplectic deformation equivalent to the given variety. Note that this is the best one can hope for since the surface $\PP^1 \times \PP^1$ will deform to the Hirzebruch surface $\FF_2$ and only one of the two extremal rays stay extremal (but both stay effective). We expect that all the extremal rays stay effective under symplectic deformations. Such a statement will follow if one can show all the extremal rays support some curve class which gives a non-zero Gromov-Witten invariant. It seems that there is no way to prove such a statememnt without an explicit classification result and computations. However, we note the following.

\begin{thm}\label{thm:ray}
Let $Z$ and $Z'$ be two symplectic deformation equivalent varieties with second Betti number $2$. Assume that there is a cohomology class $A \in H^2(Z, \QQ)=H^2(Z', \QQ)$ which is an ample class for both $Z$ and $Z'$. Then any extremal ray of $Z$ (resp. $Z'$) lie in the effective cone of $Z'$ (resp. $Z$) and at least one of the extremal rays stays extremal.
\end{thm}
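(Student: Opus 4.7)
The plan is to combine non-vanishing Gromov--Witten invariants associated to extremal rays with a planar convexity argument. Write $R_1, R_2$ for the extremal rays of $\overline{NE}(Z)$ and $R_1', R_2'$ for those of $\overline{NE}(Z')$. Since $b_2(Z) = b_2(Z') = 2$, both Mori cones live in the common $2$-dimensional space $H_2(Z, \RR) = H_2(Z', \RR) \cong \RR^2$, each bounded by two extremal rays, and the common ample class $A$ forces both cones to lie in the open half-space $\{x : A \cdot x > 0\}$.

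The first main step is to produce, for each extremal ray $R$ of $Z$, a curve class $[\alpha]$ spanning $R$ with a nonzero Gromov--Witten invariant $\langle A_1, \ldots, A_n \rangle^Z_{0,[\alpha]} \neq 0$, and symmetrically for $Z'$. In dimension four this is essentially the content of Theorem \ref{thm:extremal}, supplemented by a case analysis over the types of $K$-negative extremal contractions that can appear on a $4$-fold with $b_2 = 2$. The second step invokes the symplectic deformation invariance of Gromov--Witten invariants: the nonzero invariant on $Z$ in class $[\alpha]$ transports to the same nonzero invariant on $Z'$ via the identification $H_2(Z,\RR)=H_2(Z',\RR)$. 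By Gromov compactness, the nonvanishing forces $[\alpha]$ to be represented by a $J'$-holomorphic stable map for every compatible almost complex structure $J'$ on $Z'$, so $[\alpha] \in \overline{NE}(Z')$. Thus every extremal ray of $Z$ lies in the effective cone of $Z'$, and symmetrically for $Z'$.

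For the claim that at least one extremal ray is shared, I would argue by planar convexity. Suppose, for contradiction, that $\{R_1, R_2\} \cap \{R_1', R_2'\} = \emptyset$. Then $R_1$ and $R_2$ both lie in $\overline{NE}(Z')$ without being extremal there, so both must lie in the interior of $\overline{NE}(Z')$; hence $\overline{NE}(Z) \subsetneq \overline{NE}(Z')$. The same reasoning with $Z$ and $Z'$ swapped gives $\overline{NE}(Z') \subsetneq \overline{NE}(Z)$, a contradiction. Thus at least one extremal ray is shared; in fact this argument even forces the two Mori cones to coincide whenever Step 1 is available for all four extremal rays.

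The main obstacle I anticipate is Step 1: securing a nonzero Gromov--Witten invariant on every extremal ray. Theorem \ref{thm:extremal} handles many classes of extremal contractions on smooth $4$-folds, but a complete treatment in the $b_2 = 2$ setting may still require handling divisorial contractions to a point whose exceptional divisors fall outside the enumerated list, together with any small contractions that can arise. The $b_2 = 2$ hypothesis is helpful here because after any extremal contraction the Picard number of the image drops to $1$, reducing the relevant geometry to settings closely related to the Picard number $1$ case already handled in the paper via the quotient construction of Koll\'ar--Miyaoka--Mori.
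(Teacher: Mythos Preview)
Your Step~2 and the planar convexity argument for the ``at least one ray stays extremal'' conclusion are fine and match the paper's reasoning. The gap is in Step~1, and you already see it: you need a nonzero Gromov--Witten invariant on \emph{every} extremal ray, but Theorem~\ref{thm:extremal} does not deliver this---it explicitly excludes divisorial contractions to a point whose exceptional divisor lies outside the enumerated list (non-normal exceptional divisors, certain singular del Pezzo threefolds). Your suggestion that $b_2=2$ helps because the target has Picard number~$1$ does not close this gap: the difficulty is computing invariants supported on curves inside the exceptional divisor $E$, and the structure of $E$ is not constrained by the Picard number of the target. Also note that Theorem~\ref{thm:ray} is stated and proved in arbitrary dimension, whereas your approach is tied to the fourfold classification.

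The paper avoids this problem entirely by a different mechanism, due to Wi{\'s}niewski. For a birational extremal contraction $Z\to Y$ with $L$ the pullback of an ample class on $Y$, one studies the Lefschetz maps $L^k:H^{n-k}(Z,\QQ)\to H^{n+k}(Z,\QQ)$. If all of these are isomorphisms, a dimension count on the locus swept out by minimal rational curves in $R$ forces every component of $\overline{\mathcal M}_{0,m}(Z,[C])$ to have the expected dimension, so some Gromov--Witten invariant in class $[C]$ is nonzero and your Step~2 applies. If instead some $L^k$ fails to be an isomorphism, one argues directly: if $R$ were not in $\overline{NE}(Z')$, then (using the common ample class $A$ to place both cones in the same half-plane) $\overline{NE}(Z')$ would lie strictly on one side of the hyperplane $\{L=0\}$, making $L$ or $-L$ ample on $Z'$; Hard Lefschetz on $Z'$ then forces all $L^k$ to be isomorphisms (a purely topological statement, hence transferable to $Z$), a contradiction. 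This dichotomy sidesteps any case-by-case computation of invariants and works in all dimensions.
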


There are several situations where the assumptions are satisfied, e.g. when $(Z, \omega)$ and $(Z', \omega')$ are symplectomorphic, or symplectic isotopy to each other (and the combination of the two). The idea of the proof comes from a result of Wi{\'s}niewski \cite{DefNefValue}. In fact, all one need to do is to adapt the proof to our case.

It is easy to see that if all the extremal rays stay effective under symplectic deformations, then any variety $W$ symplectic deformation equivalent to such a $Z$ (assuming $b_2(Z)=2$) has the same extremal ray under the identification $H_2(Z, \QQ) \cong H_2(W, \QQ)$. Hence the minimal model program (MMP) will produce the same fibration structure (at least in dimension $4$, c.f. Lemma \ref{lem:extremalray}). Then a symplectic Graber-Harris-Starr type result \cite{SymRC} will prove Koll{\'a}r's conjecture in this special case. However, due to the fact that we are unable to compute all the Gromov-Witten invariants associated to the extremal rays, we need, in addition, a topological argument to show the fibration structure is preserved under symplectic deformation.

\textbf{Acknowledgement:} I would like to thank Tom Graber, Yongbin Ruan, Jason Starr, Claire Voisin, and Aleksey Zinger for discussions about various part of the project, Zhiyuan Li and Letao Zhang for their encouragement, and Chenyang Xu for providing the key argument in Lemma \ref{lem:structure} using ``fake projective spaces".

\section{rational curves of small $-K_X$ degree}\label{sec:2}
The main theorem of this section is the following.
\begin{thm}\label{thm:lowdegree}
Let $X$ be a smooth projective Fano $4$-fold of pseudo-index at least $2$ and $f: C\cong \PP^1 \to X$ be a very free curve such that $-f^*K_X \cdot C$ is at most $8$. Then there is a non-zero Gromov-Witten invariant with two point insertions that counts the number of irreducible very free curves satifying the incidence relations.
\end{thm}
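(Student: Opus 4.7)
My approach is to realize the claimed GW invariant enumeratively. Set $\beta := f_{\ast}[C]$ and $d := -K_X \cdot \beta$. Since $f$ is very free on a four-fold, $f^{\ast}T_X \cong \OO(2) \oplus \bigoplus_{i=1}^{3}\OO(a_i)$ with each $a_i \ge 1$, so $d \ge 5$, while by hypothesis $d \le 8$. Choose $n \ge 2$ and auxiliary classes $\alpha_3, \ldots, \alpha_n \in H^{\ge 2}(X, \QQ)$ so that the total insertion codimension $8 + \sum_{i \ge 3}\operatorname{codim}(\alpha_i)$ equals the virtual dimension $d + n + 1$ of $\overline{M}_{0,n}(X,\beta)$. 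For a generic datum $(p_1, p_2, Z_3, \ldots, Z_n)$ representing these classes, the plan is to show that the fiber of the total evaluation map $ev$ over this datum is a nonempty finite reduced set contained in the locus of irreducible very free stable maps, so that $\langle [pt], [pt], \alpha_3, \ldots, \alpha_n \rangle^{X}_{0, \beta}$ is the positive cardinality of this set.

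Nonemptiness and transversality on the interior follow from very-freeness: $f^{\ast}T_X$ is ample on $C \cong \PP^1$, so $H^1(C, f^{\ast}T_X(-q_1 - q_2)) = 0$ for any two marked points, making the two-point evaluation $ev_{12}$ smooth at $[f; q_1, q_2]$ on the smooth-domain stratum and surjective onto $X \times X$; imposing the remaining generic incidence cycles transversely then yields a nonempty finite reduced set of irreducible very free stable maps. The crux is excluding stable maps with reducible domain from this fiber. Let $(C', f')$ be such a map, $C' = C'_1 \cup \cdots \cup C'_k$ with $k \ge 2$, and classes $\beta'_i$ summing to $\beta$. The pseudo-index hypothesis forces $d'_i := -K_X \cdot \beta'_i \ge 2$, so $k \le \lfloor d/2 \rfloor \le 4$. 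For each combinatorial type the virtual dimension of the corresponding boundary stratum in $\overline{M}_{0,n}(X,\beta)$ is $d + n - k + 2$: a sum of $d'_i + n_i + 1$ over components (with $n_i$ the number of special points on $C'_i$) minus the $(k-1)\dim X = 4(k-1)$ codimensions imposed by gluing the images at the $k-1$ internal nodes. Since the total insertion codimension $d + n + 1$ exceeds the stratum dimension by $k - 1 \ge 1$, a generic datum lies outside the image of $ev$ on that stratum, and the boundary is avoided.

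The main obstacle is ensuring this virtual codimension count is realized geometrically: the actual dimension of a boundary stratum might exceed its virtual one if some component $C'_i$ is non-free or obstructed, so that its Kontsevich moduli exceeds dimension $d'_i + 1$. The pseudo-index hypothesis is what controls this — on a Fano variety of pseudo-index $\ge 2$, any rational curve passing through a generic point of $X$ lies in a covering family whose general member is free, so non-free curves form a proper subvariety of the relevant Kontsevich moduli that generic marked-point incidences (directly for components carrying marks, and indirectly via the gluing constraints for the rest) avoid. Since $k \le 4$, only finitely many tree shapes and distributions of marked points need to be analyzed, and a short bend-and-break argument handles each. Once the boundary is excluded, very-freeness of $f$ implies $\overline{M}_{0,n}(X,\beta)$ is smooth of the expected dimension at each contributing stable map, so the virtual fundamental class agrees with the ordinary one there and the GW invariant equals the positive geometric count.
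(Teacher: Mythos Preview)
Your overall architecture matches the paper's: fix two general points and auxiliary incidence cycles, observe that irreducible very free curves contribute positively, and then exclude reducible stable maps. The gap is entirely in your final paragraph, where you assert that ``a short bend-and-break argument handles each'' boundary configuration. That is precisely the substantive content of the theorem, and the paper devotes most of Section~\ref{sec:2} to it.

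Two concrete problems. First, you work directly with the given class $\beta=f_*[C]$, whereas the paper immediately replaces $C$ by a very free curve of \emph{minimal} $-K_X$-degree. Minimality is used repeatedly to close the bend-and-break arguments. For example, with $d=8$ and a chain $C_1\cup C_2\cup C_3$ of degrees $2,4,2$, the middle curve $C_2$ may deform only inside a divisor, so the Hom-scheme at $[C_2]$ can exceed its expected dimension; the paper bounds the excess by showing that otherwise bend-and-break produces a degree-$2$ curve $D$ through two general points of that divisor, and then $C_1\cup D\cup C_3$ smooths to a very free curve of degree $6$, contradicting minimality. Without minimality there is no contradiction, hence no dimension bound, and your virtual count $d+n-k+2$ need not control the actual stratum. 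The four-component cases are disposed of by the same mechanism.

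Second, even after passing to the minimal curve, the case $d=8$ is not a routine count. The paper first proves (Lemma~\ref{lem:balance}, via a surface construction taken from Shen) that the normal bundle of a general minimal very free curve of degree $8$ is not $\OO(1)\oplus\OO(1)\oplus\OO(4)$. This is what guarantees that deformations fixing two points sweep out a \emph{divisor}, so that the specific insertion $\langle [pt],[pt],[A]^3,[A]^2\rangle$ actually receives a positive contribution from the irreducible locus; a generic choice of $\alpha_i$ as in your proposal does not secure this. The subsequent case analysis then uses the distinction between the curve constraint $[A]^3$ and the surface constraint $[A]^2$ in an essential way. Your phrase ``indirectly via the gluing constraints'' is exactly where the difficulty lives: a middle component meets only points lying on the adjacent free curves, and these are \emph{not} general points of $X$, so nothing forces that component into a family of the expected dimension.
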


The strategy of the proof is the following. We want to show that any curve (in the chosen class) that can meet all the constraints are irreducible free curves, which in turn implies that it contributes positively to the Gromov-Witten invariant. To do this, we begin with curves meeting two very general points. If this curve is reducible and meet other constraints, then we show that it is smoothable. Also it will lie in a component whose general point parametrize a very free rational curve. In particular, this component has the expected dimension. Then we can choose the constraints such that only general curves (hence irreducible) in the components of expected dimension can meet all the constraints.

We first give $2$ sufficient conditions on when a reducible curve is smoothable.
\begin{lem}\label{lem:smoothing}
Let $X$ be a smooth projective variety. Let $C=C_1 \cup C_2 \cup C_3$ be a chain of $\PP^1$s and $f: C \to X$ be a stable map. Assume that $C_1$ and $C_2$ are free curves and the Hom-scheme $Hom(\PP^1, X)$ has expected dimension $\dim X+(-K_X \cdot C_2)$ at the point $(f: C_2 \to X)$. Then the general point of any irreducible component of the Kontsevich moduli space containing $(C, f)$ parametrizes to an irreducible curve. If one of the free curves contains a very general point, then a very general deformation is free.
\end{lem}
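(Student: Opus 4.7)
Let me denote $\beta = [C_1] + [C_2] + [C_3]$ and let $V$ be any irreducible component of $\overline{M}_{0,0}(X, \beta)$ containing $(C, f)$. The plan is to show that each of the two nodes of $C$ can be smoothed separately within $V$, producing two linearly independent deformation directions in $T_{(C,f)} V$; a codimension count will then force the generic member of $V$ to be irreducible, and the freeness statement will follow from a standard genericity argument.

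For the node $p_{12} = C_1 \cap C_2$, I would invoke Koll\'ar's smoothing lemma (Complement II.7.6.1 of \emph{Rational Curves on Algebraic Varieties}). Freeness of $C_1$ yields $h^1(C_1, (f|_{C_1})^*T_X(-p_{12})) = 0$, which is the vanishing required to smooth $p_{12}$ in a one-parameter family within $V$ while keeping $p_{23}$ as a node; the generic member of this family is a stable map from the two-component chain $\tilde C_{12} \cup C_3$, where $\tilde C_{12}$ is a smooth $\PP^1$. A parallel argument using freeness of $C_2$, reinforced by the expected-dimension hypothesis on the Hom-scheme at $C_2$, produces a transverse one-parameter family smoothing only $p_{23}$, with generic member $C_1 \cup \tilde C_{23}$. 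These two smoothings are independent in $T_{(C,f)} \overline{M}_{0,0}(X, \beta)$ because they correspond to independent local classes in $\mathrm{Ext}^1(\Omega_C, \OO_C)$ supported at the distinct nodes $p_{12}$ and $p_{23}$, so the stratum of three-component chains in $V$ has codimension at least $2$.

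To handle the intermediate strata, observe that the smoothed curve $\tilde C_{12}$ is itself free (a generic smoothing of two free curves joined at a single point is free; cf.\ Koll\'ar II.7.8.1), so a further application of the smoothing lemma to $\tilde C_{12} \cup C_3$ shows that the two-component chain stratum has codimension at least $1$ in $V$. Hence the generic point of $V$ parametrizes an irreducible rational curve. For the final assertion, suppose $C_i$ contains a very general point $q \in X$ for some $i \in \{1, 2\}$. Then by freeness of $C_i$, the evaluation map from $V$ (equipped with an auxiliary marked point on the free component) to $X$ is dominant near $(C,f)$, and consequently a very general member of $V$ parametrizes an irreducible rational curve through a very general point of $X$. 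Such a curve is necessarily free, since non-free rational curves of class $\beta$ sweep out at most a proper closed subvariety of $X$, which any very general point of $X$ avoids.

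The main obstacle I anticipate is verifying the independence of the two node smoothings inside $V$, i.e., ensuring that each smoothing direction is realized as a genuine tangent vector to $V$ and is not absorbed by automorphisms of the domain or by reparametrization of the nodes; the hypothesis that the Hom-scheme at $C_2$ has the expected dimension is the technical input that guarantees the obstruction to smoothing $p_{23}$ vanishes independently of the obstruction to smoothing $p_{12}$.
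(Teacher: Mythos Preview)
Your approach is quite different from the paper's, and more complicated than it needs to be. The paper's proof is a one-line application of Koll\'ar's comb-smoothing estimate (Theorem~II.7.9 in \emph{Rational Curves on Algebraic Varieties}): regard $C_2$ as the \emph{handle} of a comb with $m=2$ teeth $C_1,C_3$, and use the bound
\[
m' \;\geq\; m \;-\;\Bigl(\dim_{[f|_{C_2}]}\mathrm{Hom}(\PP^1,X)-\bigl(\dim X - K_X\cdot C_2\bigr)\Bigr).
\]
The expected-dimension hypothesis on $C_2$ makes the parenthesis vanish, so $m'=2$ and the whole comb smooths in a single step. No node-by-node analysis, no codimension bookkeeping.

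Your route has two soft spots. First, the assertion that freeness of $C_1$ alone (via $h^1(C_1,f^*T_X(-p_{12}))=0$) lets you smooth $p_{12}$ inside $V$ while holding $p_{23}$ fixed is not what Koll\'ar II.7.6 or its complement actually say: those results require $H^1$-vanishing on the \emph{entire} domain, not on a single component, so an extra argument is needed to lift the local node-smoothing class from $\mathrm{Ext}^1(\Omega_C,\OO_C)$ to a genuine tangent direction in $V$. The comb estimate is precisely the tool that bypasses this, and it does so globally rather than one node at a time. Second, your ``independence of the two smoothings'' paragraph is superfluous: once you can deform $(C,f)$ to an irreducible curve, you are done; the codimension-$2$ claim for the three-component stratum is neither needed nor obviously justified.

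There is also a likely typo in the statement that you did not catch: the paper's own proof treats $C_1$ and $C_3$ as the (free) teeth and $C_2$ as the handle, and the lemma is later applied with the two outer components free and the middle one merely of expected dimension. If $C_2$ were genuinely assumed free, the expected-dimension hypothesis you call ``the technical input'' would be automatic and the lemma would be stated oddly. Reading the hypothesis as ``$C_1$ and $C_3$ free'' makes both the statement and the comb argument transparent, and clarifies why the expected-dimension condition on $C_2$ is exactly what is doing the work.
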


\begin{proof}
This follows from the estimate in Theorem 7.9 in Chapter 2 of \cite{Kollar96}. Namely, for a comb with $m$-teeth, there is a subcomb with $m'$ teeth that is smoothable. And a lower bound of $m'$ is given by 
\begin{equation}\label{eq:1}
m' \geq m-(K_X \cdot C_2)+\dim X-\dim_{[f\vert_{C_2}]}Hom(\PP^1, X).
\end{equation}
By assumption, $-(K_X \cdot C_2)+\dim X=\dim_{[f\vert_{C_2}]}Hom(\PP^1, X)$. Thus $m'=2$ and the reducible curve is smoothable. If one of the free curves contains a very general point, a very general deformation of this reducible curve will also contain a very general point, thus free.
\end{proof}

\begin{lem}\label{lem:smoothable}
Let $X$ be a Fano $4$-fold of pseudo-index $i(X)$ at least $2$. Let $C=C_1 \cup C_2 \cup C_3$ be a chain of $\PP^1$s and $f: C \to X$ be a stable map. Assume that $-f^*K_X \cdot C_2\leq 3$ and $C_i$ passes through a very general point for $i=1, 3$. Then a general point of any irreducible component of the Kontsevich moduli space containing $(C, f)$ corresponds to an irreducible very free curve.
\end{lem}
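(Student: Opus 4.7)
The plan is to verify the hypotheses used in the proof of Lemma \ref{lem:smoothing} for $(f:C\to X)$, conclude that $(f)$ lies in irreducible components of the Kontsevich moduli space whose generic elements are irreducible free rational curves through $p_1$ and $p_3$, and then upgrade free to very free. That $C_1$ and $C_3$ are free is immediate: any rational curve through a very general point of a smooth projective variety is free (\cite{Kollar96}, Chapter II).

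The key step is to show that $Hom(\PP^1,X)$ has its expected dimension $\dim X + (-K_X\cdot C_2)$ at $[f|_{C_2}]$. By the pseudo-index hypothesis, $-K_X\cdot C_2\in\{2,3\}$. Suppose for contradiction that $h^1(\PP^1,f|_{C_2}^*T_X)\geq 1$. Then after fixing the image of one marked point to a general point in the evaluation image and quotienting by the $2$-dimensional stabilizer in $\mathrm{Aut}(\PP^1)$, one obtains a positive-dimensional family of pointed rational curves in $\overline{\mathcal{M}}_{0,1}(X,[f|_{C_2}])$. Since this family cannot collapse to a single $\mathrm{Aut}(\PP^1)$-orbit, its members sweep out a surface in $X$, and Mori's bend-and-break (\cite{Kollar96}, II.5) yields a limiting stable map whose image $1$-cycle is reducible. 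But any decomposition of a class of $-K_X$-degree at most $3$ into two non-constant pieces forces total $-K_X$-degree $\geq 4$ by the pseudo-index hypothesis, a contradiction. Hence $h^1=0$.

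The Koll\'ar comb-smoothing estimate used in the proof of Lemma \ref{lem:smoothing} now applies with $C_2$ as the handle and $C_1, C_3$ as teeth: the general point of any irreducible component of $\overline{\mathcal{M}}_{0,2}(X,\beta)$ containing $(f)$ parametrizes an irreducible rational curve, and because the free curve $C_1$ contains the very general point $p_1$, a general smoothing $g:\PP^1\to X$ is itself free. To upgrade to very free, observe that $g$ passes through two very general points $p_1, p_3$, so the evaluation $ev_{q_1,q_3}$ on the $Hom$ component through $g$ is dominant at $g$; its differential $H^0(\PP^1,g^*T_X)\to T_X|_{p_1}\oplus T_X|_{p_3}$ is therefore surjective, equivalently $H^1(\PP^1,g^*T_X(-q_1-q_3))=0$. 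This forces every Grothendieck summand of $g^*T_X$ to have degree $\geq 1$, so $g^*T_X$ is ample. Since irreducibility of the domain and very freeness are open conditions on the Kontsevich moduli, the conclusion follows.

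The main obstacle is the bend-and-break step: one must ensure the limiting degeneration produces an honest $1$-cycle with multiple non-constant components, and not merely a stable map with a contracted bubble over an irreducible image. This is handled by verifying that the positive-dimensional moduli cannot collapse to a single $\mathrm{Aut}(\PP^1)$-orbit, so the images move in a two-dimensional family---the precise hypothesis under which Mori's bend-and-break forces actual $1$-cycle reducibility, which is then ruled out by pseudo-index.
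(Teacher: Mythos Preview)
Your bend-and-break step does not work as stated. Fixing a single point on $C_2$ and obtaining a positive-dimensional family of rational curves through it does not force any limit to have reducible image $1$-cycle: the relevant result in \cite{Kollar96}, II.5.6, requires \emph{two} fixed points. The family of lines through a fixed point in $\PP^3$ is a complete $2$-dimensional family sweeping out all of $\PP^3$ with no reducible member, so ``images move in a two-dimensional family'' is not a substitute for the two-fixed-point hypothesis. In fact, a positive-dimensional family of curves in the class $[C_2]$ through a single general point of the evaluation image exists even when $h^1=0$ and the component has expected dimension (its dimension is at least $-K_X\cdot C_2 -1 \geq 1$), so no contradiction can possibly be extracted from this setup alone. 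There is also a conflation in your hypothesis: $h^1\geq 1$ at the specific map $f|_{C_2}$ does not imply that the component of $Hom$ through it fails to dominate $X$ or exceeds the expected dimension.

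The paper's argument is structured differently and does not attempt to prove $h^1=0$ or even expected dimension. It establishes only the bound $\dim_{[f|_{C_2}]} Hom \leq 7$, and the two fixed points needed for bend-and-break are supplied by the chain itself: if the deformations of $C_2$ sweep out a proper subvariety $S$, then the nodes $C_1\cap C_2$ and $C_2\cap C_3$ lie among the finitely many points of $C_1\cap S$ and $C_3\cap S$, and bend-and-break with two such points fixed (together with $i(X)\geq 2$ and $-K_X\cdot C_2\leq 3$) shows $C_2$ is rigid once pinned. A fiber-dimension count on the two-point evaluation then gives $\dim Hom\leq 7$. Feeding this into the comb estimate yields only $m'\geq 1$, so possibly just one node smooths; the paper handles that case separately by noting that the resulting two-component curve has each piece through a very general point, hence each free, and smooths once more to an irreducible very free curve.
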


\begin{proof}
We first show that every irreducible component of $Hom(\PP^1, X)$ at $[C_2]$ has dimension at most $7$. If the deformation of $C_2$ in one component covers the whole $X$, a general point of that irreducible component parametrizes a free rational curve. Thus the dimension is exactly $-K_X \cdot C_2+\dim X \leq 7$. So for now assume that the deformation of $C_2$ in one irreducible component $U$ is contained in a subvariety $S$. Both $C_1$ and $C_3$ intersect the subvariety $S$ at finitely many points and $C_2$ has to pass through at least two of them. Then $C_2$ does not move once we make the choice of the two points. Otherwise we can deform $C_2$ fixing two points and, by bend-and-break, $C_2$ breaks into a reducible or non-reduced curve. But this cannot happen since $-K_X \cdot C_2\leq 3$, $i(X) \geq 2$ and $-K_X$ is ample. The above analysis shows that the evaluation map
\[
\PP^1 \times \PP^1 \times Hom(C_2, X) \to X\times X
\] 
has fiber dimension $3$ (coming from reparametrization ) and the image has dimension $\dim S \times 2\leq 3 \times2=6$. Therefore,
\[
\dim_{[f\vert_{C_2}]}Hom(C_2, X) \leq 7.
\]

The estimate \ref{eq:1} in Lemma \ref{lem:smoothing} shows that at least one of the nodes can be smoothed out. If only one node is smoothed, then we get a reducible curve, which is the union of two irreducible curves each passing through a very general point. Then a general deformation of this new curve is irreducible and very free. 
\end{proof}

We need to analyze the normal bundle of the minimal very free curve. The following construction, used in \cite{ShenMM}, is very helpful.

\begin{constr}
Let $C$ be a very free curve with $-K_X$ degree equal to either $7$ or $8$ and general in an irreducible component of moduli space of very free curve.  Assume the normal bundle of $C$ is $\OO(1)\oplus \OO(1)\oplus\OO(a), a=3 ~\text{or}~4$. 

 Choose $2$ points in $C$ and deform $C$ with the two points fixed. Then the deformation of $C$ sweeps out a surface $\Sigma$ in $Y$. Let $\Sigma'$ be the normalization and $\tilde{\Sigma}$ be the minimal resolution of $\Sigma'$. 
\end{constr}

We need the following definition.

\begin{defn}
Let $C_i \subset X_i$ be a curve on a variety $X_i$, $i=1, 2$. We say $(X_1,C_1)$ is \emph{equivalent to} $(X_2, C_2)$ if there is an open neighborhood $V_i$ of $C_i$ in $X_i$ and an isomorphism $f: V_1 \rightarrow V_2$ such that $f\vert_{C_1}:C_1 \rightarrow C_2$ is also an isomorphism.
\end{defn}

The following results are proved in \cite{ShenMM}, Section 2.2, 2.3.

\begin{prop}[\cite{ShenMM}, Corollary 2.2.7, Proposition 2.3.3, Lemma 2.3.13]\label{prop:Shen}
Notations as above.

\begin{enumerate}
\item $\Sigma$ is independent of the choice of the points. ${\Sigma}'$ is smooth along $C$ and $N_{C/\Sigma'}\cong \OO(a)$. 
\item There is a neighborhood $U$ of $C$ such that the map $\phi : \tilde{\Sigma} \rightarrow X$ has injective tangent map.
 And the normal sheaf $N_{\tilde{\Sigma}/X}$ is locally free along $C$ and $N_{\tilde{\Sigma}/X}\vert_C \cong \OO(1)\oplus \OO(1)$.
\item The pair $(\tilde{\Sigma}, C)$ is equivalent to $(\PP^2, \text{conic})$ or $(\FF_n, \sigma)$, where $\FF_n$ is the $n$-th Hirzebruch surface and $\sigma$ is a section of $\FF_n \to \PP^1$. 

\end{enumerate}
\end{prop}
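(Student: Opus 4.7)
The plan is to extract the structure of $\Sigma$ from the deformation theory of $C$ in $X$. Using $N_{C/X} \cong \OO(1) \oplus \OO(1) \oplus \OO(a)$, the infinitesimal deformations of $C$ fixing the two chosen points form $H^0(N_{C/X} \otimes \mcI_{p_1, p_2}) = H^0(\OO(-1) \oplus \OO(-1) \oplus \OO(a-2))$, an $(a-1)$-dimensional space concentrated in the $\OO(a)$-summand (which is canonical inside $N_{C/X}$ since $\mathrm{Hom}(\OO(a), \OO(1)) = 0$ for $a \geq 2$). At any $q \in C \setminus \{p_1, p_2\}$, the evaluation $H^0(\OO(a-2)) \to \OO(a-2)|_q$ is surjective (as $a - 2 \geq 1$), so the image in $N_{C/X}|_q$ is precisely the canonical line $\OO(a)|_q$.

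This yields part (1): the tangent plane to the swept-out locus at $q$ is $T_q C$ together with this canonical line, a $2$-plane in $T_q X$ depending only on $C$. Hence $\Sigma$ is independent of $p_1, p_2$, the normalization $\Sigma'$ is smooth along $C$, and $N_{C/\Sigma'}$ identifies with the $\OO(a)$-summand. For part (2), smoothness of $\Sigma'$ near $C$ forces $\tilde{\Sigma} = \Sigma'$ there; the injectivity of $d\phi$ along $C$ is the $2$-dimensionality of the canonical plane inside $T_q X$; and the subbundle $T_{\tilde{\Sigma}}|_C = T_C \oplus N_{C/\tilde{\Sigma}} = \OO(2) \oplus \OO(a)$ sits compatibly inside $f^*T_X|_C = \OO(2) \oplus \OO(1) \oplus \OO(1) \oplus \OO(a)$, so the quotient gives $N_{\tilde{\Sigma}/X}|_C \cong \OO(1) \oplus \OO(1)$.

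For part (3), the family of deformations foliates $\tilde{\Sigma}$ by rational curves through the two common image points. A general pencil inside this family, after resolving its base points, gives a $\PP^1$-fibration over $\PP^1$ on a blow-up of $\tilde{\Sigma}$, so $\tilde{\Sigma}$ is smooth and rational. To identify the pair $(\tilde{\Sigma}, C)$ with $(\PP^2, \text{conic})$ or $(\FF_n, \sigma)$ up to the equivalence of the paper---a Zariski-open isomorphism preserving $C$---I would pass to a minimal model of $\tilde{\Sigma}$ by contracting $(-1)$-curves. If those $(-1)$-curves are disjoint from $C$, the Zariski germ of $C$ is unchanged, and $C$ lives on $\PP^2$ or some $\FF_n$ as a smooth rational curve with $C^2 = a$: either a conic in $\PP^2$ (only possible when $a = 4$), or a section of $\FF_n$ with $n \equiv a \pmod 2$.

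The main obstacle is this final identification in part (3). One must verify that the contractions required to reach a minimal model do not disturb a Zariski open neighborhood of $C$---equivalently, that no $(-1)$-curve of $\tilde{\Sigma}$ meets $C$, since otherwise the contraction would alter $C^2$. This is where the positivity of $N_{C/\tilde{\Sigma}} = \OO(a)$ should come into play, together with a careful use of the explicit pencil structure constructed above; in practice it may be more efficient to directly build the isomorphism with the standard model from the ruling supplied by the pencil, rather than to route the argument through minimal model contractions.
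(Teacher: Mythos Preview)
The paper does not supply its own proof of this proposition: it is quoted from \cite{ShenMM} (Corollary~2.2.7, Proposition~2.3.3, Lemma~2.3.13), so there is nothing in the paper to compare your attempt against. That said, your sketch has the right shape, and two points deserve comment.

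Your first-order analysis underlying parts (1) and (2) is correct and is indeed the engine of the argument: the $\OO(a)$-summand of $N_{C/X}$ is canonical (it is the first step of the Harder--Narasimhan filtration, since $a>1$), and all pointed infinitesimal deformations land inside it. But the sentence ``Hence $\Sigma$ is independent of $p_1,p_2$'' is a genuine jump. Knowing that the tangent plane to $\Sigma_{p_1,p_2}$ along $C$ equals the canonical $2$-plane $T_C\oplus\OO(a)$ does \emph{not} force two such surfaces to coincide away from $C$: two smooth surface germs can share a curve and agree to first order along it yet differ (e.g.\ $\{z=0\}$ and $\{z=x^2\}$ along the $y$-axis in $\mathbb{A}^3$). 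One clean way to close this gap: once you have established that $\Sigma'=\Sigma'_{p_1,p_2}$ is smooth along $C$ with $N_{C/\Sigma'}\cong\OO(a)$, observe that deformations of $C$ \emph{inside} $\Sigma'$ fixing any other pair $p_1',p_2'$ have tangent space $H^0(\OO(a-2))$ of dimension $a-1$, equal to the dimension of the pointed deformation space in $X$; since the inclusion $N_{C/\Sigma'}\hookrightarrow N_{C/X}$ induces an injection on these $H^0$'s, every pointed deformation of $C$ in $X$ already lies in $\Sigma'$, whence $\Sigma_{p_1',p_2'}\subseteq\Sigma_{p_1,p_2}$ and equality by symmetry.

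For part (3) you have honestly flagged the obstacle, and it is real: the minimal-model route requires ruling out $(-1)$-curves meeting $C$, which does not follow from what you have written. Note, however, that the equivalence in question is only a Zariski-local statement near $C$, so a global minimal model of $\tilde{\Sigma}$ is not needed at all. Your closing suggestion---to build the model directly from the ruling rather than via contractions---is the more efficient route and is closer to how such statements are proved in \cite{ShenMM}; pursuing it (e.g.\ via the morphism defined on a neighborhood of $C$ by a suitable linear subsystem of $|C|$) should let you complete the argument.
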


Note that a very free curve in $\Sigma'$ deforms to a very free curve in $X$. Indeed, it suffices to show that for two general points, the very free curve deforms to a curve that contains these two points. But one can first choose the curve $C$ to contain these two points. Then a deformation of the very free curve contains these two points.

\begin{lem}
The pair $(\tilde{\Sigma}, C)$ is not equivalent to $(\PP^2, \text{conic})$.
\end{lem}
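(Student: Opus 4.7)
The plan is to derive a contradiction by showing that the conic case would produce a rational curve in $X$ of $-K_X$-degree at most $4$ through two very general points of $X$, which is prohibited by the standard bound $-K_X\cdot C\ge\dim X+1=5$ for rational curves connecting two very general points of a smooth Fano variety.

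First I would pin down that the conic case forces $a=4$ (hence $-K_X\cdot C=8$), because the local equivalence preserves the normal bundle $N_{C/\Sigma'}=\OO(a)$ and a smooth plane conic has self-intersection $4$. I would then choose the setup so that $\phi(p_1),\phi(p_2)\in X$ are very general: fix these two points in $X$ to start with and take $C$ to be a general very free rational curve of the prescribed class through both (which exists by very-freeness).

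Next I would promote the local equivalence $(\tilde{\Sigma},C)\sim(\PP^2,\text{conic})$ to a morphism on all of $\PP^2$. Let $V\subset\tilde{\Sigma}$ and $V'\subset\PP^2$ realize the equivalence, identifying $C$ with a smooth conic $C_0\subset V'$. Since $C_0$ is ample on $\PP^2$, any closed subscheme of positive dimension in $\PP^2$ meets $C_0$, forcing $\PP^2\setminus V'$ to have codimension at least $2$, hence to be a finite set of points. Smoothness of $\PP^2$ and projectivity of $X$ then allow the composition $\bar\phi:V'\xrightarrow{\sim}V\xrightarrow{\phi}X$ to extend uniquely to a morphism $\hat\phi:\PP^2\to X$. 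Under the identification $C_0\cong C$ we have $\hat\phi|_{C_0}=\phi|_C$, so $\hat\phi^*(-K_X)\cdot C_0=-K_X\cdot C=8$; combined with $C_0\sim 2H$ in $\PP^2$, this gives $\hat\phi^*(-K_X)\cong\OO_{\PP^2}(4)$.

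Finally, take $L_0\subset\PP^2$ to be the unique line through the two points of $C_0$ identified with $p_1,p_2$, and set $g:=\hat\phi|_{L_0}:\PP^1\to X$. Then $\deg g^*(-K_X)=\OO_{\PP^2}(4)\cdot L_0=4$ and $g$ sends these two points to $\phi(p_1),\phi(p_2)$. Because $\hat\phi$ agrees with the immersive map $\phi|_V$ near $C$ by Proposition~\ref{prop:Shen}, $g$ has nonzero differential at each of $p_1,p_2$ and is therefore non-constant. Its image is a rational curve in $X$ of $-K_X$-degree at most $4$ passing through the very general pair $\phi(p_1),\phi(p_2)$, contradicting the $n+1=5$ lower bound (the hypothesis $i(X)\ge 2$ precludes a multiple-cover reduction of $g$ to a curve of degree $1$). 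The main technical delicacy I expect to iron out is the bound on rational curves through two very general points, which for Fano $4$-folds can be checked by the standard deformation argument: such a curve must have ample pullback of the tangent bundle, equivalently a very free parametrization, hence normal-bundle degree sum at least $\dim X-1=3$ and $-K_X$-degree at least $5$.
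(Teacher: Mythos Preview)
Your overall strategy matches the paper's: from the local $(\PP^2,\text{conic})$ picture, produce a ``line'' whose image in $X$ has $-K_X$-degree at most $4$, then invoke the lower bound $-K_X\cdot C\ge\dim X+1=5$ for a rational curve through two very general points. The observation that the conic case forces $a=4$ (hence $-K_X\cdot C=8$) is correct and sharper than the paper's ``$\le 4$''. However, there is a real gap in your argument. You claim that, since $\PP^2\setminus V'$ is finite and $\PP^2$ is smooth, the morphism $\bar\phi:V'\to X$ extends to a morphism $\hat\phi:\PP^2\to X$. This is false in general: rational maps from smooth surfaces to projective varieties need not extend across finitely many points (the Cremona involution of $\PP^2$, or a linear projection $\PP^2\dashrightarrow\PP^1$, are already counterexamples). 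Without a global morphism $\hat\phi$, your degree computation $\deg g^*(-K_X)=\OO_{\PP^2}(4)\cdot L_0=4$ is unjustified; in the Cremona example the line through two base points is contracted, so the degree of the extended map on that line drops from the ``expected'' $2$ to $0$.

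The gap is easily repaired. One option: resolve the indeterminacy by blow-ups $\pi:S\to\PP^2$ to get a morphism $\Phi:S\to X$; since $-K_X$ is ample, $\Phi^*(-K_X)$ is nef, and from $\pi^*L_0=\tilde L_0+\sum m_iE_i$ with $m_i\ge 0$ one obtains $\deg g^*(-K_X)=\Phi^*(-K_X)\cdot\tilde L_0\le\Phi^*(-K_X)\cdot\pi^*L_0=4$, which is all you need. The paper's proof takes the cleaner route that avoids the issue altogether: it simply picks a \emph{general} line $\ell$, which lies entirely inside $V'$ (the complement being finite), transports it to a curve $L\subset\tilde\Sigma$, and then invokes the observation recorded just before the lemma that any very free curve in $\Sigma'$ deforms to a very free curve in $X$. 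This replaces your careful pre-arrangement of the two points and the extension step by a one-line appeal to that deformation fact.
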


\begin{proof}
If the pair $(\tilde{\Sigma}, C)$ is equivalent to $(\PP^2, \text{conic})$, there exists a rational curve $L$ such that the pair $(\tilde{\Sigma}, L)$ is equivalent to the pair $(\PP^2, \text{line})$. The curve $L$ deforms to a very free curve in $X$. But in this case $-K_X \cdot L \leq 4$, which is impossible since a very free curve in a $4$-fold has $-K_X$ degree at least $5$.
\end{proof}

\begin{lem}
If the pair $(\tilde{\Sigma}, C)$ is equivalent to $(\FF_n, \sigma)$, then there is a very free curve with smaller $-K_X$ degree.

\end{lem}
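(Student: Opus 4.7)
The plan is to produce, using the Hirzebruch structure $\tilde{\Sigma}\cong \FF_n$, a section of smaller self-intersection whose image in $X$ is a very free rational curve of strictly smaller $-K_X$ degree than $C$. Writing $[\sigma]=[\sigma_0]+b[F]$ in terms of the minimal section $\sigma_0$ (with $\sigma_0^2=-n$) and a fiber $F$, one has $b=(n+a)/2\geq 2$ for $a\in\{3,4\}$ and $n\geq 0$. Let $\sigma'$ be a section in the class $[\sigma_0]+(b-1)[F]$, so $\sigma\sim \sigma'+F$ and $(\sigma')^2=a-2\in\{1,2\}$; in particular $\sigma'$ has positive self-intersection, so it cannot be contracted by $\phi$.

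The degree comparison is easy. The fiber $F$ is not contracted since $\phi\colon\tilde{\Sigma}\to X$ is generically finite onto the surface $\Sigma$, so the pseudo-index hypothesis gives $-K_X\cdot \phi_*F\geq i(X)\geq 2$. From $\sigma\sim \sigma'+F$ we deduce
\[
-K_X\cdot \phi_*\sigma' \;=\; -K_X\cdot C \;-\; \bigl(-K_X\cdot \phi_*F\bigr)\;\leq\; -K_X\cdot C - 2,
\]
which is strictly smaller than $-K_X\cdot C$.

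The substantive step is establishing very freeness of $\phi_*\sigma'$. I would work with the normal bundle sequence
\[
0\to N_{\sigma'/\tilde{\Sigma}}\to N_{\sigma'/X}\to N_{\tilde{\Sigma}/X}|_{\sigma'}\to 0.
\]
Here $N_{\sigma'/\tilde{\Sigma}}=\OO(a-2)$ is positive, and the degree of the quotient equals $4-(-K_X\cdot\phi_*F)$, obtained by combining $c_1(N_{\tilde{\Sigma}/X})\cdot \sigma=2$ (from Proposition~\ref{prop:Shen}(2)) with the relation $c_1(N_{\tilde{\Sigma}/X})\cdot F = -K_X\cdot\phi_*F-2$, which comes from the adjunction identity $\phi|_F^* T_X \cong T_F\oplus N_{F/X}$ together with $N_{F/\tilde{\Sigma}}=\OO$. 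In the central regime $-K_X\cdot\phi_*F=2$ the quotient has degree $2$; choosing $\sigma'$ generically in its linear class so that the restriction of the rank-two sheaf $N_{\tilde{\Sigma}/X}$ lies in the balanced Zariski-open locus, the splitting type along $\sigma'$ is $\OO(1)\oplus\OO(1)$, so $N_{\sigma'/X}$ is an extension of ample bundles, hence ample, and $\phi_*\sigma'$ is very free.

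The main obstacle is the unbalanced regime $-K_X\cdot\phi_*F\geq 3$, where the quotient can have non-positive degree and $\phi_*\sigma'$ may fail to be very free. In that case I would switch candidates to $\phi_*F$ itself: its $-K_X$ degree lies between $i(X)$ and $-K_X\cdot C-2$, and a parallel normal-bundle analysis using the sequence $0\to N_{F/\tilde{\Sigma}}\to N_{F/X}\to N_{\tilde{\Sigma}/X}|_F\to 0$ yields very freeness whenever $-K_X\cdot \phi_*F\geq 5$. The remaining intermediate degrees (notably $a=3$, $-K_X\cdot\phi_*F=4$) would be handled by attaching $\phi(F)$ as a tooth to $\phi(\sigma')$ and applying Lemma~\ref{lem:smoothing} to extract an irreducible very free deformation of $-K_X$ degree strictly less than $-K_X\cdot C$.
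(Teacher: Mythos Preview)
Your argument has two genuine gaps. First, you assert the existence of a \emph{section} $\sigma'$ in the class $[\sigma_0]+(b-1)[F]$, but on $\FF_n$ the linear system $|\sigma_0+kF|$ has an irreducible member only when $k\geq n$; for $k<n$ every member contains $\sigma_0$ as a component. So you need $b-1\geq n$, i.e.\ $n\leq a-2$, and this is \emph{not} automatic from $b\geq 2$. The paper obtains precisely this inequality by a short numerical case analysis: writing $C=D+cF$ (so $c=b$), one combines $a=2c-n$ with $2c\leq c(-K_X\cdot F)\leq -K_X\cdot C=a+4$ and the pseudo-index bound $-K_X\cdot F\geq 2$ to list the finitely many possibilities and rule out those with $c=n$; in the surviving cases $c\geq n+1$. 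Without this step your $\sigma'$ may not exist as an irreducible curve.

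Second, your very-freeness argument is unreliable. The sheaf $N_{\tilde{\Sigma}/X}$ is only asserted to be locally free along $C$ (Proposition~\ref{prop:Shen}(2)), so your normal bundle sequence along $\sigma'$ is not obviously valid; and the claim that a generic $\sigma'$ makes $N_{\tilde{\Sigma}/X}|_{\sigma'}$ balanced is unsupported --- balanced splitting on a moving curve is not a generic condition without further input. More seriously, in the ``unbalanced regime'' you propose $\phi_*F$ as the very free curve, but since $N_{F/\tilde{\Sigma}}\cong\OO$ is a trivial subbundle of $N_{F/X}$, the image $\phi_*F$ is at best free, never very free; moreover the numerics force $-K_X\cdot\phi_*F\leq 4$, so your ``$\geq 5$'' case is vacuous anyway. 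The paper sidesteps all of this with a much cleaner mechanism established just before the lemma: a curve which is very free \emph{in the surface} $\tilde{\Sigma}$ deforms to a very free curve in $X$, because one may first move $C$ (hence the surface $\Sigma$) to pass through two general points of $X$ and then move the surface-very-free curve to contain them. Once $c\geq n+1$, the class $D+(c-1)F$ is very free in $\FF_n$, and the conclusion follows immediately.
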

\begin{proof}
Let $D$ be the unique (possibly reducible) section such that $D \cdot D=-n, (n \geq 0)$. Write $C=D+cF$. Then $c \geq n$ since $c-n=C \cdot D \geq 0$. In addition,
\[
a=C \cdot C=2c-n \geq c,
\]
and
\[
2c \geq 2c-n=C \cdot C =a.
\]
On the other hand, we have
\[
2c \leq c(-K_X \cdot F) \leq -K_X \cdot C=a+4.
\]
Note that $-K_X \cdot F \geq 2$ since a general fiber $F$ is a free curve. Thus we have the following possibilities.
\begin{enumerate}
\item $a=3, c=3$. Then $-K_X \cdot F=2$ and $-K_X \cdot D=1$. This is impossible since the pseudo-index of the Fano variety $X$ is assumed to be at least $2$.
\item $a=3, c=2$. Since $-K_X \cdot D \geq 2$, $-K_X \cdot F$ is not $3$. Then $-K_X \cdot F=2$ and $n=1, -K_X \cdot D=3$.  
\item $a=4, c=4$. Then $-K_X \cdot F=2$. But this would imply that there are two free curves in $X$ with $-K_X$ degree $2$, each passing through a very general point, intersecting each other. This is impossible if the two points are chosen to be very general.
\item $a=4, c=3$. Then $-K_X \cdot F=2$ and $n=2$.
\item $a=4, c=2$. Then $-K_X \cdot F=2$, $3$ or $4$ and $n=0$. But in this case the section $D$ is a moving curve. So $-K_X \cdot D \geq 2$. Thus $-K_X \cdot F$ cannot be $4$.
\end{enumerate}
In the cases that has not been ruled out, $c\geq n+1$. Thus there is a very free curve (in the surface $\tilde{\Sigma}$) of the class $D+(c-1)F$ since the pair $(\tilde{\Sigma}, C)$ is equivalent to $(\FF_n, \sigma)$ . It deforms to a very free curve in $X$ with smaller $-K_X$ degree.
\end{proof}

So we have proved the following.

\begin{lem}\label{lem:balance}
Let $X$ be a smooth projective Fano $4$-fold with pseudo-index at least $2$. Let $C$ be a very free curve with minimal $-K_X$ degree and general in an irreducible component of the moduli space. Assume that $-K_X \cdot C$ is $7$ or $8$. Then the normal bundle is not $\OO(1)\oplus \OO(1)\oplus\OO(a), a=3$ or $4$.
\end{lem}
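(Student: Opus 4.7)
The plan is to assemble the construction together with the two preceding lemmas into a proof by contradiction. Suppose $C$ is a very free curve of minimal $-K_X$ degree within its irreducible component, with $-K_X \cdot C \in \{7,8\}$, and that $N_{C/X} \cong \OO(1) \oplus \OO(1) \oplus \OO(a)$ for $a \in \{3,4\}$. Then the splitting type is exactly what the construction preceding Proposition \ref{prop:Shen} requires, so I can form the surface $\Sigma$ swept by deformations of $C$ through two fixed points and pass to its normalization $\Sigma'$ and minimal resolution $\tilde{\Sigma}$.

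First I would invoke Proposition \ref{prop:Shen}, which gives that $\tilde{\Sigma}$ is smooth near $C$ with $N_{C/\Sigma'} \cong \OO(a)$ and $N_{\tilde{\Sigma}/X}|_C \cong \OO(1) \oplus \OO(1)$, and which classifies the pair $(\tilde{\Sigma}, C)$ as equivalent either to $(\PP^2, \text{conic})$ or to $(\FF_n, \sigma)$ for some Hirzebruch surface. The first of the two preceding lemmas rules out the $(\PP^2, \text{conic})$ possibility: a line in such a $\PP^2$ would deform to a very free curve in $X$ of $-K_X$ degree at most $4$, contradicting the fact that a very free curve on a smooth $4$-fold has $-K_X$ degree at least $5$. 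Hence the pair must be equivalent to some $(\FF_n, \sigma)$.

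Next I would apply the second preceding lemma, which, together with the running assumption $i(X) \geq 2$ and the avoidance argument for very general points, extracts from the $\FF_n$ case a very free curve on $\tilde{\Sigma}$ of class $D + (c-1)F$ (with $D$ the distinguished section and $F$ a fiber), and then uses the remark after Proposition \ref{prop:Shen} to deform it to a very free curve in $X$ of strictly smaller $-K_X$ degree than $C$. This contradicts the minimality of $-K_X \cdot C$ in its component, completing the proof.

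The statement is really a packaging lemma: the substantive obstacles live in Proposition \ref{prop:Shen} (smoothness of $\tilde{\Sigma}$ along $C$ and the two-type classification) and in the Hirzebruch-case numerology (ruling out all combinations of $(a,c,n,-K_X \cdot F, -K_X \cdot D)$ that would leave no room to decrease $c$ by one while staying effective and very free). Once those are in place, the only thing to verify here is that every hypothesis of the construction is satisfied by an $\OO(1) \oplus \OO(1) \oplus \OO(a)$ curve with $a \in \{3,4\}$ and that both branches of the classification have been closed off, which the two lemmas do.
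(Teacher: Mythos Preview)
Your proposal is correct and follows exactly the paper's approach: the lemma is stated there with the phrase ``So we have proved the following,'' after the construction, Proposition~\ref{prop:Shen}, and the two intermediate lemmas have done all the work. One small wording issue: you write ``the minimality of $-K_X \cdot C$ in its component,'' but the hypothesis is that $C$ has minimal $-K_X$ degree among \emph{all} very free curves on $X$; the contradiction comes from producing any very free curve of smaller degree, not necessarily one in the same component.
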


Now we are ready to finish the proof of Theorem~\ref{thm:lowdegree}.

\begin{proof}[Proof of Theorem~\ref{thm:lowdegree}]
Let $C$ be a very free curve with minimal $-K_X$ degree. If $-K_X \cdot C \leq 7$, then any reducible curve in the curve class $[C]$ and passing through $2$ very general points has at most $3$ irreducible components. If there are $2$ irreducible components, both of them are free. Thus the curve is smoothable to a very free curve. If there are $3$ irreducible components, by Lemma \ref{lem:smoothable},  it is also smoothable. This means that every irreducible component which parametrizes curves passing through two general points has the expected dimension and a general member is an irreducible free curve. Thus the Gromov-Witten invariant $\langle [pt], [pt], [A]^2, \ldots, [A]^2 \rangle^X_{0, \beta}$ is non-zero and enumerative, where $A$ is a very ample divisor.

Now we consider the case $-K_X \cdot C=8$. Since the normal bundle of a general such curve is not $\OO(1)\oplus \OO(1) \oplus \OO(4)$, the deformation of such a curve with two points fixed sweeps out a divisor in $X$. Thus such curves make a positive contribution to the following Gromov-Witten invariant: $\langle [pt], [pt], [A]^3, [A]^2 \rangle^X_{0, C}$. Again $A$ is a very ample divisor as before. We will call the curve and the surface appearing in the constraints $\Gamma$ and $\Sigma$. Our next goal is to prove that such an invariant is non-zero by showing that no reducible curve can contribute to such invariant.

We first choose two very general points. Note that an irreducible rational curve through a general points has $-K_X$ degree at least $2$. By Lemma \ref{lem:smoothable}, we only need to consider the following cases since in all the other cases, the reducible curve lies in an irreducible component whose general point parametrizes a very free curve.
\begin{enumerate}
\item The curve has three irreducible component $C_1, C_2$ and $C_3$ with $-K_X$ degree $2, 4$, and $2$. The degree $2$ curves are free curves.
\item The curve has four irreducible components $C_1$ through $C_4$ with $-K_X$ degree $2$ each. The curves $C_1$ and $C_4$ are free curves. And the deformation of $C_2$ or $C_3$ sweeps out a surface.
\item The curve has four irreducible components $C_1$ through $C_4$ with $-K_X$ degree $2$ each. The curves $C_1$ and $C_4$ are free curves. And the curves $C_2$ and $C_3$ deform in divisors.
\item The curve has four irreducible components $C_1$ through $C_4$ with $-K_X$ degree $2$ each. The curves $C_1$ and $C_4$ are free curves. And either $C_2$ or $C_3$ (or both) deforms to a free curve.
\end{enumerate}

In all cases, the free curve $C_1$ (resp. $C_3$ in case one or $C_4$ in other cases) is fixed and cannot meet a general curve $\Gamma$ once the point is fixed. 

By Lemma \ref{lem:smoothable} and the minimality of the very free curve, $C_1$ and $C_4$ are not connected by a single curve $C_2$ or $C_3$ in the last three cases.  And the curve $C_2$ (resp. $C_3$) is fixed once two points of the curve is fixed by a bend-and-break type argument.  

In the first case, the degree $4$ curve $C_2$ has to meet $\Gamma$. Then the dimension of every irreducible component of the Hom-scheme at $C_2$ is at most $9$. Indeed, if in one irreducible component the deformation of $C_2$ dominates $X$, then that irreducible component has expected dimension $-K_X \cdot C_2+\dim X=8$ since a general deformation is free. Now assume that the deformation is contained in a divisor ( it cannot be contained in a surface since we can choose the two degree $2$ free curves to avoid any finite number of codimension $2$ locus by choosing the two points to be general). Consider the evaluation map
\[
\PP^1 \times \PP^1 \times \PP^1 \times Hom(C_2, X) \to X \times X \times X.
\]
The image in $X\times X \times X$ has dimension $9$. So it suffices to show that the map has fiber dimension $3$. If the fiber dimension is greater than $3$, then we may deform the curve $C_2$ with $3$ points fixed. Then by bend-and-break, it breaks into a reducible or non-reduced curve, still passing through the fixed points. The $3$ points are intersection points of $C_1, C_3$ and $\Gamma$ with the divisor, and thus can be chosen to be general points in the divisor. Then we get a rational curve $D$, necessarily with $-K_X$ degree $2$, passing through $2$ general points in the divisor. The comb consisting of $C_1, C_3$ and $D$ can be smoothed to give a very free curve with smaller $-K_X$ degree by 
 Lemma \ref{lem:smoothable}. This is a contradiction since we assume $C$ is a very free curve with minimal $-K_X$ degree. Thus the Hom-scheme has dimension at most $9$. Then a similar argument using the estimate \ref{eq:1} as in Lemma \ref{lem:smoothable} shows that this reducible curve lies in a component whose general points parametrizes a free curve, in particular, has expected dimension. Thus the first case has no contributions to the Gromov-Witten invariant.

In the remaining cases, note the following.
\begin{lem}
The curve $C_2$ (resp. $C_3$) cannot deform in a surface.
\end{lem}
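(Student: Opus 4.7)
The plan is to derive a contradiction from a straightforward dimension count combined with Mori's bend-and-break, using only the fact that $-K_X\cdot C_2 = 2$ equals the pseudo-index $i(X)$. Let $M_0$ be an irreducible component of the space of unparametrized rational curves on $X$ containing $[C_2]$. Standard deformation theory applied to $\text{Hom}(\PP^1,X)$ at $[C_2]$ gives the lower bound $\dim M_0 \geq -K_X\cdot C_2 + \dim X - 3 = 3$, since the Hom scheme is locally cut out by at most $h^1(\PP^1, f^*T_X)$ equations in a smooth scheme of dimension $h^0(\PP^1, f^*T_X)$, and we then quotient by the $3$-dimensional reparametrization group $\text{Aut}(\PP^1)$.

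Now I would assume for contradiction that the curves parametrized by $M_0$ sweep out only a surface $T\subset X$, and consider the two-pointed universal family $\mcU\times_{M_0}\mcU$ together with its evaluation map to $X\times X$. Its image sits inside $T\times T$, so has dimension at most $4$. The key claim is that the generic fiber of this evaluation is zero-dimensional. Indeed, for a general pair of distinct points $(x_1,x_2)\in T\times T$, a positive-dimensional family of curves in $M_0$ passing through both $x_1$ and $x_2$ would, by Mori's bend-and-break, contain a degeneration to a reducible or non-reduced rational curve still meeting $x_1$ and $x_2$. Every irreducible component of that degeneration has $-K_X$-degree at least $i(X) = 2$, and a non-reduced piece contributes at least $2$ as well, so the total $-K_X$-degree would be at least $4$, contradicting $-K_X\cdot C_2 = 2$. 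Since for each of the finitely many curves through $(x_1,x_2)$ the preimages on the source $\PP^1$ are also finite, the generic fiber of the two-point evaluation is indeed $0$-dimensional.

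It follows that $\dim M_0 + 2 \leq \dim(T\times T) = 4$, i.e.\ $\dim M_0\leq 2$, contradicting the lower bound $\dim M_0 \geq 3$. The identical argument applies verbatim to $C_3$, completing the proof. The one subtle point I expect to check carefully is that the expected-dimension lower bound truly holds for \emph{every} component of the moduli space passing through $[C_2]$ (not merely some favorable one), but this is immediate from the local description of $\text{Hom}(\PP^1,X)$ recalled above, so no substantial obstacle is anticipated.
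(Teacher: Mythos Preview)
Your argument is correct and takes a genuinely different route from the paper's. The paper argues via the ambient chain: if $C_2$ were confined to a surface $T$, then the free curves $C_1,C_4$---which pass through very general points and hence can be chosen to miss any fixed codimension-$2$ locus---would not meet $T$; consequently $C_3$ would have to connect both $C_1$ and $C_4$, and Lemma~\ref{lem:smoothable} applied to the chain $C_1\cup C_3\cup C_4$ would produce a very free curve of $-K_X$-degree $6$, contradicting the minimality of the degree-$8$ very free curve $C$.

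Your approach is more direct and more general: by a pure dimension count plus bend-and-break you show that on \emph{any} smooth Fano $4$-fold of pseudo-index at least $2$, no rational curve of $-K_X$-degree $2$ can have its deformations confined to a surface---independent of any chain structure or minimality hypothesis. In effect you are tightening the estimate already appearing in the proof of Lemma~\ref{lem:smoothable}: there one obtains $\dim_{[f\vert_{C_2}]}Hom(\PP^1,X)\le 2\dim S+1$ when the deformations sweep out $S$, and combining this with the universal lower bound $-K_X\cdot C_2+4=6$ immediately excludes $\dim S=2$. The paper's version has the virtue of following the same template as the next lemma (the divisor case), both exploiting the minimality of $C$; yours is self-contained and avoids that extra input for this step.
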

\begin{proof}
If $C_2$ (resp. $C_3$) deforms in a surface, then it can meet neither $C_1$ nor $C_4$ if we choose the two points to be general. Then $C_3$ (resp. $C_2$) has to connect both $C_1$ and $C_4$. Then by Lemma \ref{lem:smoothable}, there is a very free curve with smaller $-K_X$ degree, which is a contradiction.
\end{proof}
Thus the second case does not contribute to the Gromov-Witten invariant.

\begin{lem}
If $C_2$ (resp. $C_3$) deforms in a divisor, then it cannot pass through two general points in the divisor. In particular, it cannot meet $\Gamma$ and $C_1$ (or $C_4$).
\end{lem}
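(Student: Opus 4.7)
The plan is to argue by contradiction. Suppose $C_2$ passes through two general points of $D$, and let $U$ denote the irreducible component of $\text{Hom}(\PP^1, X)$ containing a parametrization of $C_2$; by the hypothesis that the deformations of $C_2$ sweep $D$, every morphism in $U$ lands in $D$ and the universal evaluation $U \times \PP^1 \to X$ has image $D$. The assumption that $C_2$ meets two general points of $D$ translates into the dominance of the evaluation map $\text{ev}_{0,\infty} : U \to X \times X$, $g \mapsto (g(0), g(\infty))$, onto $D \times D$. The one-dimensional subgroup $\mathbb{G}_m \subset PGL_2$ fixing $\{0, \infty\}$ acts on $U$ by reparametrization and preserves $\text{ev}_{0,\infty}$, so every fiber contains a $\mathbb{G}_m$-orbit of dimension $1$. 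Since $\dim(D \times D) = 6$, this forces $\dim U \geq 7$.

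Next I would apply Mori's bend-and-break. Because $U$ is stable under the $PGL_2$-action on $\PP^1$, the evaluation $\text{ev}_0 : U \to X$ is dominant onto $D$, so its fiber over a general $p \in D$ has dimension at least $\dim U - 3 \geq 4$. The two-dimensional stabilizer $\text{Aut}(\PP^1, 0) \subset PGL_2$ acts on this fiber; the quotient parametrizes unparametrized rational curves in the class $[C_2]$ through $p$ and has dimension at least $2$. Restricting to a one-parameter irreducible subfamily and applying bend-and-break produces a degeneration, through $p$ and in class $[C_2]$, to a reducible or non-reduced rational curve.

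The contradiction then comes from the pseudo-index assumption. Each reduced irreducible component of the limit is a rational curve with $-K_X$-degree at least $i(X) \geq 2$, and since the components weighted by multiplicity sum to $-K_X \cdot C_2 = 2$, the only possibility is a single reduced irreducible component of degree $2$; that is, no genuine degeneration occurs, contradicting bend-and-break. This proves that $C_2$ cannot pass through two general points of $D$.

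For the ``in particular'' statement, for a very general choice of the point through which $C_1$ (or $C_4$) passes, and for a general $\Gamma$ coming from the $[A]^3$ constraint, the finite sets $C_1 \cap D$ and $\Gamma \cap D$ each consist of generic points of $D$. Requiring $C_2$ to pass through one point of each would therefore force $C_2$ through two generic points of $D$, which contradicts the first part. I expect the main subtlety to lie in pinning down the dominance of $\text{ev}_0$ onto $D$ from the $PGL_2$-equivariance of $U$, and in treating both the reducible and the multiple-cover cases in the bend-and-break output uniformly via the pseudo-index bound.
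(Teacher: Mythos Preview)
Your bend-and-break step is the gap. You argue that a one-parameter family of rational curves in class $[C_2]$ through a single fixed point $p$ must degenerate to a reducible or non-reduced curve, but this is false in general: lines through a point in $\PP^n$ form an $(n-1)$-dimensional proper family with no degenerations whatsoever. Bend-and-break with \emph{one} fixed point is not a theorem; the rigidity lemma needs two fixed points. Your dimension count only yields $\dim U \geq 7$, which after fixing two marked points and dividing by $\mathbb{G}_m$ gives a fiber of dimension $\geq 0$ over a general $(p,q)\in D\times D$ --- not enough to force a break.

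There is also a structural warning sign: your argument nowhere uses the ambient hypothesis (implicit in this part of the proof of Theorem~\ref{thm:lowdegree}) that $C$ is a very free curve of \emph{minimal} $-K_X$-degree, equal to $8$. That hypothesis is exactly what the paper exploits. The paper's proof runs as follows: since $C_1$ is free and passes through a very general point of $X$, both $C_1$ and a general deformation $C_1'$ of $C_1$ meet $D$ in general points; if $C_2$ could pass through two general points of $D$, it could be chosen to connect $C_1$ and $C_1'$. The resulting chain $C_1\cup C_2\cup C_1'$ has $-K_X$-degree $2+2+2=6$ and satisfies the hypotheses of Lemma~\ref{lem:smoothable}, so it smooths to a very free curve of degree $6<8$, contradicting minimality. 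The contradiction is with minimality of $C$, not with the pseudo-index bound applied to a putative degeneration of $C_2$.

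Your treatment of the ``in particular'' clause is fine once the first assertion is established.
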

\begin{proof}
Without loss of generality, assume $C_2$ is connected to $C_1$. If $C_2$ can pass through two general points in the divisor, then it can connect both $C_1$ and a general deformation of $C_1$ (Note $C_1$ has positive intersection with the divisor). Then there is a very free curve with smaller $-K_X$ degree by Lemma \ref{lem:smoothable}. Again a contradiction.
\end{proof}

Thus the third case has no contributions either.

In the fourth case, assume the curve $C_2$ deforms to a free curve. In particular, $C_2$ lies in a component of the Kontsevich moduli space of dimension $3$. Meeting a curve is a codimension $2$ condition. So if we choose the points and the curve $\Gamma$ to be general enough, $C_2$ cannot meet both $C_1$ (or $C_4$) and $\Gamma$. Thus we may assume $C_2$ meet the surface $\Sigma$ and $C_1$. Then $C_2$ has to be a general point in the irreducible component of dimension $3$, thus a free curve. Note that $C_3$ cannot deform in a divisor otherwise $C_3$ cannot meet $\Gamma$. That is, $C_3$ deforms to a free curve and thus the dimension of the Hom-scheme has expected dimension at $C_3$. By Lemma \ref{lem:smoothing}, the union $C_2 \cup C_3 \cup C_4$ deforms to an irreducible free curve. But since $C_1$ is free, the union of $C_1, \ldots, C_4$ deforms to an irreducible free curve. So this reducible curve lies in an irreducible component of expected dimension. Then if we choose the constraints to general, such reducible curves will not contribute to the Gromov-Witten invariant.
\end{proof}

We finish this section by another theorem about very free rational curves of low degree. 

\begin{thm}\label{thm:BalancedCurve}
Let $X$ be a Fano $4$-fold and $f: C \to X$ be an embedded rational curve. Assume that the normal bundle of $C$ is isomorphic to $\OO(2) \oplus \OO(2) \oplus \OO(2)$. Then there is a non-zero Gromov-Witten invariant of the form $\langle [pt], [pt], [pt] \rangle^X_{0, C}$ or $\langle [pt], [pt] \rangle^X_{0, \tilde{C}}$.
\end{thm}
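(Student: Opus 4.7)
Since $N_{C/X}=\OO(2)^{\oplus 3}$, we have $f^{*}T_{X}\cong\OO(2)^{\oplus 4}$ and $-K_{X}\cdot C=8$, so the Hom-scheme is smooth at $[f]$ of dimension $12$. Both numerics match the two Gromov-Witten invariants in the statement: $\overline{M}_{0,3}(X,[C])$ has virtual dimension $12=3\dim_{\CC} X$, matching three point constraints, and for any $\tilde C$ with $-K_{X}\cdot\tilde C=5$, $\overline{M}_{0,2}(X,[\tilde C])$ has virtual dimension $8=2\dim_{\CC} X$, matching two point constraints. The strategy is to show that the first invariant is nonzero when the generic stable map through three general points is irreducible, and to extract from a reducible degeneration a very free $\tilde C$ that proves the second invariant nonzero otherwise.

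First I would verify that the evaluation $\overline{M}_{0,3}(X,[C])\to X^{3}$ is dominant. The subscheme of $\mathrm{Hom}(\PP^{1},X)$ of maps sending $0,\infty$ to fixed points $q_{1},q_{2}$ has tangent space $H^{0}(f^{*}T_{X}(-0-\infty))\cong\CC^{4}$ and vanishing $H^{1}$, hence is smooth of dimension $4$. Because $\OO^{\oplus 4}$ is globally generated, the universal sweep $\PP^{1}\times U\to X$ has surjective differential at every point off $\{0,\infty\}$, so its image is four-dimensional. Hence through any three very general points $q_{1},q_{2},q_{3}\in X$ there exists at least one stable map of class $[C]$ passing through them.

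I would then split into cases. In the irreducible case, balancedness of the pullback tangent bundle is an open condition around $[f]$, so a generic stable map $g$ of class $[C]$ through three very general points still satisfies $g^{*}T_{X}\cong\OO(2)^{\oplus 4}$; thus $H^{1}(g^{*}T_{X})=0$ and $H^{0}(g^{*}T_{X}(-3\,\text{pts}))=H^{0}(\OO(-1)^{\oplus 4})=0$. The moduli is smooth and the evaluation transverse at $[g]$, so each such $g$ contributes $+1$, yielding $\langle[pt],[pt],[pt]\rangle^{X}_{0,[C]}>0$. In the reducible case (every stable map of class $[C]$ through three general points is reducible), any irreducible component through a very general point is free, hence of $-K_{X}$-degree at least $2$, so the decomposition $8=\sum -K_{X}\cdot C_{i}$ has only a short list of combinatorial possibilities. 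Combining these with Lemmas~\ref{lem:smoothing} and~\ref{lem:smoothable}, I would extract a free component $\tilde C$ of $-K_{X}$-degree $5$ passing through two of the three very general points. By Remark~\ref{rem:balance}, any $-K_{X}$-degree-$5$ free curve in a $4$-fold is automatically very free with balanced normal bundle $\OO(1)^{\oplus 3}$, and the same global-generation argument as in the previous paragraph (now applied to $N_{\tilde C/X}(-2)\cong\OO(-1)^{\oplus 3}$ replaced by its positive twist ensuring transversality) shows that each such $\tilde C$ contributes $+1$ to $\langle[pt],[pt]\rangle^{X}_{0,[\tilde C]}$, so that invariant is nonzero.

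The principal obstacle is the reducible case. The theorem does not assume a pseudo-index lower bound, so components of $-K_{X}$-degree $1$ may a priori appear in the decomposition, and controlling their combinatorics—ruling out configurations that would prevent extraction of a very free $\tilde C$ rather than merely a free one, and showing that the boundary strata of the moduli lie in components of the expected dimension whose generic point is an irreducible free curve—requires a careful extension of the bend-and-break and smoothing arguments used throughout Section~\ref{sec:2}.
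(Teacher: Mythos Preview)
Your overall strategy matches the paper's, but the reducible case has a genuine gap. First, your dichotomy is miscast: since $N_{C/X}\cong\OO(2)^{\oplus 3}$, deformations of $C$ itself already pass through any three general points, so your ``reducible case'' (no irreducible curve through three general points) is vacuous. The correct question is whether \emph{reducible} stable maps also pass through three very general points and lie in components of excess dimension; if none do, the three-point invariant is enumerative and positive, and otherwise one must produce $\tilde{C}$ from such a configuration.

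The substantive gap is your extraction of $\tilde{C}$. You propose to find a single free component of $-K_X$-degree $5$ among the pieces, but in the bad configurations that actually arise (the paper lists three: degrees $2{+}2{+}2{+}2$, $2{+}2{+}3{+}1$, and $2{+}2{+}2{+}1{+}1$, with the degree-$2$ and degree-$3$ pieces free through the three points) no component has degree $5$. The paper's $\tilde{C}$ is not a component of the degeneration at all. In the $2{+}2{+}2{+}2$ case, when the non-free piece $C_4$ has excess-dimensional Hom-scheme it deforms only in a divisor, and bend-and-break on $C_4$ (fixing two general points of that divisor, namely intersection points with two of the free $C_i$) produces a \emph{new} degree-$1$ curve $D$ connecting, say, $C_1$ and $C_2$; one then sets $[\tilde{C}]=[C_1]+[D]+[C_2]$. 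You have not anticipated this mechanism. Your appeal to Remark~\ref{rem:balance} is also misplaced: that remark concerns the minimal free curve on a Picard-number-$1$ Fano $4$-fold of pseudo-index at least $2$, and does not assert that an arbitrary degree-$5$ free curve on an arbitrary Fano $4$-fold has balanced normal bundle (which is false). Finally, showing $\langle[pt],[pt]\rangle^{X}_{0,[\tilde C]}\neq 0$ still requires analyzing reducible curves in class $[\tilde{C}]$ through two general points; the paper does this explicitly, whereas your ``same global-generation argument'' does not apply, since $\tilde{C}$ is constructed as a reducible curve rather than as an embedded curve with known normal bundle.
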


\begin{proof}
We choose the constraints to be three points. And in view of Lemma \ref{lem:smoothing} and \ref{lem:smoothable}, we only need to consider the following possibilities.
\begin{enumerate}
\item There are four irreducible components $C_1$ through $C_4$ with $-K_X$ degree $2, 2, 2$, and $2$. The curves $C_1, C_2$, and $C_3$ are free and pass through one of the chosen general points.
\item There are four irreducible components $C_1$ through $C_4$ with $-K_X$ degree $2, 2, 3$, and $1$. The curves $C_1, C_2$, and $C_3$ are free and pass through one of the chosen general points.
\item There are five irreducible components $C_1$ through $C_5$ with $-K_X$ degree $2, 2, 2, 1, 1$. The curves $C_1, C_2$, and $C_3$ are free and pass through one of the chosen general points.
\end{enumerate}
In the first case, if the Kontsevich moduli space has the expected dimension at the curve $C_4$, then the union of $C_1, \ldots, C_4$ can be smoothed to a free curve by Lemma \ref{lem:smoothable}. Thus it lies in an irreducible component of expected dimension. So this will not happen if we choose the $3$ points to be general. So the dimension of the Kontsevich moduli space at the point $[C_4]$ has strictly higher dimension than the expected dimension. This means that $C_4$ only deforms in a divisor (it cannot deform in a surface otherwise we can choose the curves $C_1, \ldots, C_3$ to miss it). Then the bend-and-break argument shows that there is a rational curve $D$ of $-K_X$ degree $1$ and passing through $2$ general points in the divisor. It can connect two of the three free curves, say, $C_1$ and $C_2$. Now take the reducible curve $C_1\cup D \cup C_2$. Then the Gromov-Witten invariant $\langle [pt], [pt]\rangle^X_{0, [C_1+C_2+D]}$ is non-zero. In fact, an irreducible curve in this curve class meeting two general points are very free and the moduli space is smooth of expected dimension at this point. And the only possible reducible curve in this class that can pass through $2$ general points is either of the form $C\cup C'$, where $C$ and $C'$ are free curves, or $C+C'+D$, where $C$ and $C'$ are free curves and $-K_X \cdot D=1$. The first case is impossible if we choose the points to be general since every irreducible component of the moduli space has expected dimension at this point. In the second case $D$ only deforms in a divisor ($D$ cannot deform in a surface otherwise we could choose the constraints so that $C$ and $C'$ avoid the surface). Again by bend-and-break, the curve $D$ cannot deform once $C$ and $C'$ are fixed. Thus the moduli space has expected dimension at this point, although possibly non-reduced. So the Gromov-Witten invariant is always positive.

The second case cannot happen if we choose the points to be general. In fact, the degree $1$ curve deforms in a divisor, and it has to connect all the three free curves. By the bend-and-break argument, it cannot deform once the two degree $2$ curves are fixed. But then the degree $3$ curve will miss this curve once the point is chosen to be general.

In the third case one of the degree $1$ curve will connect two degree $2$ curves. Then the union of them gives a non-zero Gromov-Witten invariant of the form $\langle [pt], [pt]\rangle^X_{0, \tilde{C}}$ by the same argument as the first case.
\end{proof}

\section{proof of Theorem~\ref{thm:main}}
Throughout the section, let $X$ be a smooth projective Fano $4$-fold of pseudo-index at least $2$.

The following lemma is taken from \cite{AG_Fano}, Lemma 7.2.16 (ii). The statement is due to Wi{\'s}niewski \cite{Wis} but stated only for Fano fourfold of index $2$. However the same proof works in our setting as well. We reproduce the proof here for the convenience of the reader.

\begin{lem}\label{lem:3}
Let $X \to Y$ be a contraction of a $K_X$-negative extremal face. If there is a $3$-dimensional fiber, then $X$ is a $\PP^1$ bundle over a smooth projective Fano $3$-fold.
\end{lem}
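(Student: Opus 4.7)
The plan is to pick an extremal ray $R$ inside the contracted face whose contraction $\pi_R\colon X\to Y_R$ realizes the given $3$-dimensional fiber $F$, and then to apply the Ionescu--Wi\'sniewski inequality
\[
\dim F + \dim E \;\geq\; \dim X + \ell(R) - 1,
\]
where $E$ is the exceptional locus of $\pi_R$ and $\ell(R)$ is its length. Since the pseudo-index hypothesis gives $\ell(R)\geq i(X)\geq 2$, this yields $\dim E\geq 2$. Combined with $E\supseteq F$ and $\dim F=3$, either $E=X$ (so $\pi_R$ is of fiber type over a smooth curve $Y_R$) or $E$ is a prime divisor contracted to a point with $F=E$.

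I would first rule out the divisorial case by invoking the classification of divisorial contractions of smooth projective $4$-folds due to Andreatta--Wi\'sniewski: for each possibility for $F$ together with its normal bundle $N_{F/X}$, the smoothness of $X$ combined with the pseudo-index constraint $i(X)\geq 2$ are incompatible, because one finds lines in $F$ of $-K_X$-degree $1$.

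In the remaining case, $\pi_R\colon X\to Y_R$ is of fiber type with $Y_R$ a smooth rational curve and general fiber $F$ a smooth Fano $3$-fold of pseudo-index $\geq 2$, covered by an unsplit family of minimal $R$-curves of $-K_X$-degree $\ell(R)$. A short case analysis using the classification of Fano $3$-folds of large pseudo-index identifies $F$ as $\PP^{3}$ or a smooth quadric, each of which carries a well-known covering family of lines of minimal anticanonical degree. I would then argue that this family is rigid enough to extend from one fiber to a global unsplit dominating family of rational curves on $X$, whose numerical class spans a second extremal ray $R'$. The contraction of $R'$ is the desired $\PP^{1}$-bundle $\pi'\colon X\to B$; smoothness and Fano-ness of $B$ follow from smoothness of $X$ and ampleness of $-K_X$ via the standard adjunction for projective bundles.

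The main obstacle I anticipate is this last globalization step: ensuring that the fiberwise ruling in $F$ glues to a global $\PP^{1}$-fibration on $X$ rather than breaking up into lower-degree pieces over special fibers of $\pi_R$. The hypothesis $i(X)\geq 2$ is essential here, since it forbids any degenerations producing length-$1$ curves and therefore keeps the family unsplit across $X$.
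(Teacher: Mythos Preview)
Your approach diverges from the paper's at the very first step, and this leads to real difficulties. You choose an extremal ray $R$ \emph{inside} the contracted face; the paper instead takes an extremal ray $R$ \emph{outside} the face, generated by a rational curve $C$ with $C\cdot D>0$ (where $D$ is the $3$-dimensional fiber). Any $2$-dimensional fiber of the contraction of this outside ray would meet $D$ in a curve lying simultaneously in $R$ and in the face, which is impossible. Hence that contraction has fibers of dimension at most $1$, and one finishes immediately via Ando's classification together with the pseudo-index hypothesis (ruling out the blow-up and the singular conics). The base is then Fano by a result of Wi\'sniewski.

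Your route, by contrast, has two genuine gaps. First, the divisorial case is \emph{not} ruled out by the pseudo-index: for instance $X=\mathrm{Bl}_p\PP^4$ is Fano of pseudo-index $2$, and the blow-down $X\to\PP^4$ contracts $E\cong\PP^3$ with normal bundle $\OO(-1)$, where the lines have $-K_X$-degree $3$, not $1$. Second, and more seriously, in your fiber-type case the curves you propose to globalize---lines inside the fibers of $\pi_R$---are all contracted by $\pi_R$, so their numerical classes lie in $R$ itself. Your ``second extremal ray $R'$'' therefore coincides with $R$, and contracting it returns the map to the curve $Y_R$, not a $\PP^1$-bundle over a $3$-fold. (As a side remark, a smooth Fano $3$-fold of Picard number $1$ and pseudo-index $\geq 2$ need not be $\PP^3$ or a quadric: all the del Pezzo $3$-folds of index $2$ qualify.) The fix is precisely the paper's move: look for the $\PP^1$-bundle structure along a ray transverse to the face rather than inside it.
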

\begin{proof}
Let $D$ be a $3$-dimensional fiber. There is an extremal ray $R$, which does not lie in the extremal face corresponding to the contraction $X \to Y$ and spanned by the class of a rational curve $C$ such that $C \cdot D >0$. Then the corresponding contraction of $R$ have fiber dimension at most $1$. Indeed if there is a $2$-dimensional fiber $E$, then $D \cap E$ is non-empty and has a $1$-dimensional component. But the class of this component has to lie in both $R$ and the extremal face, which is impossible. Then the contraction corresponding to $R$ is either a blow-up along a smooth surface or a conic bundle over a smooth $3$-fold by \cite{Ando}. The former case is impossible because of the assumption on the pseudo-index. For the same reason, the conic bundle is a $\PP^1$-bundle. The base $3$-fold is Fano by a result of Wi{\'s}niewski \cite{WisF} (c.f. Corollary 7.2.13 in \cite{AG_Fano}).
\end{proof}

\begin{lem}\label{lem:2}
Let $X \to Y$ be a contraction of a $K_X$-negative extremal ray. If the map contracts a divisor to a curve, then $Y$ is smooth and $X$ is the blow-up along a smooth curve. Furthermore, $Y$ is a Fano $4$-fold of pseudo-index at least $2$.
\end{lem}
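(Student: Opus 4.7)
The plan is to first determine the geometric structure of the contraction $f \colon X \to Y$ using the pseudo-index hypothesis and a classification theorem for divisorial contractions of smooth $4$-folds, and then to verify the Fano property and the pseudo-index bound on $Y$ by numerical computations based on the blow-up formula $K_X = f^{*}K_Y + 2E$.

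For the structure of $f$, I would first bound the dimensions of the fibers. By Wi{\'s}niewski's fiber-dimension inequality, any irreducible component $F$ of a fiber of $f|_{E}$ satisfies $\dim F + \dim E \geq \dim X + l(R) - 1$, where $l(R) \geq i(X) \geq 2$ denotes the length of the contracted extremal ray. Since $\dim E = 3$ and $\dim X = 4$, this forces $\dim F \geq 2$; combined with the general fiber having dimension $\dim E - \dim B = 2$, every fiber of $f|_{E}$ is exactly two-dimensional. With this equidimensionality, I would apply the classification of divisorial Mori contractions of smooth $4$-folds (in the tradition of Ando and Andreatta--Wi{\'s}niewski), together with the Nakano--Fujiki contractibility criterion, to conclude that $f$ is a smooth blow-up along a smooth curve $B \subset Y$, with $Y$ smooth and $E \cong \PP(N_{B/Y})$ a $\PP^{2}$-bundle over $B$. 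To see that $Y$ is itself Fano, I would invoke Wi{\'s}niewski's theorem that an elementary Mori contraction from a Fano manifold has Fano image.

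To bound the pseudo-index of $Y$ below by $2$, let $C' \subset Y$ be a rational curve. If $C' \not\subset B$, its strict transform $\tilde{C} \subset X$ is rational and does not lie in $E$, so $E \cdot \tilde{C} \geq 0$ and hence $-K_Y \cdot C' = -K_X \cdot \tilde{C} + 2 E \cdot \tilde{C} \geq i(X) \geq 2$. The subtler case is $C' = B$, which forces $B \cong \PP^{1}$. Writing $N_{B/Y} \cong \OO(a) \oplus \OO(b) \oplus \OO(c)$ with $a = \min(a, b, c)$, I would take the section $\tilde{C}$ of $E \to B$ corresponding to the quotient $N_{B/Y} \to \OO(a)$, for which $E \cdot \tilde{C} = -a$ and $-K_X \cdot \tilde{C} = -K_Y \cdot B + 2a$. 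The pseudo-index bound for $X$ gives $-K_Y \cdot B + 2a \geq 2$, and combined with the adjunction identity $-K_Y \cdot B = a + b + c + 2$ this yields $3a + b + c \geq 0$; since $b, c \geq a$, one concludes $a + b + c \geq 0$, and hence $-K_Y \cdot B \geq 2$. The main obstacle is the first step: invoking the precise structure theorem for equidimensional divisorial contractions of smooth $4$-folds with one-dimensional image. Once that structure is in place, the Fano and pseudo-index conclusions follow from routine adjunction and projection-formula manipulations.
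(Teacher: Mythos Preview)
Your overall strategy coincides with the paper's, but there are two genuine gaps.

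First, equidimensionality of the fibres does not by itself force a smooth blow-up. The classification you allude to (Andreatta--Wi{\'s}niewski; quoted in the paper via \cite{AMContraction}, Theorem~4.1.3) already lists three equidimensional possibilities for a divisor-to-curve contraction on a smooth $4$-fold: a $\PP^2$-bundle whose fibres have normal bundle $\OO(-1)\oplus\OO$ (the smooth blow-up), a $\PP^2$-bundle with fibre normal bundle $\OO(-2)\oplus\OO$, and a quadric bundle with fibre normal bundle $\OO(-1)\oplus\OO$. In the latter two cases a line $\ell$ in a fibre satisfies $-K_X\cdot\ell=1$, so it is the hypothesis $i(X)\ge 2$---not Nakano--Fujiki---that eliminates them. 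You name the pseudo-index as an input but never actually use it at this step; the Wi{\'s}niewski fibre-dimension inequality is therefore redundant and the real work is left undone.

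Second, your intersection number $E\cdot\tilde C=-a$ (with $a=\min(a,b,c)$) is wrong, and with it the whole computation for $C'=B$. For the blow-up one has $E\cong\PP(N_{B/Y}^{\,*})$ in Grothendieck's sense, so sections of $E\to B$ correspond to sub-line-bundles $L\hookrightarrow N_{B/Y}$, and $E\cdot\tilde C=\deg L$; the summand sections give $E\cdot\tilde C\in\{a,b,c\}$, never $-a$. (Sanity check: for the blow-up of a line in $\PP^4$ one has $a=b=c=1$ and every section has $E\cdot\tilde C=+1$.) To get a useful constraint you must choose the \emph{largest} summand, exactly as the paper does: with $a\ge b\ge c$ and $\tilde C$ the section from $\OO(a)\hookrightarrow N_{B/Y}$, one obtains $-K_X\cdot\tilde C=2+b+c-a\ge 2$, hence $c\ge 0$, hence $-K_Y\cdot B=2+a+b+c\ge 2$. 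Choosing the smallest summand yields the weakest constraint $a+b\ge c$, which is automatic and proves nothing; your inequality $3a+b+c\ge 0$ is internally consistent with the premise $E\cdot\tilde C=-a$, but that premise is false.
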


\begin{proof}
Here we need the description of extremal contractions as in \cite{AWContraction2}, see also \cite{AMContraction} for the summary of fourfold contractions. By Theorem 4.1.3 in \cite{AMContraction}, we have the following possibilities.
\begin{enumerate}
\item The exceptional divisor $E$ is a $\PP^2$-bundle and the normal bundle of each fiber in $X$ is
either $\OO(-1)\oplus \OO$ or $\OO(-2) \oplus \OO$. When the normal bundle is $\OO(-1)\oplus \OO$, the contraction $X\to Y$ is the inverse of the blow-up of a smooth curve in a smooth variety $Y$.
\item The exceptional divisor $E$ is a quadric bundle and the general fiber is irreducible and
isomorphic to a two dimensional, possibly singular, quadric. The normal bundle of each fiber is
$\OO(-1)\oplus \OO$.
\end{enumerate}
Since the pseudo-index of $X$ is at least $2$, only the first case with normal bundle $\OO(-1) \oplus \OO$ can occur. So $Y$ is smooth and the map from $X$ to $Y$ is just the inverse of the blow-up along a smooth curve $C$.  Let $D$ be an irreducible curve in $Y$ not supported on $C$ and $\tilde{D}$ its strict transform in $X$. Then 
\[
-K_Y\cdot D = -K_X \cdot \tilde{D}+E \cdot \tilde{D}\geq -K_X \cdot \tilde{D}>0.
\] 
Since $-K_X$ is ample, $(-K_X)^3 \cdot E>0$. Thus $-K_Y \cdot C > 4g(C)-4$. So $Y$ is Fano with pseudo-index at least $2$ if $C$ is not a rational curve. If $C$ is rational, assume the normal bundle of $C$ in $Y$ $N_{C/Y}$ is isomorphic to $\OO(a)\oplus \OO(b) \oplus \OO(c)$ with $a\geq b\geq c$. Then $E \cong \PP_C(\OO(a)\oplus \OO(b) \oplus \OO(c))$. Let $\tilde{C}\subset E $ be the section corresponding to the inclusion $\OO(a) \to \OO(a)\oplus \OO(b) \oplus \OO(c)$. Then
\[
2 \leq -K_X \cdot \tilde{C}=2+a+b+c-2a.
\]
Thus $2+c \geq 2+b+c-a \geq 2$, and $a\geq b \geq c \geq 0$. Therefore $Y$ is also a Fano variety of pseudo-index at least $2$.
\end{proof}

\begin{lem}\label{lem:birational}
Let $X$ be a Fano $4$-fold of pseudo-index at least $2$ and $f:Y \to X$ a birational morphism. Assume that $X$ has a non-zero Gromov-Witten invariant with two point-insertions which counts irreducible very free rational curves. Then $Y$ also has a non-zero Gromov-Witten invariant with two point-insertions which counts irreducible very free rational curves.

\end{lem}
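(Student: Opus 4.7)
The plan is to transfer the enumerative count from $X$ to $Y$ by taking strict transforms of very free curves, then set up a matching Gromov--Witten invariant on $Y$ and verify it is nonzero. Since $f$ is a birational morphism between smooth $4$-folds, its exceptional locus is a divisor $E \subset Y$. Choose two very general points $y_1, y_2 \in Y \setminus E$ and set $x_i := f(y_i)$; these are very general points of $X$. By hypothesis, the invariant $\langle [pt], [pt], A_1, \ldots, A_n \rangle^X_{0,\beta}$ is nonzero and is computed by a positive finite count of irreducible very free rational curves $C_1, \ldots, C_N$ through $x_1, x_2$ meeting general representatives $\Gamma_i$ of $A_i$. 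Being general, each $C_j$ meets $f(E)$ transversely (and away from $x_1, x_2$), so its strict transform $\tilde{C}_j \subset Y$ is an irreducible rational curve passing through $y_1, y_2$.

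The next step is to verify that each $\tilde{C}_j$ is very free on $Y$ in a well-defined numerical class $\beta'$. The class $\beta' := [\tilde{C}_j]$ is independent of the general choice of $C_j$: on the one hand $f_*\beta' = \beta$, and on the other hand the intersection numbers of $\tilde{C}_j$ with each irreducible component of $E$ are determined by the transverse intersection of $C_j$ with the center of the corresponding blow-up component, which is deformation-invariant. Writing $K_Y = f^*K_X + D$ with $D$ effective supported on $E$, the bundle $N_{\tilde{C}_j/Y}$ agrees with $f^*N_{C_j/X}$ away from $E$ and is only twisted down by a bounded effective amount at the finitely many meeting points; since $C_j$ is very free and $\tilde{C}_j$ passes through two very general points of $Y$, one checks that $T_Y|_{\tilde{C}_j}$ is ample. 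To construct the GW invariant on $Y$, take $A'_i$ to be the classes represented by strict transforms of the $\Gamma_i$ (equivalently, by $f^* A_i$ corrected by classes supported on $E$) and adjusted so that the total expected dimension matches the drop $-D \cdot \beta' \le 0$ coming from the discrepancy; then $\Gamma'_i \cap \tilde{C}_j$ is in bijection with $\Gamma_i \cap C_j$, and each $\tilde{C}_j$ contributes positively to $\langle [pt], [pt], A'_1, \ldots, A'_n \rangle^Y_{0,\beta'}$.

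It remains to rule out spurious contributions from reducible stable maps on $Y$ in class $\beta'$ satisfying the constraints. Any such map pushes forward to a reducible stable map of class $\beta$ on $X$ through $x_1, x_2$ and the $\Gamma_i$, modulo components contracted by $f$ to points on the centers of $f$. By the enumeratvity hypothesis on $X$, the push-down lies in a moduli component of expected dimension; one then argues that the additional freedom afforded by contracted exceptional components does not produce components of excess dimension, using that the centers of $f$ are rigid relative to the very general constraints and that pseudo-index $\geq 2$ on $X$ prevents low-degree rational curves on centers from glueing freely. I expect this last step, namely controlling the new reducible configurations on $Y$ involving $f$-exceptional components, to be the main obstacle: unlike on $X$, the Kontsevich moduli space on $Y$ admits families of stable maps that contract positive-dimensional loci to points, and showing these lie in components of expected dimension requires a careful case-by-case bend-and-break argument exploiting the pseudo-index hypothesis.
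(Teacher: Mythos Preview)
Your approach has a genuine gap, and it is precisely the one you flag at the end: you do not rule out reducible stable maps on $Y$ with components lying in (or contracted into) the exceptional locus $E$. Your sketch for this step invokes ``a careful case-by-case bend-and-break argument exploiting the pseudo-index hypothesis,'' but no such argument is supplied, and it is not clear one is available in this generality: the morphism $f$ is an arbitrary birational morphism onto $X$, so $E$ can be quite complicated, and the pseudo-index hypothesis is on $X$, not on $Y$.

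The paper's proof sidesteps all of this with one observation you are missing: because $f$ is a birational morphism between \emph{smooth} varieties, the image $f(E)$ has codimension at least $2$ in $X$. Hence a general very free curve of class $\beta$ can be deformed to miss $f(E)$ entirely, and the auxiliary constraints (complete intersections of very ample divisors) can likewise be chosen disjoint from $f(E)$. With these choices, every curve on $X$ meeting the constraints lies in the open set $U = X \setminus f(E)$ over which $f$ is an isomorphism, so its preimage in $Y$ is an irreducible very free curve in a single class $\beta'$ (with $\beta' \cdot E_i = 0$ for every exceptional component). Conversely, any stable map on $Y$ of class $\beta'$ meeting the constraints pushes forward to a stable map on $X$ meeting the constraints, which by the enumerativity hypothesis is one of the irreducible curves in $U$; connectedness then forces the $Y$-curve to have no components in $E$ at all. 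Thus the two moduli problems are literally identified via $f$, and the invariant on $Y$ equals the one on $X$.

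In short, by allowing your curves to meet $f(E)$ transversely you created three unnecessary difficulties (well-definedness of $\beta'$, control of $N_{\tilde C_j/Y}$, and spurious exceptional components), the last of which you could not close. Once you use that $f(E)$ has codimension $\geq 2$ and move everything off it, all three disappear simultaneously and the proof is a few lines.
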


\begin{proof}
The images under $f$ of the exceptional divisors have codimension at least 2 in $X$. Thus the very free curve in $X$ can be deformed away from them. We can also choose the constraints to be away from the centers. Then the very free curves in $X$ meeting all the constraints are all away from the images of exceptional divisors. Observe that the image of any curve satisfying the constraints in $Y$ also satisfies the constraints in $X$. Thus the images are irreducible curves not intersecting the exceptional locus. Then it follows that no components are contracted by the map $f: Y \rightarrow X$ and the curves in $Y$ that can meet these constraints are again irreducible very free curves.
\end{proof}

Finally we need the following result, which is a special case of \cite{SymGHS} Theorem 1.14.

\begin{thm}\label{thm:conic}
Let $Z$ be a Fano threefold and $X\rightarrow Z$ be a morphism whose general fiber is $\PP^1$. Then $X$ is has a non-zero Gromov-Witten invariant with two point insertions.
\end{thm}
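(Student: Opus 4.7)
The plan is to reduce this statement to the symplectic Graber--Harris--Starr theorem in dimension three, which is already available in the literature, and to arrange the constraints so that the resulting invariant counts curves that are genuine sections plus fiber components.

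First, I would invoke that a Fano threefold is rationally connected (Koll\'ar--Miyaoka--Mori), and in fact, by the threefold case of Koll\'ar's conjecture already established in \cite{SymRC}, $Z$ carries a non-zero genus-zero Gromov--Witten invariant of the form $\langle [pt],[pt],A_1,\ldots,A_k\rangle^Z_{0,\gamma}$ for a suitable class $\gamma \in H_2(Z,\ZZ)$ and cohomology classes $A_i$ (which can be taken to be powers of a very ample class). Pull all these constraints back via $\pi: X \to Z$; since $\pi$ is a morphism with one-dimensional general fiber, point classes in $Z$ pull back to classes represented by general fibers of $\pi$.

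Second, I would produce the candidate curve class on $X$. Pick two general points $p_1,p_2\in X$ with images $q_1,q_2\in Z$, and choose a very free rational curve $C\subset Z$ through $q_1, q_2$. The restriction $X_C := \pi^{-1}(C) \to C$ is a ruled surface over $\PP^1$ (after discarding possibly finitely many singular fibers, which one avoids by generality of $C$). This ruled surface admits rational sections through any prescribed pair of points in two distinct fibers provided the section class has high enough self-intersection, which one arranges by replacing $\gamma$ by a sufficiently large multiple (or by attaching extra free teeth to $C$ to increase its degree before lifting). Let $\tilde\beta \in H_2(X,\ZZ)$ be the class of such a section; it is a rational curve in $X$ joining $p_1$ and $p_2$.

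Third, I would show $\langle [pt],[pt], \pi^*A_1,\ldots,\pi^*A_k\rangle^X_{0,\tilde\beta}\neq 0$. The key observation is that any stable map in class $\tilde\beta$ decomposes under $\pi$ into a horizontal part mapping to a curve in class $\gamma$ on $Z$ together with some vertical (fiber) components; the fiber components are automatically disjoint from constraints pulled back from $Z$ except at the required incidence points. Thus a standard smoothing argument in the spirit of Lemma \ref{lem:smoothing}, combined with the non-vanishing on $Z$, reduces the count on $X$ to the count on $Z$ times the contribution of a section of a ruled surface through two points, which is $1$.

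The main obstacle is the usual one for GHS-type lifting statements in Gromov--Witten theory: controlling bubbling into multiple-cover contributions on fiber classes, and making sure the symplectic (rather than algebraic) invariants are genuinely enumerative. This is precisely the content of \cite{SymGHS}, Theorem 1.14, which is built to handle exactly this lifting situation, so I would ultimately appeal to that theorem rather than redo the transversality analysis from scratch.
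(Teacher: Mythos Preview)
The paper does not give a proof of this theorem at all: it is stated immediately after the sentence ``Finally we need the following result, which is a special case of \cite{SymGHS} Theorem 1.14,'' and is simply imported as a black box. Your proposal ends in exactly the same place, appealing to \cite{SymGHS}, Theorem 1.14, in the final paragraph. So you and the paper agree on the actual argument.

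Your intermediate sketch (lift a very free curve on $Z$ to a section of the ruled surface $X_C$, pull back constraints, and compare the invariants) is a reasonable heuristic for why the result should hold, but it is not a proof on its own, and you correctly identify why: the reduction ``count on $X$ equals count on $Z$ times $1$'' is exactly where bubbling in fiber classes and transversality issues enter, and that is the substance of \cite{SymGHS}. A couple of the intermediate moves are also loose as written---replacing $\gamma$ by a large multiple would change the invariant on $Z$ you started with, and the claim that the section-through-two-points contribution is literally $1$ needs justification---but since you ultimately defer to \cite{SymGHS} rather than relying on these steps, this does not affect the correctness of your final answer.
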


\begin{proof}[Proof of Theorem \ref{thm:main}]
First consider the case when $X$ has Picard number $1$. Let $C$ be a minimal free curve. If $-K_X \cdot C=2$, then there is a very free curve of $-K_X$ degree $8$ by the quotient construction in \cite{Kollar96}, Chap. IV, Theorem 4.13. If $-K_X \cdot C=4$, then there is a very free curve of $-K_X$ degree $8$ by deforming two such curves. If $-K_X \cdot C=5$, the $N_{C/X} \cong \OO(1)\oplus \OO(1) \oplus \OO(1)$ and $C$ is very free. In any case, we have a very free curve of $-K_X$ degree at most $8$ and the statement follows from Theorem \ref{thm:lowdegree}.

If $-K_X \cdot C=3$, then the quotient construction (loc. cit.) gives a very free curve of $-K_X$ degree $9$. Note that in this case any reducible curve in this curve class that passes through two general points can be smoothed to a very free curve by Lemma \ref{lem:smoothable}. So every irreducible component of the moduli space of rational curves in this curve class containing $2$ general points has the expected dimension. And the contribution to the Gromov-Witten invariant $\GWpp$ is positive. Thus the invariant is non-zero.

Now consider the case when $X$ has higher Picard number. By Lemma \ref{lem:3} and Theorem \ref{thm:conic}, we may assume that there is no contraction of fiber dimension $3$. Note that there is no birational contraction of a divisor to a surface by the assumption on the pseudo-index. Later, all the Gromov-Witten invariants we proved to be non-zero will be the actual count of irreducible very free curves passing through $2$ general points and other constraints. Thus by Lemma \ref{lem:2} and \ref{lem:birational}, we can also assume that there is no birational contraction of a divisor to a curve. There are no flips since the pseudo-index is at least $2$.

So from now on we will assume all the contractions are of fiber type and have fiber dimension at most $2$. There are two cases. 
\begin{enumerate}
\item All fiber type contractions are del Pezzo fibrations.
\item There exist contractions whose general fibers are isomorphic to $\PP^1$.
\end{enumerate}

In the first case, let $F_1$ and $F_2$ be general fibers of two different contractions. The surfaces $F_1$ and $F_2$ are del Pezzo surfaces. Thus there is a rational curve $C_i$ in $F_i$, such that $N_{C_i/X}\cong \OO(a_i) \oplus \OO \oplus \OO, 2 \geq a_i \geq 1$ for $i=1, 2$. The two surfaces $F_1$ and $F_2$ have to intersect at finitely many points. So we can glue deformations of the curves $C_1$ and $C_2$ at one of the intersection points. We may choose the two surfaces to be general so that these deformations of $C_1$ and $C_2$ are still irreducible free curves and passing through a general point in $X$. Then a general deformation of this reducible curve is a very free curve of $-K_X$ degree at most $8$.

In the second case, choose an extremal ray $R$ whose corresponding contraction has relative dimension $1$. Let $C$ be a free curve whose curve class lies in $R$. There is a contraction $X \to B$ that contracts all the other extremal rays except the extremal ray $R$. Note that the base $B$ has Picard number $1$.

If the base has dimension $1$, then a general fiber is a Fano $3$-fold. It is show in \cite{SymRC} that every Fano $3$-fold has a very free curve with anti-canonical degree at most $6$. Then it is easy to see that there is a very free curve of $X$ with $-K_X$ degree at most $8$ by smoothing the union of a free curve whose class lies in $R$ and a very free curve in the fiber. 

Now consider the case $\dim B=2$. If there is a $3$-dimensional fiber $F$ of the contraction, then the result follows from Lemma \ref{lem:3} and Theorem \ref{thm:conic}. So from now on assume that all fibers have dimension $2$. Then the image of a general deformation of $C$ lies in the smooth locus of $B$. Thus the image is a Cartier divisor, necessarily ample. Take two general such curves $C_1$ and $C_2$, each containing a general point. Their images in $B$ intersect at points whose fibers are smooth del Pezzo surfaces. Take a curve $D$ which is a very free curve in the fiber with $-K_X$ degree at most $4$. 
Then there is a very free curve of $-K_X$ degree at most $8$ by smoothing the reducible curve $C_1 \cup D \cup C_2$. 

If $\dim B=3$, then the contraction is given by another extremal ray $R'$ and $X$ has Picard number $2$. We have two families of free rational curves $U \leftarrow \mathcal{C} \to X$ and $U' \leftarrow \mathcal{C'} \to X$ such that a general member of $\mathcal{C}$ resp. $\mathcal{C}'$ is a rational curve whose curve class lies in $R$, resp. $R'$. Now consider the flat quotient of $X$ by the relation generated by these two families as in Theorem 4.13, Chapter IV of \cite{Kollar96}. By this theorem, there is a dominant rational map $X \dashrightarrow Z$ such that two general points in the fiber are connected by a chain of curves in the two families with length at most $4$. Since $X$ has Picard number $2$ and the rational map contacts both curves in $R$ and $R'$, $Z$ is forced to be a point. Therefore, by smoothing the reducible curve connecting two general points, we get a very free curve with $-K_X$ degree at most $8$. 

In any case, there is a very free curve of $-K_X$ degree at most $8$ and the theorem follows from Theorem \ref{thm:lowdegree}.
\end{proof}

\section{Extremal rays and Gromov-Witten invariants}
In this section, let $X$ be a smooth projective variety of dimension $4$ and $X \to Y$ be the contraction of a $K_X$-negative extremal ray.

\subsection{Fano fibrations}

In this case, a general fiber $F$ is a smooth Fano variety. Then choose a minimal free rational curve $C$ in a general fiber. Then there is a non-zero Gromov-Witten invariant of the form $\langle [pt], \ldots \rangle^X_{0, [C]}$ (Theorem 4.2.10 \cite{KollarUni}, Proposition G, Proposition 4.9 \cite{RuanUni}). 

One can even do better. By the result of \cite{SymRC}, every Fano variety of dimension at most $3$ has a non-zero Gromov-Witten invariant of the form $\GWpp$. Then let $A$ be a very ample divisor and choose the curve class to the image of $\beta$ under the inclusion of $F$ into $X$. Then it is easy to see that there is a non-zero Gromov-Witten invariant of the form $\langle [pt], [A]^{d-1}, \ldots \rangle_{0, \beta}^X$, where $d$ is dimension of a general fiber.

\subsection{Flip}
By Theorem 1.1 of \cite{KawamataSmallContraction}, the exceptional locus $E$ is a disjoint union of $\PP^2$s and the normal bundle of each irreducible component
is $\OO(-1)\oplus \OO(-1)$. Let $L$ be a line in $E\cong \PP^2$. Then $\langle [E], [E] \rangle^X_{0, L}$ is the number of irreducible components.

\subsection{Divisor to a surface} 
In this case a general fiber $F$ of the exceptional divisor $E$ is a $\PP^1$ whose normal bundle is $\OO\oplus \OO \oplus \OO(-1)$. Then $\langle [F] \rangle^{X}_{[F], 0}=F \cdot E =-1$.
\subsection{Divisor to curve}
We know that the curve in $Y$ is smooth and $X$ is the blow-up of $Y$ along the curve. There are $3$ possibilities (c.f. Thereom 4.1.3 part 3, \cite{AMContraction}).
\begin{enumerate}
\item The exceptional divisor $E$ is a $\PP^2$-bundle and the normal bundle of each fiber in X is
$\OO(-1)\oplus \OO$.
\item The exceptional divisor $E$ is a quadric bundle and the general fiber is irreducible and
isomorphic to a two dimensional, possibly singular, quadric. The normal bundle of each fiber is
$\OO(-1)\oplus \OO$.
\item The exceptional divisor $E$ is a $\PP^2$-bundle and the normal bundle of each fiber in X is
$\OO(-2)\oplus\OO$.
\end{enumerate}
Let $L$ be a line in a general fiber of $E$. Then it is easy to check that in the first case, $\langle [L], [L] \rangle^X_{0, [L]}=1$. In the second case $\langle [L] \rangle^X_{0, [L]}=-2$. The third case need an excess intersection calculation, which is essentially done in \cite{RuanExtremal} since locally the variety $Y$ is the product of a smooth curve with a singular $3$-fold considered by Ruan. By the proof of Theorem 5.1 (loc. cit.), in particular the computations on p. 422, we know 
$\langle [L] \rangle^X_{0, [L]}=2$.
\subsection{Divisor to point}
This is the most complicated case. And the results are not needed elsewhere in the paper. The classification is in \cite{FujitaSingularDelPezzo}, \cite{FuitaClassification}, \cite{BContraction}, and \cite{BContraction2}. We summarize as follows.

\begin{enumerate}
\item The exceptional divisor $E$ is $P^3$, with normal bundle
$\OO(-a)$ and $1\leq a \leq 3$.
\item  The exceptional divisor $E$ is a (possibly singular) three dimensional quadric,
with normal bundle $\OO(-a)$ and $1 \leq a \leq2$.
\item The pair $(E; E\vert_E)$ is a  (possibly singular) del Pezzo threefold, i.e. $\OO_E(-E)$ is ample and $-K_E\cong \OO_E(-2E)$. For the classification, see \cite{FujitaSingularDelPezzo}.
\item The exceptional divisor is non-normal.
\end{enumerate}
\begin{rem}
The first two cases are in Proposition 2.4 of \cite{BContraction}. Also it is conjectured the non-normal case does not occur (Remark 4.1.4, \cite{AMContraction}). It seems to the author that the list in Supplement 2.6 of \cite{BContraction} has overlooked the cases where the del Pezzo threefolds are cones over a singular del Pezzo surface or an elliptic curve. 
\end{rem}
\begin{prop}\label{case1}
Let $L$ (resp. $H$) be a line (resp. hyperplane) in $E\cong \PP^3$. Assume $\OO_E(E) \cong \OO(-a)$. Then
\begin{enumerate}
\item If $a=1$, $\langle [L], [L] \rangle^X_{0, [L]}=1$. 
\item If $a=2$, $\langle [L], [H] \rangle^X_{0, [L]}=-4$.
\item If $a=3$, $\langle [L] \rangle^X_{0, [L]}=-6$.
\end{enumerate}
\end{prop}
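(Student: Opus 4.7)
The plan is to reduce each invariant to an excess-intersection calculation on the smooth moduli space of lines in $\PP^3$. First, since $E \cdot [L] = -a < 0$, every stable map of class $[L]$ must factor through $E$: otherwise $f^*E$ would be an effective divisor on $C$ of negative degree. Hence $\overline{M}_{0,n}(X,[L])$ coincides with $\overline{M}_{0,n}(\PP^3, [\mathrm{line}])$ as a Deligne-Mumford stack, smooth of dimension $n+4$. Comparing the $X$- and $E$-obstruction theories via the normal-bundle sequence $0 \to T_E \to T_X|_E \to N_{E/X} \to 0$ identifies the excess bundle as $\mcE := R^1\pi_* f^* \OO_E(-a)$, locally free of rank $a-1$ (since $H^0(\PP^1, \OO(-a)) = 0$ and $\dim H^1(\PP^1, \OO(-a)) = a-1$), yielding the virtual class
\[
[\overline{M}_{0,n}(X,[L])]^{\mathrm{vir}} = c_{a-1}(\mcE) \cap [\overline{M}_{0,n}(\PP^3, [L])].
\]

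Next transport the insertions via $i : E \hookrightarrow X$. Writing $h = c_1(\OO_{\PP^3}(1))$ and $[L]_X = i_*(h^2)$, $[H]_X = i_*(h)$, the self-intersection formula $i^* i_*(\alpha) = c_1(N_{E/X}) \cdot \alpha = -a h \cdot \alpha$ gives $i^*[L]_X = -a h^3$ and $i^*[H]_X = -a h^2$. Each invariant therefore becomes an explicit integral on $\overline{M}_{0,n}(\PP^3, [L])$. For $a=1$ the obstruction class is trivial and the invariant reduces to $(-1)^2 \langle h^3, h^3 \rangle^{\PP^3}_{0, \mathrm{line}} = 1$, the classical count of the unique line through two general points. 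For $a = 2, 3$ one pulls back along the forgetful map $\overline{M}_{0,n}(\PP^3, [L]) \to G(2,4)$ and identifies $\mcE$ using relative Serre duality on the universal line $\pi : \PP(S) \to G(2,4)$: with $S$ the tautological rank-two subbundle and $\omega_\pi = \OO_{\PP(S)}(-2) \otimes \pi^* \det S^*$, one obtains $\mcE \cong \det S$ for $a=2$ and $\mcE \cong S \otimes \det S$ for $a=3$. The remaining computations are standard Schubert calculations on $G(2,4)$, using the $\PP^1$-bundle push-forwards $\pi_*\zeta = 1$, $\pi_* \zeta^2 = \sigma_1$, $\pi_*\zeta^3 = \sigma_2$ (with $\zeta = c_1(\OO_{\PP(S)}(1))$) combined with Pieri.

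The main obstacle is book-keeping: tracking signs and twists in the identification of $\mcE$, computing $c_2(S \otimes \det S)$ via the rank-two Chern formula, and pairing the result against $\mathrm{ev}^* h^3$ on the five-dimensional $\PP(S)$. A useful sanity check is that the $a = 1, 2$ answers agree with the dimension-count and divisor-axiom expectations, and the $a = 3$ computation is structurally parallel to the threefold divisor-to-curve excess intersection treated in \cite{RuanExtremal}, p.~422; a trivial transverse factor there recovers the $4$-fold contribution after the sign-tracking outlined above.
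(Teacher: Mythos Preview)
Your proposal is correct and follows the same overall framework as the paper: both reduce to an excess-intersection computation on $G(2,4)$ with the same obstruction bundle (your $\mcE=R^1\pi_*f^*\OO_E(-a)$ agrees with the paper's $R^1\pi_*N$, since $R^1\pi_*$ of the relative normal bundle of the universal line in $\PP^3$ vanishes). The difference lies in how the Euler class of the obstruction bundle is obtained. The paper computes $\mathrm{ch}(\pi_![N])$ via Grothendieck--Riemann--Roch and subtracts $\mathrm{ch}(TG(2,4))$ to isolate $\mathrm{ch}(Obs)$, then reads off the Euler class. You instead identify $\mcE$ explicitly by relative Serre duality as $\det S$ (for $a=2$) and $S\otimes\det S$ (for $a=3$), from which $c_1=-\sigma_1$ and $c_2=\sigma_{1,1}+2\sigma_1^2=2\sigma_2+3\sigma_{1,1}$ drop out immediately; the final Schubert pairings then match the paper's $(-2\sigma_2)(-2\sigma_1)(-\sigma_1)=-4$ and $(-3\sigma_2)(2\sigma_2+3\sigma_{1,1})=-6$. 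Your route is shorter and more transparent, at the cost of requiring care with the projective-bundle convention (you correctly use the subspace convention, so that $\pi_*\OO_{\PP(S)}(1)=S^*$); the paper's GRR approach is convention-agnostic but computationally heavier. Your handling of the insertions via $i^*i_*(\alpha)=-ah\cdot\alpha$ also makes the source of the factors $-a$ explicit, which the paper leaves implicit.
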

\begin{proof}
The first case is obvious.
 
In the second and third case, the moduli space $\overline{M}_{0, 0}(X, [L])$ is isomorphic to $G(2, 4)$ and the universal family is $\mathcal{C} \cong \PP(S) \to G(2, 4)$, where $\pi: S \to G(2, 4)$ is the universal rank $2$ subbundle. And the natrual map $ev: \mathcal{C} \to \PP^3\cong E \subset X$ is the evaluation map. We have the following exact sequences:
\[
0 \to T_E \to T_X \to \OO_E(-a) \to 0,
\]
\[
0 \to T_\pi \to ev^* T_X \to N \to 0,
\]
where $T_\pi$ is the relative tangent sheaf.
And the obstruction bundle $Obs$ is $ R^1\pi_*(ev^*N))$.
We need to know the Euler class of the obstruction bundle. Note that $TG(2,4) \cong R^0\pi_*(N)$. Thus the result follows from a routine calculation using Grothendieck-Riemann-Roch. Denote by $h$ the hyperplane class of $\PP^3$ and $c$ the first Chern class of $\OO_{\PP(S)}(1)$. Then $ev^*(h)=c$. We will also use $\sigma_{*}$ for both the Schubert classes on $G(2,4)$ (c.f. Section 1.5 in \cite{GriffithsHarris}) and their pull-backs to $\PP(S)$ since no confusions are likely. It is straightforward to verify the followings.

\begin{align*}
&Ch(ev^* T_X)\\
=&ev^*(Ch (T_E) +Ch(\OO(-a)))\\
=&ev^*(4(1+h+\frac{h^2}{2}+\frac{h^3}{6}+ \ldots)-1+1-ah+\frac{a^2h^2}{2}-\frac{a^3h^3}{6}+\ldots))\\
=&4+(4-a)c+\frac{4+a^2}{2}c^2+\frac{4-a^3}{6}c^3+\ldots\\
\end{align*}
\[
Ch(T_\pi)=1+(2c-\sigma_1)+\frac{(2c-\sigma_1)^2}{2}+\ldots,
\]
\[
Td(T_\pi)=1+(2c-\sigma_1)+\frac{(2c-\sigma_1)^2}{12}+\ldots,
\]
\begin{align*}
&Ch(N)\\
=&Ch(ev^*T_X)-Ch(T_\pi)\\
=&3+(2-a)c+\sigma_1+\frac{a^2+4}{2}c^2-\frac{(2c-\sigma_1)^2}{2}+\frac{4-a^3}{6}c^3-\frac{(2c-\sigma_1)^3}{6}+\ldots
\end{align*}
Thus
\[
Ch(\pi_*[N])=(5-a)+\frac{a^2+4}{2}\sigma_1+\frac{12-a+3a^2-2a^3}{12}\sigma_2+\frac{4+a-a^2}{4}\sigma_{1,1}+\ldots.
\]
Since
\[
Ch(TG(2,4))=Ch(S^{*}\otimes (V/S))=4+4\sigma_1+\sigma_{1,1}+\sigma_2+\ldots,
\]
we have
\[
Ch_1(Obs)=\frac{a-a^2}{2}\sigma_1,
\]
\[
Ch_2(Obs)=\frac{a-3a^2+2a^3}{12}\sigma_2+\frac{a^2-a}{4}\sigma_{1,1}.
\]
Thus when $a$ is $2$, the Euler class of the obstruction bundle is $-\sigma_1$. And when $a$ is $3$, the Euler class of the obstruction bundle is $2\sigma_2+3\sigma_{1,1}$. Then the Gromov-Witten invariants are just the intersection numbers $(-2\sigma_2) \cdot (-2\sigma_1) \cdot (-\sigma_1)=-4$ and $(-3\sigma_2)\cdot (2 \sigma_2+3 \sigma_{1, 1})=-6$.
\end{proof}
\begin{prop}
Let $E$ be a smooth quadric with normal bundle $\OO_E(E) \cong \OO(-a)$. And let $L$ (resp. $H$) be a line (resp. a hyperplane section) in $E$. Then when $a$ is $1$, $\langle [L], [H] \rangle^X_{0, [L]}=1$. When $a$ is $2$, $\langle [L] \rangle^X_{0, [L]}=4$.
\end{prop}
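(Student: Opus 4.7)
The plan is to adapt the Chern-class and Grothendieck-Riemann-Roch calculation of Proposition \ref{case1} to the quadric case. Since $E \cdot [L] = -a < 0$, every effective curve in the class $[L]$ lies in $Q = E$, so $M := \overline{M}_{0,0}(X, [L])$ equals the Fano variety of lines $F(Q)$. Via the spin isomorphism $\mathrm{Spin}(5) \cong \mathrm{Sp}(4)$, $F(Q) \cong \PP^3 = \PP(V)$ where $V$ is the $4$-dimensional half-spin representation, and each $L \in \PP(V)$ corresponds to the $2$-dimensional isotropic subspace $\mathcal{W}_L = L \otimes (L^\perp / L) \subset \bigwedge^2 V / \langle \omega \rangle \cong \mathbb{C}^5$. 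The universal family $\mathcal{C} = \PP(\mathcal{W}) \to \PP^3$ is the $\PP^1$-bundle associated with the resulting rank-$2$ sub-bundle $\mathcal{W} \subset \OO_{\PP^3}^{\oplus 5}$. Direct Chern-class calculation gives $c_1(\mathcal{W}) = -2h$ and $c_2(\mathcal{W}) = 2h^2$, where $h$ is the hyperplane class on $M$; these yield the Segre identities $\pi_*\xi = 1$, $\pi_*\xi^2 = 2h$, $\pi_*\xi^3 = 2h^2$, $\pi_*\xi^4 = 0$, with $\xi = c_1(\OO_\pi(1))$. Moreover $ev^* H = \xi$, since $\mathcal{W} \hookrightarrow \OO^5$ embeds $\mathcal{C}$ in $M \times \PP^4$ so that $ev$ is the second projection.

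Following Proposition \ref{case1}, the plan is to combine the normal bundle sequence $0 \to T_Q \to T_X|_Q \to \OO_Q(-a) \to 0$, the adjunction $0 \to T_Q \to T_{\PP^4}|_Q \to \OO_Q(2) \to 0$, and the relative Euler sequence $0 \to \OO \to \pi^*\mathcal{W} \otimes \OO_\pi(1) \to T_\pi \to 0$ to record
\[\mathrm{Ch}(ev^*T_X) = 5 e^\xi - 1 - e^{2\xi} + e^{-a\xi}, \qquad \mathrm{Ch}(T_\pi) = e^{2\xi - 2\pi^*h},\]
and hence $\mathrm{Ch}(N) = \mathrm{Ch}(ev^*T_X) - \mathrm{Ch}(T_\pi)$ on $\mathcal{C}$, where $0 \to T_\pi \to ev^*T_X \to N \to 0$.

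For $a = 1$, the normal bundle of any line $L \subset Q \subset X$ splits as $\OO \oplus \OO(1) \oplus \OO(-1)$, so $H^1(L, N_{L/X}) = 0$, the moduli space $M$ is smooth of the expected dimension $3$, and the obstruction bundle vanishes. The invariant
\[\langle [L], [H]\rangle^X_{0, [L]} = \int_M \pi_*(ev^*[L]_X) \cdot \pi_*(ev^*[H]_X)\]
is then evaluated using the self-intersection formula $ev^*(i_*\alpha) = (ev')^*\bigl((-aH) \cup \alpha\bigr)$ together with $H \cdot [L]_Q = [\mathrm{pt}]_Q$ and $H^2 = 2[L]_Q$ in $Q$, and the Segre identities above. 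For $a = 2$, the actual dimension of $M$ exceeds the expected dimension by $1$, so $\mathrm{Obs} = R^1 \pi_* N$ is a line bundle on $M = \PP^3$; one applies Grothendieck-Riemann-Roch, $\mathrm{Ch}(\pi_*[N]) = \pi_*\bigl(\mathrm{Ch}(N) \cdot \mathrm{Td}(T_\pi)\bigr)$, extracts $c_1(\mathrm{Obs})$ from $\mathrm{Ch}(T_M) - \mathrm{Ch}(\pi_*[N])$, and obtains $\langle [L]\rangle^X_{0, [L]} = \int_M e(\mathrm{Obs}) \cdot \pi_*(ev^*[L]_X)$.

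The main obstacle is the bookkeeping: identifying the spin bundle $\mathcal{W}$ explicitly, verifying the Segre-class identities on $\PP^3$, and carrying out the Grothendieck-Riemann-Roch expansion for $a = 2$ while keeping conventions consistent between the projections $\pi \colon \mathcal{C} \to M$ and $ev' \colon \mathcal{C} \to Q$. These are routine Chern-class manipulations, entirely parallel to the author's treatment of the $\PP^3$ case in Proposition \ref{case1}.
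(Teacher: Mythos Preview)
Your approach is correct and closely parallels the paper's, but you parametrize the moduli space differently. The paper realizes $\overline{M}_{0,0}(X,[L])$ as the zero locus of a section of $\operatorname{Sym}^2 S^*$ inside $G(2,5)$ and carries out the entire computation in Schubert calculus: the insertions become (multiples of) $\sigma_1, \sigma_2, \sigma_{1,1}$, and for $a=2$ the Grothendieck--Riemann--Roch calculation yields $e(\mathrm{Obs}) = -\sigma_1$, so the invariant is the intersection number $4\sigma_{1,1}\cdot\sigma_1\cdot(-\sigma_1)\cdot(-\sigma_2)=4$ on $G(2,5)$. You instead invoke the exceptional isomorphism $\mathrm{Spin}(5)\cong\mathrm{Sp}(4)$ to identify $F(Q)\cong\PP^3$ and work with the spinor bundle $\mathcal{W}$ (a twist of the null-correlation bundle, with $c_1=-2h$, $c_2=2h^2$). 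The GRR step and the excess-intersection logic are then identical in structure to the paper's; only the ambient intersection ring changes from $A^*(G(2,5))$ to $A^*(\PP^3)$. Your route trades the need to know $c_3(\operatorname{Sym}^2 S^*)$ and Schubert identities for the need to identify $\mathcal{W}$ explicitly; neither is intrinsically simpler, and both are routine once the model is fixed. One small caution: be careful with the sign convention relating $\pi_*\xi^{k}$ to Segre classes of $\mathcal{W}$ versus $\mathcal{W}^*$, since your stated push-forwards correspond to $s_k(\mathcal{W}^*)$ rather than $s_k(\mathcal{W})$.
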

\begin{proof}
When $a$ is $1$, the Gromov-Witten invariant is enumerative and is just the intersection number $4\sigma_{1,1}\cdot \sigma_1 \cdot (-\frac{1}{2}\sigma_1) \cdot (-\frac{1}{2}\sigma_2)=1$. When $a$ is $2$, we use an excess intersection computation similar as above. The moduli space $\overline{M}=\overline{M}_{0, 0}(X, [L])$ is isomorphic to the zero locus of a section of $Sym^2 S^*$ over $G(2, 5)$ and the universal family $\mathcal{C} \to \overline{M}$ is the restriction of $\PP(S)\to G(2,5)$, where $\pi: S \to G(2, 5)$ is the universal rank $2$ subbundle. Let $ev: \mathcal{C} \to Q \cong E \subset X$ be the evaluation map, where $Q \subset \PP^4$ is a smooth quadric hypersurface.
\[
0 \to T_Q(\cong T_E) \to T_{\PP^4} \to N_{Q/\PP^4} \to 0,
\]
\[
0 \to T_E \to T_X \to \OO_E(-2) \to 0
\]
\[
0 \to T_\pi \to ev^* T_X \to N \to 0.
\]
And the obstruction bundle $Obs$ is $ R^1\pi_*(ev^*N))$. As before, let $c$ denote the first Chern class of the relative $\OO_{\PP(S)}(1)$, which is the pull-back of $\OO_E(1)$ via the evaluation map. We list the results below.
\begin{align*}
&Ch(ev^* T_X)\\
=&ev^*(Ch (T_E) +Ch(\OO(-2)))\\
=&ev^*(Ch (T_{\PP^5})-Ch(\OO_{E}(2)+Ch(\OO(-2)))\\
=&4+c+\frac{5}{2}c^2+\ldots\\
\end{align*}
\[
Ch(T_\pi)=1+(2c-\sigma_1)+\frac{(2c-\sigma_1)^2}{2}+\ldots,
\]
\[
Td(T_\pi)=1+(2c-\sigma_1)+\frac{(2c-\sigma)^2}{12}+\ldots,
\]
\[
Ch(N)=Ch(ev^*T_X)-Ch(T_\pi)=3+\sigma_1-c+\frac{5}{2}c^2-\frac{(2c-\sigma_1)^2}{2}+\ldots
\]
Thus
\[
Ch(\pi_*[N])=2+3\sigma_1+\ldots.
\]
\[
Ch(TG(2,5))=Ch(S^{*}\otimes (V/S))=6+5\sigma_1+\ldots,
\]
\[
Ch(T_{\overline{M}})=Ch(TG(2,5))-Ch(Sym^2S^*)=3+2\sigma_1+\ldots.
\]
\[
Ch(Obs)=1-\sigma_1+\ldots
\]
Thus the Euler class of the obstruction bundle is $-\sigma_1$. Then one can easily check the Gromov-Witten invariant is just $4\sigma_{1,1}\cdot \sigma_1 \cdot (-\sigma_1) \cdot (-\sigma_2)=4$.
\end{proof}
We now turn to the del Pezzo threefold case.
\begin{prop}\label{prop:rational}
Let $(E, \OO(1))$ be a del Pezzo threefold with rational singularities. Then there are only finitely many lines $L$ (i.e. $L \cdot \OO(1)=1$) that pass through a general point.
\end{prop}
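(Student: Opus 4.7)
The plan is to bound the Hilbert scheme of lines through a general point $p$ by analyzing each irreducible component separately. Let $\mcH$ be the Hilbert scheme parametrizing curves $L \subset E$ with $L \cdot \OO(1) = 1$; since $\OO(1)$ is ample, $\mcH$ is projective and hence has only finitely many irreducible components. It suffices to prove the finiteness statement for each component.

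For each component $\mcH_0$, let $\mathrm{ev}: \mcL_0 \to E$ be the evaluation map from the universal family $\mcL_0 \to \mcH_0$. The image of $\mathrm{ev}$ is closed. If this image is a proper subvariety of $E$, a general $p$ avoids it and no line from $\mcH_0$ passes through $p$; the finitely many non-dominant components collectively contribute nothing to a general point. So we may focus on the dominant components.

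For a dominant component $\mcH_0$, I would show that a general line $L \in \mcH_0$ through a general $p$ is a smooth rational curve with balanced normal bundle $N_{L/E} \cong \OO \oplus \OO$. Since rational singularities have codimension at least $2$, a general $L$ avoids the singular locus of $E$, so $N_{L/E}$ is locally free of rank $2$ along $L$; rationality of $L$ (i.e. $L \cong \PP^1$) follows from the minimality of the $H$-degree together with standard free-curve theory on Fano varieties. Adjunction combined with $-K_E = 2H$ then yields $\deg N_{L/E} = 0$. Dominance of $\mathrm{ev}$ forces $N_{L/E}$ to be globally generated at a general point of $L$ (the differential of $\mathrm{ev}$ maps onto $T_{E,x}$, giving surjectivity of $H^0(L, N_{L/E}) \to N_{L/E} \otimes k(x)$), hence globally generated on all of $L$; a globally generated degree-$0$ rank-$2$ bundle on $\PP^1$ is necessarily $\OO \oplus \OO$.

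With $N_{L/E} \cong \OO \oplus \OO$, we obtain $H^0(L, N_{L/E}(-p)) = H^0(\PP^1, \OO(-1)^{\oplus 2}) = 0$, so deformations of $L$ fixing $p$ vanish infinitesimally and the subscheme of $\mcH_0$ parametrizing lines through $p$ is zero-dimensional at $[L]$. By semicontinuity this persists for $p$ in an open subset of $E$, and summing over the finitely many components of $\mcH$ yields the claimed finiteness. The main obstacle is rigorously establishing the smoothness and rationality of a general line in a dominant component; this draws on rational singularities (to control the codimension of the singular locus, ensuring that a general line avoids it) and on the minimality of the $H$-degree among curve classes on the Fano variety $E$ (to ensure the generic member of a dominant family is $\PP^1$ rather than a higher-genus curve).
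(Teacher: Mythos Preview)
Your strategy—reduce to dominant components, show a general line is a smooth $\PP^1$ in the smooth locus with $N_{L/E}\cong\OO\oplus\OO$, then deduce finiteness from $H^0(N_{L/E}(-p))=0$—is reasonable, and the normal-bundle computation is correct once its hypotheses are in place. But the step ``a general $L$ avoids the singular locus of $E$'' is exactly where the difficulty lies, and your justification is insufficient.

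Knowing that the singular locus $S$ has codimension $\geq 2$ does not force a general member of a dominant family of curves to miss it. Nothing you have written excludes a one-parameter family of lines through a general point $p$, each meeting a one-dimensional singular curve $S$ at a \emph{different} point. Bend-and-break with two fixed points rules out the case where all lines through $p$ pass through a single fixed singular point (the line would have to break, impossible in $H$-degree $1$), but it does not handle this sliding case. Until you know $L\subset E_{\mathrm{sm}}$, you cannot even speak of $N_{L/E}$ as a rank-$2$ bundle on $L$, and the rest of your argument is unavailable.

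The paper's proof is organized precisely around this obstacle. Since a del Pezzo threefold is locally a complete intersection, rational singularities are canonical; one then passes to a crepant terminalization $f:E'\to E$. Terminal threefold singularities are isolated, and a lemma cited from \cite{SymRC} guarantees that every irreducible curve of anticanonical degree $2$ through a general point of $E'$ lies in the smooth locus. Pulling lines back to $E'$ (crepancy gives $-K_{E'}\cdot C=2$) reduces the problem to the smooth situation, after which the infinitesimal count you describe goes through. The passage to a model with isolated singularities is the essential idea, and your argument does not provide a replacement for it.

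A smaller point: your justification for rationality of $L$ (``minimality of the $H$-degree together with standard free-curve theory'') is too vague. Free-curve theory presupposes rational curves; when $\OO(1)$ is not very ample (degrees $1$ and $2$), a curve of $H$-degree $1$ is not a priori a $\PP^1$, and a separate argument is needed.
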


\begin{proof}
By definition, a del Pezzo threefold has locally complete intersection singularities. Thus having rational singularities is the same as having canonical singularities (Corollary 5.24, \cite{KM98}). There is a $3$-fold $E'$ with terminal singularities and a birational morphism $f: E' \to E$ such that $f^*K_E=K_{E'}$. We may also choose $f$ to be an isomorphism over the smooth locus of $E$. To see this, choose a resolution of singularities of $E$ which is an isomorphism over the smooth locus of $E$ and then run the relative MMP over $E$. Note that families of lines through a general point in $E$ are in one to one correspondence with families of generically irreducible curves $C_t$ with $-K_{E'}\cdot C_t=2$ through a general point in $E'$. Thus it suffices to show that there are only finitely many such curves through a general point in $E'$. But every irreducible curve $C$ through a general point in $E'$ with $-K_{E'}\cdot C=2$ lies in the smooth locus of $E'$ by Lemma 5.2 in \cite{SymRC}. Hence such curves have no first order deformation (as a stable map) with one point fixed. So there are only finitely many such curves.
\end{proof}

\begin{prop}\label{prop:isolated}
If $E$ has at worst isolated singularities, then there are only finitely many lines through a general point.
\end{prop}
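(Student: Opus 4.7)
The plan is to mimic the proof of Proposition \ref{prop:rational} but adapt it so as to avoid using a crepant resolution, which required canonical (equivalently, rational) singularities. Let $\Sigma = \operatorname{Sing}(E)$, which by hypothesis is finite, and let $p$ be a general point, so in particular $p\in E^{\mathrm{sm}} := E\setminus\Sigma$. I would split the lines $L\subset E$ with $p\in L$ into two classes.

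\emph{Class A (lines meeting $\Sigma$).} The fundamental linear system $|\OO_E(1)|$ (or a suitable multiple) embeds $E$ in some $\PP^N$, and a line of $E$, i.e.\ a curve $L$ with $L\cdot \OO_E(1)=1$, is an honest projective line in $\PP^N$. Since through any two distinct points of $\PP^N$ there is a unique projective line, for each $q\in\Sigma$ at most one line of $E$ contains both $p$ and $q$. As $\Sigma$ is finite, only finitely many lines of Class A pass through $p$.

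\emph{Class B (lines contained in $E^{\mathrm{sm}}$).} On the smooth open threefold $E^{\mathrm{sm}}$ the normal bundle $N_{L/E}=N_{L/E^{\mathrm{sm}}}$ is locally free of degree $-K_E\cdot L - 2 = 0$, so $\chi(N_{L/E})=2$ and every component of the Hilbert scheme of such lines has dimension at least $2$. For a generic member of any such component, upper semicontinuity of $h^0(N_{L/E})$ combined with the lower bound from the Euler characteristic forces $N_{L/E}\cong \OO\oplus \OO$; then $H^0(N_{L/E}(-p))=0$, the component is exactly $2$-dimensional, and the $3$-dimensional universal family has evaluation map with $0$-dimensional fibers over the general point of its image. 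Summing over components yields only finitely many Class B lines through $p$.

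\emph{Main obstacle.} The delicate step is to exclude components of the Hilbert scheme whose general member has an unbalanced normal bundle $\OO(a)\oplus \OO(-a)$ with $a\geq 1$, since such a component would contribute a positive-dimensional family of lines through a general $p$. Such a family sweeps out a surface $S\subset E$ all of whose rulings pass through $p$, exhibiting $S$ as a cone with vertex $p$; if this were to occur for every general $p$, one would obtain a positive-dimensional family of cones contained in $E$, contradicting the del Pezzo polarization by a degree count against $H$. Failing a uniform argument, this last step can always be handled by a case-by-case verification using Fujita's classification~\cite{FujitaSingularDelPezzo} of singular del Pezzo threefolds, which is the route I would take if a clean general argument resists.
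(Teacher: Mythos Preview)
Your split into Classes A and B is reasonable, but both halves have gaps that the paper's much shorter argument avoids.

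\textbf{Class A.} The claim that $|\OO_E(1)|$ embeds $E$ so that lines become honest projective lines is not justified: for del Pezzo threefolds of degree $1$ or $2$, the system $|\OO_E(1)|$ is not very ample, and passing to $|\OO_E(m)|$ with $m>1$ sends a line to a curve of degree $m$, so the ``unique line through two points'' argument no longer applies. What does work uniformly is bend-and-break: a positive-dimensional complete family of lines through two fixed points would have to degenerate, but curves of $\OO_E(1)$-degree $1$ cannot break since $\OO_E(1)$ is ample. This is exactly how the paper handles lines through a singular point.

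\textbf{Class B.} Your ``main obstacle'' evaporates once you use that $p$ is a general point of $E$, not merely of the image of the evaluation map. If a component $M$ of the Hilbert scheme of lines in $E^{\mathrm{sm}}$ has evaluation map dominating $E$, then its general member passes through a general point of $E$ and is therefore free (the standard fact that rational curves through a very general point of a smooth variety are free); freeness together with $\deg N_{L/E}=0$ forces $N_{L/E}\cong\OO\oplus\OO$, so $\dim M=2$ and the fiber over a general $p$ is finite. If the evaluation map is not dominant, a general $p$ misses its image entirely and that component contributes nothing. Either way there is nothing further to check, and neither the cone argument nor Fujita's classification is needed.

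The paper's proof packages these two observations into a few lines by contradiction: assume a $1$-parameter family $\{L_t\}$ through a general $p$; any $L_t\subset E^{\mathrm{sm}}$ is free (since $p$ is general) and hence rigid once $p$ is fixed, so every $L_t$ must meet $\Sigma$; since $\Sigma$ is finite and the family is irreducible, all $L_t$ pass through a common $q\in\Sigma$; now bend-and-break on the family through $p$ and $q$ forces a degeneration, which is impossible for curves of $H$-degree $1$.
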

\begin{proof}
Let $p$ be a general point. Assume that there is a $1$-dimensional family of lines $L_t$ containing $p$. Then all the $L_t$ has to contain a singular point $q$. Otherwise since $-K_E \cdot L_t=2$, a line in the smooth locus has no deformation with one point fixed. But then this one parameter family has two points ($p$ and $q$) fixed. So by bend-and-break, this family has to degenerate. But there is no degeneration of lines. Thus there are only finitely many lines through $p$.
\end{proof}

By Propositions \ref{prop:rational}, \ref{prop:isolated}, for a line $L$ in $E$, there is a non-zero Gromov-Witten invariant $\langle [C] \rangle^X_{0, [L]}$ for some curve $C$ intersecting $E$ at general points since in this case the Gromov-Witten invariant is just counting the number of lines meeting $C$.

\section{Uniruled $4$-folds whose second Betti number is $2$}
Let $X$ and $Y$ be two smooth projective manifolds that are symplectic deformation equivalent. Assume $X$ is rationally connected. The first lemma deals with the maximal rationally connected (MRC) quotient of $Y$.

\begin{lem}\label{lem:mrc}
Let $Y \dashrightarrow B$ be the MRC quotient of $Y$. Then $\dim Y -\dim B \geq 2$.
\end{lem}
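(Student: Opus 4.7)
The strategy is proof by contradiction using symplectic deformation invariance of uniruledness together with a structural analysis of the MRC fibration under the hypothetical failure of the bound.

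First, since $X$ is rationally connected, $X$ is uniruled; uniruledness of smooth projective varieties is characterized by a non-vanishing Gromov--Witten invariant with one point insertion, so by the symplectic deformation invariance of GW invariants (Ruan, Hu--Li--Ruan), $Y$ is uniruled as well. This immediately gives $\dim Y - \dim B \geq 1$. If $b_2(Y) = 1$, then any positive-dimensional MRC base $B$ would admit an ample divisor whose pullback to $Y$ produces an extra class in $H^2(Y, \RR)$, contradicting $b_2(Y) = 1$; hence $B$ is a point and $\dim Y - \dim B = \dim Y$, which is already $\geq 2$.

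Now assume $b_2(Y) \geq 2$ and suppose for contradiction that $\dim Y - \dim B = 1$. After passing to a suitable smooth birational model, the MRC fibration takes the form $\tilde Y \to \tilde B$, a $\PP^1$-fibration with $\tilde B$ non-uniruled (by maximality of the MRC quotient). The key structural observation is that any rational curve $C \subset Y$ passing through a general point must be contained in an MRC fiber: otherwise the image of $C$ in $\tilde B$ would be a non-constant rational curve through a general point of $\tilde B$, contradicting non-uniruledness. Consequently, every non-vanishing Gromov--Witten invariant of $Y$ of the form $\langle [pt], \alpha_1, \ldots \rangle^Y_{0, \beta}$ must have $\beta$ equal to a positive multiple of the MRC fiber class $[f] \in H_2(Y, \ZZ)$.

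Symplectic invariance now transfers this constraint back to $X$: every class $\beta \in H_2(X, \ZZ) \cong H_2(Y, \ZZ)$ supporting a non-vanishing one-point GW invariant on $X$ must be a positive multiple of $[f]$. But rational connectedness of $X$ produces non-vanishing one-point GW invariants in classes spanning a subspace of $H_2(X, \RR)$ of rank at least $2$---using, for instance, free rational curves in two distinct extremal ray directions of the Mori cone, whose non-vanishing GW counts are accessible via Theorem~\ref{thm:extremal} in the fourfold applications. This contradicts the collapse of all such classes to the single ray $\RR_{>0}[f]$. The main obstacle in executing this plan is extracting enough independent non-vanishing GW invariants from rational connectedness of $X$ alone; in the setting of Section~5 (fourfolds with $b_2 = 2$) this is handled via the extremal-ray analysis developed earlier in the paper.
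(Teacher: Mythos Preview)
Your argument has the right overall shape, but there is a genuine gap at the decisive step. You correctly argue that if the MRC fibration $Y\dashrightarrow B$ has one-dimensional fibers, then (since $B$ is non-uniruled) every genus-zero stable map through a very general point of $Y$ has image contained in a single fiber, so its class lies in $\ZZ_{>0}[f]$; hence every non-vanishing one-point Gromov--Witten invariant of $Y$, and therefore of $X$, is supported on $\RR_{>0}[f]$. The problem is the next move: you claim that rational connectedness of $X$ forces non-vanishing one-point invariants in at least two independent directions of $H_2(X,\RR)$. This is not a known general consequence of rational connectedness --- uniruledness supplies one such invariant, but nothing prevents it from lying in $\RR_{>0}[f]$ --- and Theorem~\ref{thm:extremal} does not fill the gap either, since that result is fourfold-specific, does not in general produce point insertions, and excludes certain divisorial contractions. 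You yourself flag this as ``the main obstacle,'' which is accurate: as written, the argument stalls exactly where the real content lies. (The $b_2=1$ case is also handled a bit loosely --- the MRC quotient is only a rational map, so ``pullback of an ample divisor'' needs care --- though here one can simply cite Koll\'ar's result for $b_2=1$.)

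The paper's proof sidesteps this by invoking the argument of Theorem~1.6 in \cite{SymRC}. Rather than seeking two independent one-point invariants on $X$, that argument exploits the specific structure of the minimal free curve class $[f]$ (with $-K\cdot[f]=2$) together with the rational connectedness of $X$ to produce, via symplectically invariant Gromov--Witten data, additional rational curves on $Y$ through a general point that are not contained in MRC fibers --- contradicting the definition of the MRC quotient. In other words, the paper imports from \cite{SymRC} precisely the mechanism your outline is missing.
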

\begin{proof}
This essentially follows from the proof of Theorem 1.6 in \cite{SymRC}. In fact, by the result of Koll{\'a}r and Ruan (Theorem 4.2.10, \cite{KollarUni}, Proposition G, Proposition 4.9, \cite{RuanUni}), $Y$ is uniruled. Thus the MRC quotient has at least $1$ dimensional fiber. If the fiber is $1$ dimensional, then the argument in the proof of Theorem 1.6 in \cite{SymRC} shows that there exist more rational curves in $Y$ that need to be quotient out to form the MRC quotient, thus a contradiction.
\end{proof}

A natural question to ask is whether Hodge numbers are symplectic deformation invariants. This question is closely related to Koll{\'a}r's conjecture in the following way.

Koll{\'a}r's conjecture predicts that rational connectedness is a symplectic invariant in this setting. There is an obvious obstruction for a variety to be rationally connected. Namely, for a rationally connected variety, the Hodge numbers $h^{0,p}=h^{p,0}$ are zero. We first show that this obstruction vanishes in dimension $4$.

From now on, assume $X$ and $Y$ are $4$-folds. 

\begin{lem}\label{lem:hodgenumber}
The Hodge numbers of $Y$ are the same as those of $X$. In particular, $h^{0,p}(Y)=h^{p,0}(Y)=0$.
\end{lem}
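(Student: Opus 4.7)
The plan is to combine Betti-number equality (diffeomorphism invariants under symplectic deformation equivalence) with Chern-number equality in order to pin down the Hodge diamond of $Y$. The Chern classes of $(TY,J)$ for any $\omega$-compatible almost complex structure $J$ depend only on the symplectic deformation class, and on a Kähler manifold they agree with the algebraic Chern classes, so Hirzebruch--Riemann--Roch gives $\chi(Y,\Omega^p_Y)=\chi(X,\Omega^p_X)$ for every $p$; in particular $\chi(\OO_Y)=1$. Since $X$ is rationally connected and (by the section context) $b_2(X)=2$, the Hodge diamond of $X$ satisfies $h^{p,0}(X)=0$ for $p\geq 1$, $h^{1,1}(X)=2$, and the entries $h^{2,1}(X),\,h^{3,1}(X),\,h^{2,2}(X)$ are determined by $b_3(X)$ and $b_4(X)$.

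The first task is to kill the $(p,0)$ column for $Y$. From $b_1(Y)=0$ and Hodge decomposition one has $h^{1,0}(Y)=0$. By the Koll{\'a}r--Ruan criterion (Theorem~4.2.10 of \cite{KollarUni}, Proposition~G and Proposition~4.9 of \cite{RuanUni}), the nontrivial Gromov--Witten invariant of $X$ with one point insertion transfers to $Y$, so $Y$ is uniruled and $h^{4,0}(Y)=0$. For $h^{2,0}(Y)$ I would use the section hypothesis $b_2(Y)=b_2(X)=2$: since $Y$ is projective its Kähler class gives $h^{1,1}(Y)\geq 1$, and the relation $b_2(Y)=2h^{2,0}(Y)+h^{1,1}(Y)=2$ forces $h^{2,0}(Y)=0$ and $h^{1,1}(Y)=2$. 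Substituting into $\chi(\OO_Y)=1-h^{1,0}+h^{2,0}-h^{3,0}+h^{4,0}=1-h^{3,0}(Y)=1$ then yields $h^{3,0}(Y)=0$, completing the vanishing of the column.

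For the interior of the diamond, $b_3(Y)=2h^{2,1}(Y)+2h^{3,0}(Y)$ immediately gives $h^{2,1}(Y)=h^{2,1}(X)$; writing out $\chi(Y,\Omega^1_Y)=h^{1,0}-h^{1,1}+h^{2,1}-h^{3,1}+h^{3,0}$ (using Serre duality $h^{1,4}=h^{3,0}$) and equating with the analogous expression for $X$ yields $h^{3,1}(Y)=h^{3,1}(X)$; finally $b_4(Y)=h^{2,2}(Y)+2h^{3,1}(Y)+2h^{4,0}(Y)$ gives $h^{2,2}(Y)=h^{2,2}(X)$. The hard part is the vanishing $h^{2,0}(Y)=0$: the Betti- and Chern-numerical relations alone admit a one-parameter family of Hodge diamonds with $h^{2,0}(Y)=h^{3,0}(Y)=k$ and compensating shifts $h^{1,1}(Y)=h^{1,1}(X)-2k$, $h^{2,1}(Y)=h^{2,1}(X)-k$, $h^{3,1}(Y)=h^{3,1}(X)+2k$, $h^{2,2}(Y)=h^{2,2}(X)-4k$, and it is precisely the interplay of the Kähler inequality $h^{1,1}(Y)\geq 1$ with the hypothesis $b_2=2$ that forces $k=0$; without $b_2=2$ this step of the argument collapses.
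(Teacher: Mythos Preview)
Your argument is internally correct, but it proves a weaker statement than the lemma as placed in the paper: the hypothesis $b_2=2$ is introduced only \emph{after} this lemma (the line ``Now assume $b_2(X)=b_2(Y)=2$'' comes after the proof and the remark), so the lemma is meant to hold for arbitrary $b_2$. You flag this yourself at the end, and indeed without $b_2=2$ your step $h^{2,0}(Y)=0$ fails, and with it the whole chain.

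The paper kills the $(p,0)$ column in the opposite order: it attacks $h^{3,0}$ and $h^{4,0}$ first, geometrically, using Lemma~\ref{lem:mrc}. Since the MRC fibration of $Y$ has fibre dimension at least $2$, a general fibre is proper and rationally connected, and a curve inside such a fibre furnishes $f:\PP^1\to Y$ with $f^*T_Y\cong\bigoplus_i\OO(a_i)$, $a_1\ge a_2\ge 1$, $a_3,a_4\ge 0$. Then $f^*\Omega_Y^p$ is negative for $p=3,4$, so any global section of $\Omega_Y^3$ or $\Omega_Y^4$ vanishes along all such curves, hence on a dense open set, hence identically. With $h^{3,0}=h^{4,0}=0$ in hand, $\chi(\OO_Y)=\chi(\OO_X)=1$ together with $h^{1,0}=\tfrac12 b_1=0$ forces $h^{2,0}(Y)=0$ with no constraint on $b_2$. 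The remaining interior entries then follow from the Betti numbers and $\chi(\Omega^1)$ exactly as in your last paragraph.

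So the back half of your argument matches the paper; the discrepancy is in which two vanishings you take as input. You trade the geometric curve argument for the arithmetic coincidence $b_2=2$; the paper's approach costs the invocation of Lemma~\ref{lem:mrc} but buys the lemma in full generality. Your own one-parameter ambiguity observation explains exactly why two independent vanishing inputs are needed, and the paper's choice of $h^{3,0}$ and $h^{4,0}$ (rather than $h^{2,0}$ and $h^{4,0}$) is what makes the argument $b_2$-free.
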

\begin{proof}
We first show that $H^0(Y, \Omega_Y^{p})=0$ for $p=3, 4$. By Lemma \ref{lem:mrc}, the MRC quotient map $Y \dashrightarrow B$ has fiber dimension at least $2$. Moreover, the general fiber is a proper rationally connected subvariety of $Y$. Thus there is a rational curve $f: \PP^1 \to Y$ such that
\[
f^* T_Y \cong \oplus_{i=1}^4 \OO(a_i)
\]
and
\[
a_1 \geq \ldots, \geq a_4 \geq 0, a_1 \geq a_2 \geq 1.
\]
So $\Omega_Y^p$ restricts to negative vector bundles on such curves if $p=3$ or $4$, and in particular has no sections. Since the deformation of such curves cover a dense open subset of $Y$, $\Omega_Y^p$ has no global sections for $p=3, 4$.

Under symplectic deformations, the cohomology classes of the Chern classes (of the tangent bundle) remain the same. Then by Hirzebruch-Riemann-Roch, so is the holomorphic Euler characteristic $\chi(X, \Omega^p)=\sum (-1)^i h^{p, i}$. Notice that $h^{0,0}=1$, $h^{1,0}=h^{0,1}=\frac{1}{2}b_1$ and $\sum_{p+q=i} h^{p, q}=b_i$ are even topological invariants. An easy calculation using these equalities shows that the Hodge numbers of $Y$ are the same as those of $X$.
\end{proof}

\begin{rem}
Using a similar argument on the holomorphic Euler characteristic and the topolgical pairing on $H^3$, one can show that the Hodge numbers are symplectic deformation invariants in dimension $3$.
\end{rem}

Now assume $b_2(X)=b_2(Y)=2$. Note that in this case we have $h^{0, 2}=h^{2, 0}=0$ for any K{\"a}hler manifold by Hodge symmetry. We want to show that $Y$ is also rationally connected.
\begin{lem}\label{lem:structure}
Let $Z$ be a smooth projective variety of dimension $4$ which is uniruled but not rationally connected. If the second Betti number of $Z$ is $2$, then the MMP for $Z$ consists of a finite number of flips $Z=Z_0 \dashrightarrow Z_1 \dashrightarrow \ldots \dashrightarrow Z_n$ and a fiber type contraction $Z_n \to B$, which realizes the MRC quotient of $Z$. Furthermore, either $\dim B=3$ or $Z=Z_n$ (i.e. no flips are needed).
\end{lem}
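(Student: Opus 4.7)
The plan is to run the MMP on $Z$ and use the constraint $\rho(Z)=2$ to force the structure of the output. Since $Z$ is uniruled, $K_Z$ is not pseudoeffective, and by termination of flips in dimension $\le 4$ the MMP on $Z$ produces a sequence $Z = Z_0 \dashrightarrow \cdots \dashrightarrow Z_N$ of divisorial contractions and flips, terminating in a Mori fiber space $Z_N \to B$. Each step is a flip (preserving $\rho$) or a divisorial contraction (dropping $\rho$ by $1$). The first move is to rule out divisorial contractions entirely: if $Z_i \dashrightarrow Z_{i+1}$ were divisorial, then $\rho(Z_{i+1})=1$ and the continuation of the MMP from $Z_{i+1}$ must end with an MFS over a point, making $Z_{i+1}$ a $\QQ$-Fano variety of Picard number $1$, hence rationally connected by Koll\'ar--Miyaoka--Mori; by birational invariance of rational connectedness this would force $Z$ to be rationally connected, contradicting the hypothesis. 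Therefore every step is a flip, $\rho(Z_N)=2$, $\rho(B)=1$, and the general fiber of $Z_N \to B$ is $\QQ$-Fano and so rationally connected, which identifies $B$ with the MRC base of $Z$; in particular $\dim B \ge 1$.

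It remains to show that if $\dim B \le 2$ then no flips are needed, i.e.\ $Z = Z_N$. When $\dim B = 1$, $B$ is a smooth curve of genus $\ge 1$ (it is non-uniruled), so the rational map $Z \dashrightarrow B$ automatically extends to a morphism $\phi \colon Z \to B$. A routine check shows $\rho(Z/B)=1$ and that the general fiber of $\phi$ is a smooth uniruled threefold of Picard number $1$, hence Fano, so $\phi$ is itself an MFS and we conclude $Z = Z_N$.

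The hard case is $\dim B = 2$: here $B$ is a normal projective surface with $\rho(B)=1$ that is not uniruled, so $B$ is not $\PP^2$, and a smooth model of $B$ is either an abelian surface, an elliptic surface of Kodaira dimension $1$, or a minimal surface of general type with $b_2 = 1$ (for example, a fake projective plane). This is where I expect to invoke the ``fake projective plane'' argument attributed to Chenyang Xu: the non-uniruledness of $B$, combined with a careful analysis of the classes in $N^1(Z)$ restricted to the exceptional divisors of a resolution of indeterminacy of $Z \dashrightarrow B$, should force the rational map $Z \dashrightarrow B$ to extend to a genuine morphism---since any exceptional divisor dominating $B$ would supply a ruling family of rational curves covering $B$ and contradict non-uniruledness, and the remaining possibility (exceptional divisors contracted to curves or points of $B$) has to be excluded using the Picard-rank-$1$ structure of $B$ and the $\rho=2$ constraint on $Z$. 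Once the morphism $Z \to B$ is in hand, the same relative Picard number and relative ampleness computation as in the $\dim B = 1$ case identifies $Z \to B$ as an MFS, giving $Z = Z_N$ and completing the proof.
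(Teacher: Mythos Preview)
Your handling of the divisorial case and the overall MMP structure matches the paper, and your $\dim B=1$ argument (extend the rational map to the genus $\ge 1$ curve $B$, then observe the result is already an MFS) is a valid alternative to the paper's one-line version of that case. The real gap is in $\dim B=2$, and it stems from a misreading of where the ``fake projective plane'' enters.

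The paper never tries to extend $Z\dashrightarrow B$ to a morphism. It works entirely on $Z_n$. The key observation---which is also what drives the paper's $\dim B=1$ case---is that by Kawamata's description of smooth fourfold flips, the flipped locus in $Z_n$ is a $\PP^1$ on which $K_{Z_n}$ is strictly positive; hence this $\PP^1$ is \emph{not} in the extremal ray contracted by $Z_n\to B$, so its image in $B$ is an honest rational curve. The problem then becomes: show $B$ carries no rational curve. Since $Z_n\to B$ is equidimensional one gets that $B$ is smooth with $\rho(B)=1$; pulling back forces $H^0(B,K_B)=0$ (otherwise $h^{2,0}(Z_n)=h^{2,0}(Z)>0$, impossible since $b_2(Z)=2$ and $\rho(Z)=2$ give $h^{1,1}(Z)=2$), so $b_2(B)=h^{1,1}(B)=1$. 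A short Albanese argument then rules out $b_1(B)>0$. Thus $B$ has the rational cohomology of $\PP^2$: either $B\cong\PP^2$, which makes $Z$ rationally connected, or $B$ is a fake projective plane, which by Yau's theorem is a ball quotient and therefore contains no rational curves. Either way the existence of a flip is contradicted.

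Your proposed route---resolve the indeterminacy of $Z\dashrightarrow B$ and analyze exceptional divisors---does not sidestep this work. An exceptional divisor of $\tilde Z\to Z$ lying over a flipping $\PP^2$ maps, via $\tilde Z\to Z_n\to B$, precisely onto the image of the flipped $\PP^1$; this may well be a curve in $B$, and excluding that possibility is exactly the Hodge-theoretic classification of $B$ above, not a matter of Picard-rank bookkeeping on $Z$. So the ``fake projective plane'' input is not a device for extending the map, as you guessed, but the endpoint of a classification showing $B$ admits no rational curves at all.
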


\begin{proof}
Since $Z$ is uniruled, $K_Z$ is not nef and there is a contraction $Z \to W$.

If the contraction is divisorial, then $W$ is a $\QQ$-Fano $4$-fold, thus rationally connected, a contradiction to our assumption.

If the contraction is a fiber type contraction, then $W=B$ is non-uniruled. Otherwise there are rational curves passing through a general point of $Z$ that are not contracted. We can make such curves to be free rational curves. The fibers over the singular points of $B$ has codimension at least $2$ since the Picard number of $Z$ is $2$. Thus we can deform such a curve to be away from such fibers. Then the image of these curves will be contained in the smooth locus of $B$. Since $B$ is $\QQ$-factorial and has Picard number $1$, then an argument similar to the proof of Corollary 4.14, Chap. IV \cite{Kollar96} shows that $B$ is rationally connected. So $Z$ is also rationally connected, a contradiction.

If the contraction is a flipping contraction, by a result of Kawamata (Theorem 1.1 \cite{KawamataSmallContraction}), there is a flip $Z \dashrightarrow Z_1$ such that $Z_1$ is also smooth (and uniruled). The flips will terminate since $b_4$ drops by one after each flip. Then one can apply the above arguments to the last variety $Z_n$. So there is a fiber type contraction $Z_n \to B$.

By the description of $4$-fold flips, there is a rational curve in $Z_n$ which is not contracted to a point in $B$. Thus if $\dim B=1$ and $Z$ is not rationally connected, then there are no flips in the running of the MMP. If $\dim B=2$, then the fibration $Z_n \to B$ is equi-dimensional. And thus by Corollary 1.4 in \cite{ViewOnContractions}, the base $B$ is smooth. Note that the Picard number of $B$ is $1$. Thus the first Betti number $b_1(B)$ is $0$ or the Albanese map $B \to Alb(B)$ is finite. Also $H^0(B, K_B)=0$ otherwise a non-zero section of $K_B$ pulls back to a non-zero section of $\Omega^2_Z$. But $b_2(Z)=2$ by assumption and $h^{2,0}=h^{0,2}=0$. Therefore $b_2(B)=h^{1,1}(B)=1$. If $b_1(B)=0$, then $B$ has the same rational homology as $\PP^2$. Thus $B$ is either $\PP^2$ or a ``fake projective space". In the first case $Z$ is rationally connected. For the latter case, $B$ is a quotient of a $2$ dimensional ball by a discrete group action by a theorem of Yau (Theorem 4, \cite{YauCalabi}). In particular, $B$ contains no rational curves. But this is impossible since the flipped rational curve is not contracted to a point in $B$. If the Albanese map is finite, then $Alb(B)$ is at least $2$ dimensional. Thus $H^0(B, \Omega_B) \geq 2$. Then taking wedge product gives a nontrivial element of $H^0(B, K_B)$. This is impossible as shown above. Thus there is no flip in the running of the MMP.
\end{proof}

Now we return to the study of the MRC quotient of $Y$.
\begin{lem}\label{lem:finestructure}
If $Y$ is not rationally connected, then there is a contraction of an extremal ray $R_Y$, $Y \to B$ with $\dim B=2$.
\end{lem}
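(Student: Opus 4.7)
The plan is to combine the three preceding results of this section to pin $\dim B$ down to exactly $2$. First, I would apply Lemma \ref{lem:structure} to $Y$: its MMP terminates in a fiber type contraction $Y_n \to B$ which realizes the MRC quotient of $Y$, and in addition either $\dim B = 3$ or $Y = Y_n$ (no flips occur). On the other hand, Lemma \ref{lem:mrc} tells us that the fibers of the MRC quotient have dimension at least $2$, so $\dim B \leq 2$. The alternative $\dim B = 3$ is therefore impossible, so $Y = Y_n$ and the MRC quotient is itself an extremal Mori contraction $Y \to B$ with $\dim B \in \{1, 2\}$. The extremal ray contracted by this morphism will be the ray $R_Y$ of the statement.

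It remains to exclude $\dim B = 1$. In that case $B$ is a smooth projective curve; since $B$ is the base of the MRC fibration it is not uniruled, hence has genus $g(B) \geq 1$. Pulling back along the surjective morphism $Y \to B$ yields an injection $H^0(B, \Omega^1_B) \hookrightarrow H^0(Y, \Omega^1_Y)$, so $h^{1,0}(Y) \geq g(B) \geq 1$. By Lemma \ref{lem:hodgenumber}, however, $h^{1,0}(Y) = h^{1,0}(X)$, and the latter vanishes because $X$ is rationally connected. This contradiction forces $\dim B = 2$.

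I do not expect any serious obstacle here: the substantive work has already been carried out in Lemmas \ref{lem:mrc}, \ref{lem:hodgenumber}, and \ref{lem:structure}, and the present lemma is essentially an assembly of their conclusions. The only point worth being careful about is that the base of the MRC quotient is indeed non-uniruled, which is built into its defining property and, in the curve case, promotes directly to $g(B) \geq 1$ — precisely the input needed to feed into the Hodge-number obstruction.
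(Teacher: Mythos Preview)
Your proof is correct and follows essentially the same route as the paper: use Lemmas~\ref{lem:mrc} and~\ref{lem:structure} to force $\dim B \leq 2$ with no flips, then exclude $\dim B=1$ by pulling back holomorphic $1$-forms from $B$. The only cosmetic difference is that the paper phrases the last step via the topological invariant $b_1(X)=b_1(Y)$ (splitting into $B=\PP^1$, which makes $Y$ rationally connected, versus $g(B)\geq 1$, which forces $b_1\geq 2$) rather than invoking Lemma~\ref{lem:hodgenumber} for $h^{1,0}$.
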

\begin{proof}
By Lemma \ref{lem:mrc}, if the end product of the MMP for $Y$ is a contraction to a $3$-fold $B$, then $B$ is uniruled and thus $Y$ is rationally connected. Thus by Lemma \ref{lem:structure}, there is a contraction $Y \to B$ with $\dim B \leq 2$. If $B$ is a curve, then $B$ has to be $\PP^1$ otherwise $b_1(X)=b_1(Y)\geq 2 h^0(B, \Omega_B) \geq 2$. But then $Y$ is rationally connected, a contradiction.
\end{proof}

\begin{lem}\label{lem:extremalray}
Let $V$ and $V'$ be two smooth projective $4$-folds which are symplectic deformation equivalent. Assume $b_2(V)=b_2(V')=2$. Let $V \to S$ be an extremal contraction corresponding to an extremal ray $R$. Assume $\dim S \leq 2$. Then there is a contraction of an extremal ray $R'$ of $V'$ such that
the extremal ray $R$ in $H_2(V, \RR)$ is the same as the extremal ray $R'$ in $H_2(V', \RR)$ under the identification $H_2(V, \RR)\cong H_2(V', \RR)$. Furthermore, the two contractions are fiber type contractions with the same fiber dimension.
\end{lem}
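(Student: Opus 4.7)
The plan is to combine the existence of a Gromov--Witten invariant with a point insertion associated to $R$ with an adaptation of Wi\'sniewski's deformation-of-nef-value argument \cite{DefNefValue}.

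Since $V\to S$ is of fiber type with $\dim S\leq 2$, a general fiber $F$ is a smooth Fano variety of dimension $d=4-\dim S\in\{2,3\}$. By \cite{SymRC} combined with the pushforward construction of Section~4.1, $F$ carries a non-zero Gromov--Witten invariant with point insertions, which induces on $V$ a non-zero invariant of the form
\[
\langle [pt],[A]^{d-1},\delta_1,\ldots\rangle^V_{0,\beta}\neq 0,
\]
where $\beta\in R$ is the class of a minimal free rational curve in $F$ and $A$ is a very ample class on $V$. By Lemma~\ref{lem:hodgenumber} sympdef preserves Hodge numbers and $h^{0,2}(V')=h^{0,2}(V)=0$, so $\rho(V')=b_2(V')=2$ via the Lefschetz $(1,1)$-theorem; by sympdef invariance of Gromov--Witten invariants and preservation of $c_1$, the transported invariant on $V'$ is also non-zero, the curves realizing it cover $V'$ in class $\beta'$, and $-K_{V'}\cdot\beta'=-K_V\cdot\beta>0$.

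To show $\beta'$ spans an extremal ray of $\overline{NE}(V')$, set $L=\pi^*H$ for $H$ ample on $S$: $L$ is a nontrivial nef class on $V$ with $L\cdot\beta=0$, and its image $L'\in\mathrm{Pic}(V')_\QQ$ under the Hodge-respecting identification of $H^2$ satisfies $L'\cdot\beta'=0$. I verify $L'$ is nef on $V'$ ray by ray: since $\rho(V')=2$ the cone $\overline{NE}(V')$ has at most two extremal rays; each $K_{V'}$-negative one is, by the cone theorem and the explicit classification of $4$-fold Mori contractions in Section~4, supported by an algebraic rational curve whose class transports back to an effective curve class on $V$ pairing nonnegatively with the nef $L$; any $K_{V'}$-nonnegative extremal ray lies in the boundary of the sympdef-transported K\"ahler cone, so again $L'\geq 0$ there. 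With $L'$ nef, nontrivial, $\rho(V')=2$ and $L'\cdot\beta'=0$, the ray $R'=L'^\perp\cap\overline{NE}(V')$ is extremal and contains $\beta'$; $K_{V'}$-negativity together with the cone theorem produces an extremal contraction $\varphi':V'\to S'$, of fiber type since $\beta'$ is a covering class.

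Finally, the equality $\dim S=\dim S'$ is sympdef invariant: the fiber dimension $\dim F=4-\dim S$ is characterized GW-theoretically as the maximum $k$ for which $\langle [pt]^k,[A]^{d-1},\ldots\rangle^V_{0,\beta}\neq 0$, and the analogous maximum on $V'$ coincides by sympdef invariance of all such invariants. The principal obstacle in the plan is precisely the nefness of $L'$ on $V'$: unlike Theorem~\ref{thm:ray}, we do not assume a common ample class, and nefness must be extracted from sympdef invariance of Gromov--Witten invariants together with the explicit classification of $4$-fold extremal contractions from Section~4 --- this is the technical heart of the argument.
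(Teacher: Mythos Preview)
Your overall strategy—transport $L=\pi^*H$ to $L'$ and show it is nef on $V'$—is reasonable, but the execution of the nefness step has two genuine gaps.

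First, your treatment of a $K_{V'}$-nonnegative extremal ray is not an argument. You assert that such a ray ``lies in the boundary of the sympdef-transported K\"ahler cone, so again $L'\geq 0$ there'', but the K\"ahler cone lives in $H^2$, not in $H_2$, and there is no a priori relation between a $K_{V'}$-nonnegative curve class and the transported K\"ahler cone of $V$. Nothing here forces $L'\cdot C\geq 0$.

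Second, and more seriously, Section~4 does \emph{not} produce a nonzero Gromov--Witten invariant for every $K_{V'}$-negative extremal ray. Theorem~\ref{thm:extremal} explicitly excludes certain divisor-to-point contractions (those whose exceptional divisor is not on the short list in case~(2)), and for these you cannot transport the ray back to an effective class on $V$. So your ``ray by ray'' verification of nefness is incomplete exactly at the case that matters.

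The paper handles precisely this obstruction, but by a different and purely topological route: it rules out \emph{any} divisor-to-point contraction on $V'$ via a Hard Lefschetz argument. If such a contraction existed with exceptional divisor $E$ and $H$ the pullback of an ample class on the target, then $H\cdot E=0$ makes the map $c_1(\,\cdot\,)^2:H^2(V',\QQ)\to H^6(V',\QQ)$ degenerate only along multiples of $H$ or $E$; but on $V$ the pullback $A$ of an ample class on $S$ has $A^3=0$ while $H^3>0$ and $E^4<0$, a contradiction. Once divisor-to-point contractions are excluded, every remaining $K_{V'}$-negative extremal ray is covered by Theorem~\ref{thm:extremal} and Corollary~\ref{cor:extremal}, hence lies in the effective cone of $V$; with $\rho=2$ on both sides this forces $V$ and $V'$ to share an extremal ray. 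The paper never needs to prove $L'$ is nef directly.

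Finally, the paper's determination of the fiber dimension is also simpler than yours: instead of characterizing $\dim F$ as a maximal number of point insertions in a GW invariant (which would require an additional argument that no invariant with more point insertions survives), it observes that the line $R^\perp\subset H^2$ is the same on both sides and reads off the fiber dimension from the vanishing order of self-intersections of a nonzero element in this line—a purely topological quantity.
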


Note that in this case $Pic(V) \otimes \QQ \cong H^2(V, \QQ) \cap H^{1,1}(V, \CC)$ and $Pic(V') \otimes \QQ \cong H^2(V', \QQ) \cap H^{1,1}(V', \CC)$. So the cone of effective curves naturally lies in $H_2(V, \QQ)$ and $H_2(V', \QQ)$.

\begin{proof}
By Corollary \ref{cor:extremal}, the ray $R$ stays effective under deformations of tamed almost complex structures. In particular, $R$ lies in the effective cone of curves of $V'$.

We first claim that $V'$ has no extremal contractions which contract a divisor to a point. Otherwise write the exceptional divisor as $E$ and the pull-back of an ample divisor on the target as $H$. Clearly $H \cdot E=0$. Given a line bundle $L$, consider the Lefschetz map:
\[
c_1(L)^2: H^2(V', \QQ) \to H^6(V', \QQ).
\]
The classes $C=H^3$ and $D=E^3$ span $H^6(V, \QQ)$. Write $L=xH+yE$ and in terms of this basis, the linear map can be written as:
\[
\left(
  \begin{array}{cc}
    x^2 & 0 \\
    0& y^2 \\
  \end{array}
\right)
\]
Thus the map is not an isomorphism if and only if $L$ is a multiple of $H$ or $E$. Note that this map is purely topological. Thus the same is true for $V$. But $V$ has a contraction to $B$. Let $A$ be the pull-back of an ample divisor on $S$. Then $c_1(A)^2$ is not an isomorphism since $A^3=0$. Thus the cohomology class $c_1(A)$ is a multiple of $c_1(H)$ or $c_1(E)$. But $c_1(A)^3=0$ and $c_1(H)^3 >0, c_1(E)^4<0$.

Therefore all the $K_{V'}$-negative extremal rays lie in the effective cone of $V$ under the natural identification of $H_2(V, \RR)\cong H_2(V', \RR)$ by Corollary \ref{cor:extremal}. Then $V$ and $V'$ has to share a common extremal ray.

To prove that the two contractions are both fiber type contractions, notice that if one of them is a fiber type contraction, then there is a non-zero Gromov-Witten invariant of the form $\langle [pt], \ldots \rangle_{0, \beta}$ for some $\beta$ in the extremal ray. Thus the contraction has to be of fiber type on the other one. Note that the pull back of an ample divisor from the base spanns the unique line in $H^2(V, \QQ)$ (and $H^2(V', \QQ)$) which has zero intersection number with $R$. Thus one can read off the fiber dimension by looking at the self-intersection numbers of a non-zero element in this line.
\end{proof}
\begin{rem}
The observation that Hard Lefshetz condition imposes strong constraints on the exceptional locus of the contraction is due to Wisniewski \cite{WisLef}.
\end{rem}
Now we are ready to prove that $Y$ is also rationally connected. If $Y$ is not rationally connected, then $Y$ has a unique $K_Y$-negative extremal ray (otherwise $Y$ is Fano and thus rationally connected). And the contraction $Y \to B$ is a fiber type contraction with fiber dimension $2$ by Lemma \ref{lem:finestructure}. By Lemma \ref{lem:extremalray}, there is also a contraction $X \to \Sigma$ which contracts the same extremal ray and has fiber dimension $2$. By Theorem 1.16 in \cite{SymGHS}, there is a curve class $\beta$, not in the extremal ray, such that $\langle [pt], \ldots \rangle^X_{0, \beta}\neq 0$. The same is true for $Y$. Thus $B$ is uniruled by the image of rational curves of class $\beta$, which is impossible since $B$ is the MRC quotient of $Y$ by Lemma \ref{lem:structure}.

In fact a little bit more is true.
\begin{thm}\label{thm:KollarConj}
Let $W$ and $W'$ be two smooth projective uniruled $4$-fold which are symplectic deformation equivalent and have second Betti number $2$. Then the MRC fibration of $W$ and $W'$ has the same dimension and the class of a general fiber has the same cohomology class.
\end{thm}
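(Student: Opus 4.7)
The plan is to combine the argument preceding the statement (which handled the case where $W$ is rationally connected) with the MMP structural analysis of Lemma~\ref{lem:structure} and the extremal-ray transfer of Lemma~\ref{lem:extremalray}.

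If $W$ is rationally connected, the preceding argument shows that $W'$ is rationally connected as well, and both MRC fibrations are trivial. So assume henceforth that neither $W$ nor $W'$ is rationally connected. By Lemma~\ref{lem:structure}, the MRC fibration of each is the terminal fiber type contraction of the MMP, with base of dimension $1$, $2$, or $3$, and flips occur only when the base is $3$-dimensional.

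When the base $B$ of the MRC fibration of $W$ has dimension $2$, no flips occur, so $W \to B$ is itself an extremal contraction of $W$. Lemma~\ref{lem:extremalray} then produces a matching fiber type extremal contraction $W' \to S'$ of fiber dimension $2$ with the same class in $H_2$; the base $S'$ must be non-uniruled (else $W'$ would be rationally connected by Graber-Harris-Starr,~\cite{SymRC}), hence is the MRC quotient of $W'$. The common cohomology class of a general fiber is recovered as $c_1(H)^2$ (up to scalar) for $H$ the pullback of an ample divisor from the base, and $H$ is topologically determined as a generator of the line in $H^2$ orthogonal to the common extremal ray. The case $\dim B = 1$ is handled analogously: such $B$ must be a curve of positive genus (else $W$ would be rationally connected, again by~\cite{SymRC}), and using the symplectic deformation invariance of $b_1$ together with the Albanese map, the MRC fibration of $W'$ also factors through a curve of the same genus, with matching fiber class.

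The main difficulty is the case $\dim B = 3$, where the MRC fibration is realized only after a sequence of Kawamata flips $W = W_0 \dashrightarrow \cdots \dashrightarrow W_n$ followed by a fiber type contraction $W_n \to B$. Since the flipping loci are disjoint copies of $\PP^2$ with normal bundle $\OO(-1)^{\oplus 2}$ (of codimension two), a general MRC fiber avoids them and its cohomology class is canonically identified in $H_2(W, \QQ) = H_2(W_n, \QQ)$. I would adapt the cohomological Hard-Lefschetz argument of Lemma~\ref{lem:extremalray} (in the spirit of~\cite{WisLef}): the pullback class $H \in H^2(W, \QQ) \cong H^2(W', \QQ)$ from the $3$-fold base is characterized by $H^4 = 0$ and $H^3 \neq 0$, a purely topological condition that must be satisfied on $W'$ as well. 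Combined with the non-vanishing Gromov-Witten invariant provided by Theorem~1.16 of~\cite{SymGHS} applied to $W_n \to B$, this should force $W'$ to admit a fiber type contraction (on itself or on an MMP successor) with the same fiber class in $H_2$. The most delicate step will be to verify that this matching contraction is, in fact, the MRC fibration of $W'$ (with $3$-dimensional base), rather than an extremal contraction whose image is itself uniruled.
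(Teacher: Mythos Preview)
Your case analysis via Lemma~\ref{lem:extremalray} is reasonable when $\dim B\le 2$ (and there is no need for the separate Albanese argument when $\dim B=1$: Lemma~\ref{lem:extremalray} already applies). However, the $\dim B=3$ case has a genuine gap. The class $H$ you want to characterize lives on $W_n$, and while flips identify $H^2(W,\QQ)\cong H^2(W_n,\QQ)$, they do \emph{not} preserve the quartic self-intersection form on $H^2$: a direct computation with the blow-up resolving the flip shows that $H^4$ on $W$ and on $W_n$ differ by a positive multiple of $(H\cdot\ell)^4$ for $\ell$ a line in the flipped $\PP^2$. Since the flipped $\PP^1$ in $W_n$ is not contracted by $W_n\to B$, this correction is nonzero, so the condition ``$H^4=0$, $H^3\neq 0$'' fails on $W$ itself. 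Likewise, Theorem~1.16 of \cite{SymGHS} applied to $W_n\to B$ produces a Gromov--Witten invariant on $W_n$, which is not a symplectic invariant of $W$; you would need to transport it across the flips, and there is no general mechanism for that.

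The paper's proof avoids the case split and the MMP on $W'$ entirely by a different device. In every case there is a rational map $W\dashrightarrow S$ which is proper on a Zariski open set with general fiber a smooth Fano $F$ of dimension at most $3$. By \cite{SymRC} there is a nonzero invariant $\langle[pt],[pt],\gamma_1,\ldots,\gamma_n\rangle^F_{0,\beta}$ with the $\gamma_i$ cut out by very ample divisors, and moreover every curve of that $-K_F$ degree meeting general such constraints is irreducible. Choosing $\Gamma_i\in H^*(W,\QQ)$ with $\Gamma_i|_F=\gamma_i$, the paper considers the linear functional
\[
\alpha\ \longmapsto\ \langle [pt],\alpha,\Gamma_1,\ldots,\Gamma_n\rangle^W_{0,\,i_*\beta}
\]
on $H^*(W,\QQ)$. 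This is a Gromov--Witten invariant of $W$ itself, hence symplectic-deformation invariant, and it is shown to equal $\alpha\mapsto c\cdot(\alpha\cdot[F])$ with $c>0$ a sum of fiber invariants. The cohomology class of the general fiber is thus read off directly from this functional, uniformly in all cases and without ever running the MMP on $W'$ or transferring extremal rays. This is the missing idea in your $\dim B=3$ case.
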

\begin{proof}
If one of them is rationally connected, then the theorem follows from the above discussion. Otherwise either both $W$ and $W'$ has a contraction which realizes the MRC quotient or the MRC quotient is a $3$-fold. In any case, there is a rational map $W \dashrightarrow S$ which is a proper morphism when restricted to a Zariski open subset and a general fiber is a smooth Fano variety $F$. To see the general fiber has the same dimension and cohomology class, we use the fact that for any Fano manifold $F$ of dimension at most $3$, there is a non-zero Gromov-Witten invariant of the form
\[
\langle [pt], [pt], \gamma_1, \ldots, \gamma_n \rangle^F_{0, \beta}.
\]
Furthermore, one can choose the classes $\gamma_i$ to be intersections of very ample divisors. In dimension $1$ and $2$, this is easy to see by classification. In dimension $3$, this is proved in \cite{SymRC}, Theorem 4.1 and Corollary 5.11. It follows from the proof in dimension $3$ (and easy to check in dimension $1$ and $2$) that any curve with the same $-K_F$ degree that can meet a general choice of the constraints are irreducible.

Choose cohomology classes $\Gamma_1, \ldots, \Gamma_n$ in $H^*(W, \QQ)$ such that $\Gamma_i \vert_F=\gamma_i$ for all $i$.
Now consider the linear map:
\[
H^*(W, \QQ) \to \QQ
\]
\[
 \alpha \mapsto \langle [pt], \alpha, \Gamma_1,\ldots, \Gamma_n \rangle^W_{0, i_*\beta}\\
\]
Clearly this map is invariant under symplectic deformations. But on the other hand, this is the same as the linear map given by
\[
\alpha \mapsto (\alpha \cdot [F]) (\sum_{\delta, i_*(\delta)=i_*(\beta)}\langle [pt], [pt], \gamma_1, \ldots, \gamma_n \rangle^F_{0, \delta}).
\]
Note that the summation term is positive and only depends on $F$. Thus one can read the information of the cohomolgy class of a general fiber from this linear map.
\end{proof}

\section{Proof of Theorem \ref{thm:ray}}
Let $R$ be an extremal ray of $Z$. If the contraction is a fiber type contraction, then there is a non-zero Gromov-Witten invariant of the form $\langle [pt], \ldots, \rangle^Z_{0, \beta}$ for some curve class $\beta$ in $R$. So from now on assume the contraction is birational. Let $C$ be a rational curve whose curve class $[C]$ is the minimal class in $R$ which can be represented by a rational curve. Finally, let $Z \to Y$ be the contraction and $L$ the pull back of a ample divisor on $Y$. Consider the linear maps:
\[
L^k : H^{n-k}(Z, \QQ) \to H^{n+k}(Z, \QQ)
\]
\[
\alpha \mapsto c_1(L)^k \cup \alpha,
\]
where $n=\dim Z$ and $1\leq k \leq n-2$.

The following lemma essentially comes from \cite{DefNefValue}, \cite{WisLef}.
\begin{lem}\label{lem:HL}
If all the linear maps $L^k (1 \leq k \leq n-2)$ are isomorphisms, then there is a non-zero Gromov-Witten invariant of the form $\langle \ldots \rangle^Z_{0, [C]}$.
\end{lem}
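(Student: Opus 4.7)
The plan is to proceed by case analysis on the birational contraction $\phi: Z \to Y$ associated to $L$, using the Hard Lefschetz hypothesis to control the dimension of the image of the exceptional locus in the divisorial case. Since $\phi$ is $K_Z$-negative, it is either small or contracts a unique divisor $E$ onto an irreducible subvariety $S = \phi(E) \subset Y$ of some dimension $d$.

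If $\phi$ is small, then Theorem 1.1 of \cite{KawamataSmallContraction}, already recalled in the flip subsection above, identifies the exceptional locus as a disjoint union of copies of $\PP^2$ with normal bundle $\OO(-1) \oplus \OO(-1)$. The minimal class $[C]$ in $R$ is the class of a line in such a $\PP^2$, and the invariant $\langle [E], [E] \rangle^Z_{0, [C]}$ equals the number of irreducible components, which is positive. Hard Lefschetz is not actually used in this subcase.

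If $\phi$ is divisorial, the key step is to show that Hard Lefschetz forces $d = n - 2$. The bound $d \leq n - 2$ is automatic because $\phi|_E$ contracts a divisor. For the reverse, observe that $L|_E = \phi^*(L_Y|_S)$ factors through the $d$-dimensional $S$, so $(L|_E)^k = 0$ in $H^{2k}(E, \QQ)$ whenever $k > d$. By the projection formula $L^k \cup [E] = \iota_\ast\bigl((L|_E)^k\bigr)$ in $H^{2k+2}(Z, \QQ)$, where $\iota: E \hookrightarrow Z$. Taking $k = n-2$, if $d < n - 2$ then $L^{n-2} \cup [E] = 0$, contradicting the hypothesis that $L^{n-2}: H^{2}(Z, \QQ) \to H^{2n-2}(Z, \QQ)$ is an isomorphism combined with $[E] \neq 0$. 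Hence $d = n - 2 = 2$, so $E$ is contracted onto a surface. By Ando's classification (cited in \cite{AWContraction2}) a general fiber $F$ of $\phi|_E$ is a $\PP^1$ with normal bundle $\OO \oplus \OO \oplus \OO(-1)$, and the divisor-to-surface computation in the preceding section yields $\langle [F] \rangle^Z_{0, [F]} = F \cdot E = -1$. Since any rational curve in $R$ lies in a fiber of $\phi$, the minimal extremal class $[C]$ equals $[F]$, which gives the required non-zero invariant.

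The main obstacle is the push-pull identity combined with Hard Lefschetz used to exclude $d < n - 2$; once that step is in place, the rest reduces to invoking the classification results and Gromov--Witten computations already collected earlier in the paper, together with the matching of the minimal extremal class $[C]$ with the line or fiber class in each subcase.
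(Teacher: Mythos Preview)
Your argument has a genuine gap: it only treats the case $n = \dim Z = 4$, but the lemma is stated and used (in the proof of Theorem~\ref{thm:ray}) for arbitrary dimension. You invoke Kawamata's classification of small contractions and Ando's description of divisor-to-surface contractions, both of which are specific to smooth $4$-folds; in the divisorial case you even write ``$d = n-2 = 2$''. In higher dimensions there is no comparable structural classification of small contractions from smooth varieties, nor a ready-made Gromov--Witten computation for a divisorial contraction onto an $(n-2)$-fold, so your case analysis does not close.

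The paper's proof avoids classification altogether. It works with an irreducible component $\overline{\mathcal{M}}$ of $\overline{\mathcal{M}}_{0,2}(Z,[C])$, uses bend-and-break together with the minimality of $[C]$ to see that the evaluation map to $Z\times Z$ is generically finite, and lets $E$ denote the locus swept out by these curves (an irreducible piece of the exceptional locus, not assumed to be a divisor). With $d = \dim \phi(E)$ and $f$ the general fiber dimension of $E \to \phi(E)$, one gets $d+2f \geq \dim \overline{\mathcal{M}} \geq n$ while $d+f \leq n-1$. Your Hard Lefschetz step --- that $L^{d+1}\cup[E]=0$ forces $d \geq n-2$ --- is precisely the mechanism the paper exploits here, but applied to this swept locus rather than to a presupposed exceptional divisor; the upshot is $d+2f = n$, so every component of the moduli space has the expected dimension and the invariants (for instance $\langle [H]^2,\ldots,[H]^2\rangle^Z_{0,[C]}$ with $H$ very ample) are enumerative, hence non-zero. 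This is dimension-independent. If you restrict to $n=4$ your route is essentially correct and more explicit, but it proves strictly less than what the lemma asserts.
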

\begin{proof}

 Let $\overline{\mathcal{M}}$ be an irreducible component of $\overline{\mathcal{M}}_{0, 2}(Z, [C])$.
Consider the evaluation map:
 \[
 ev: \overline{\mathcal{M}} \to Z \times Z
 \]
  Then the fiber over a point not in the diagonal of $Z \times Z$ is finite. Otherwise one can deform the curve with two points fixed and by bend-and-break, the curve will degenerate to a reducible curve or a multiple cover of some curve, which is impossible by the minimality of $[C]$. Let $E\subset X$ be the image of the above map composed with the projection onto the first factor, thus an irreducible subvariety of the exceptional locus of $Z \to Y$.  Let $d$ (resp. $f$) be the dimension of the image of $E$ in $Y$ (resp. a general fiber). Clearly $d+f \leq n-1$. And the image of the evaluation map is at most $d+2f$.
 On the other hand, we have
 \[
 \dim \overline{\mathcal{M}} \geq -K_Z \cdot C+n-3+2 \geq n.
 \]
  Since the evaluation map is generically finite, $d+2f \geq n$. Then $d+f \geq n/2$ and equality holds if and only if $d=0$ and $2f=n$, in particular $d+2f=n$.

  Now assume equality does not hold. Notice that $c_1(L)^{d+1} \cup [E]=0$, and $[E] \in H^{n-(2d+2f-n)}(Z, \QQ)$. Note that $1 \leq d+1 \leq 2d+2f-n \leq n-2$ since $d+f \leq n-1$. Thus by assumption, $L^{2d+2f-n}$ is injective. Then $d\geq n-2$ otherwise $L^{d+1}$ is injective, a contradiction to the fact that $c_1(L)^{d+1} \cdot [E]=0$. But $d+f\leq n-1$ and $f \geq 1$. So $d=n-2, f=1$ and $d+2f=n$.

  In any case, we have the equality $d+2f=n$ and every irreducible component of $\overline{\mathcal{M}}_{0, 2}(X, [C])$ has the expected dimension $-K_Z \cdot C+n-1=n$. So is every irreducible component of $\overline{\mathcal{M}}_{0, m}(Z, [C])$. Thus the Gromov-Witten invariants are enumerative in the sense that they count the number of curves in class $[C]$ satisfying certain incidence constraints. In particular, some Gromov-Witten invariant (e.g. $\langle [H]^2, \ldots, [H]^2 \rangle^Z_{0, [C]}$, where $H$ is a very ample divisor) is non-zero.
\end{proof}

By the above lemma, we may assume some $L^k$ is not an isomorphism. But if the extremal ray $R$ is not effective, then the effective cone lies on one of the half spaces $\{\alpha \in H_2(X, \QQ) \vert \alpha \cdot L >0\}$ or  $\{\alpha \in H_2(X, \QQ) \vert \alpha \cdot L <0\}$. Here we need the assumption that there is an ample class $A \in H^2(Z, \QQ)=H^2(Z', \QQ)$. So either $L$ or $-L$ is an ample class in $H^2(Z', \QQ)$. Thus $L^k$ is an isomorphism for all $1 \leq k \leq n-2$ by the Hard Lefschetz Theorem. Therefore all extremal rays stay effective under symplectic deformations of algebraic varieties with second Betti number $2$. It is easy to see that at least one of the extremal rays is still extremal.

\bibliographystyle{alpha}
\bibliography{MyBib}

\end{document}